\numberwithin{equation}{subsection}
\newcommand{\sqsp}{\renewcommand{\baselinestretch}{1.15}\tiny\normalsize}
\newtheorem{theorem}[subsection]{Theorem}
\newtheorem{lemma}[subsection]{Lemma}
\newtheorem{proposition}[subsection]{Proposition}
\newtheorem{corollary}[subsection]{Corollary}
\theoremstyle{definition}
\newtheorem{definition}[subsection]{Definition}
\newtheorem{example}[subsection]{Example}
\newtheorem{remark}[subsection]{Remark}
\newcommand{\bk}{\mathbf{k}}
\newcommand{\bQ}{\mathbb{Q}}
\newcommand{\Qbar}{\overline{\mathbb{Q}}}
\newcommand{\muop}{\mu^{op}}
\newcommand{\mustarop}{\mu^{*op}}
\newcommand{\mun}{\mu^{(n)}}
\newcommand{\muone}{\mu^{(1)}}
\newcommand{\mualpha}{\mu_\alpha}
\newcommand{\Deltaop}{\Delta^{op}}
\newcommand{\Deltan}{\Delta^{(n)}}
\newcommand{\Deltaone}{\Delta^{(1)}}
\newcommand{\Deltaoneop}{\Delta^{(1)op}}
\newcommand{\Deltaalpha}{\Delta_\alpha}
\newcommand{\Deltaalphaop}{\Delta_\alpha^{op}}
\DeclareMathOperator{\Hom}{Hom}
\DeclareMathOperator{\ad}{ad}
\DeclareMathOperator{\Gal}{Gal}
\newcommand{\nicearrow}{\SelectTips{cm}{10}}
\newcommand{\vzero}{{\begin{bmatrix}0 & 0 \\ 0 & 1\end{bmatrix}}}
\newcommand{\vone}{{\begin{bmatrix}1 & 0 \\ 0 & 0\end{bmatrix}}}
\newcommand{\phione}{{\begin{bmatrix}0 & 0 \\ (1,0) & 0\end{bmatrix}}}
\newcommand{\phitwo}{{\begin{bmatrix}0 & 0 \\ (0,1) & 0\end{bmatrix}}}
\newcommand{\krmatrix}{{\begin{bmatrix}a & 0 \\ (c,d) & b\end{bmatrix}}}
\newcommand{\kralphat}{{\begin{bmatrix}a & 0 \\ (c+d,0) & b\end{bmatrix}}}
\newcommand{\kralphab}{{\begin{bmatrix}a & 0 \\ (0,c+d) & b\end{bmatrix}}}
\newcommand{\kralphap}{{\begin{bmatrix}a & 0 \\ (d,c) & b\end{bmatrix}}}
\newcommand{\eoneone}{{\begin{bmatrix}1 & 0 \\ 0 & 0\end{bmatrix}}}
\newcommand{\eonetwo}{{\begin{bmatrix}0 & 1 \\ 0 & 0\end{bmatrix}}}
\newcommand{\triangular}{{\begin{bmatrix}\bk & 0 & 0\\ \bk & \bk & 0\\ \bk & 0 & \bk\end{bmatrix}}}
\newcommand{\qmatrix}{{\begin{bmatrix}a & 0 & 0\\ d & b & 0\\ e & 0 & c\end{bmatrix}}}
\newcommand{\qmatrixalpha}{{\begin{bmatrix}a & 0 & 0\\ e & c & 0\\ d & 0 & b\end{bmatrix}}}
\newcommand{\qmatrixxy}{{\begin{bmatrix}0 & 0 & 0\\ 0 & d & 0\\ 0 & 0 & e\end{bmatrix}}}
\newcommand{\qmatrixyx}{{\begin{bmatrix}0 & 0 & 0\\ 0 & e & 0\\ 0 & 0 & d\end{bmatrix}}}
\newcommand{\eone}{{\begin{bmatrix}1 & 0 & 0 \\ 0 & 0 & 0\\ 0 & 0 & 0\end{bmatrix}}}
\begin{document}

\title{Infinitesimal Hom-bialgebras and Hom-Lie bialgebras}
\author{Donald Yau}

\begin{abstract}
We study the Hom-type generalization of infinitesimal bialgebras, called infinitesimal Hom-bialgebras.  In particular, we consider infinitesimal Hom-bialgebras arising from quivers, the sub-classes of coboundary and quasi-triangular infinitesimal Hom-bialgebras, the associative Hom-Yang-Baxter equation, and homological perturbation of the comultiplications in infinitesimal Hom-bialgebras.  The relationships between infinitesimal Hom-bialgebras, Hom-Lie bialgebras, and the classical Hom-Yang-Baxter equation are also studied.
\end{abstract}

\keywords{Infinitesimal Hom-bialgebra, Hom-Lie bialgebra, Hom-Yang-Baxter equation.}

\subjclass[2000]{16T10, 16T25, 17A30, 17B62}

\address{Department of Mathematics\\
    The Ohio State University at Newark\\
    1179 University Drive\\
    Newark, OH 43055, USA}
\email{dyau@math.ohio-state.edu}

\date{\today}
\maketitle

\sqsp

\section{Introduction}


An infinitesimal bialgebra $(A,\mu,\Delta)$, often abbreviated to $\epsilon$-bialgebra, is simultaneously an associative algebra $(A,\mu)$ and a coassociative coalgebra $(A,\Delta)$, in which the comultiplication $\Delta$ is a $1$-cocycle in Hochschild cohomology (i.e., a derivation) with coefficients in $A^{\otimes 2}$.  Infinitesimal bialgebras were introduced by Joni and Rota \cite{jr}, called infinitesimal coalgebras there, in the context of the calculus of divided differences.  In combinatorics, they are further studied in \cite{aguiar3,er,foissy,hr}, among others.


Many properties and examples of $\epsilon$-bialgebras were established by Aguiar \cite{aguiar,aguiar2,aguiar4}.  In particular, coboundary and quasi-triangular $\epsilon$-bialgebras, infinitesimal Hopf algebras, and the notion of a Drinfel'd double for a finite dimensional $\epsilon$-bialgebra were developed in those papers.  Moreover, it was established in \cite{aguiar} that the path algebra of an arbitrary quiver has the richer structure of an $\epsilon$-bialgebra.  The associative Yang-Baxter equation (AYBE), which is closely related to coboundary and quasi-triangular $\epsilon$-bialgebras, was introduced in \cite{aguiar} and was further studied in \cite{aguiar2,pol,schedler}.

Infinitesimal bialgebras are closely related to Drinfel'd's Lie bialgebras \cite{dri83,dri87}.  A Lie bialgebra is simultaneously a Lie algebra and a Lie coalgebra, in which the cobracket is a $1$-cocycle in Chevalley-Eilenberg cohomology.  Thus, the cocycle condition in an $\epsilon$-bialgebra can be seen as an associative analog of the cocycle condition in a Lie bialgebra.  In fact, in \cite{aguiar2} necessary and sufficient conditions were found so that an $\epsilon$-bialgebra gives rise to a Lie bialgebra via the (co)commutator (co)bracket.  Moreover, it was shown in \cite{aguiar2} that, under suitable conditions, solutions of the AYBE give rise to solutions of the classical Yang-Baxter equation (CYBE) \cite{skl1,skl2}.


The purpose of this paper is to study the Hom-type generalization of $\epsilon$-bialgebras, called infinitesimal Hom-bialgebras, often abbreviated to $\epsilon$-Hom-bialgebras.  Hom-type algebras are usually defined by twisting the defining axioms of a type of algebras by a certain twisting map.  When the twisting map happens to be the identity map, one gets an ordinary algebraic structure.  Hom-Lie algebras were introduced in \cite{hls} to describe the structures on certain deformations of the Witt and the Virasoro algebras.  Corresponding to Hom-Lie algebras are Hom-associative algebras \cite{ms}, which give rise to Hom-Lie algebras via the commutator bracket.  Conversely, there is an enveloping Hom-bialgebra associated to each Hom-Lie algebra \cite{yau,yau3}.  Further recent studies of Hom-type structures include \cite{ama,arms,ams,cg}, \cite{fg1} - \cite{gohr}, \cite{mak}, \cite{ms2} - \cite{ms4}, and \cite{yau} - \cite{yau11}, among others.


There are two conceptual ways to think about $\epsilon$-Hom-bialgebras.  On the one hand, an $\epsilon$-Hom-bialgebra is a non-(co)associative analog of an $\epsilon$-bialgebra, in which the non-(co)associativity is controlled by a twisting map $\alpha$.  The cocycle condition in an $\epsilon$-bialgebra is also replaced by a suitable cocycle condition in Hom-algebra cohomology.  On the other hand, as we will show below, $\epsilon$-Hom-bialgebras are the Hom-associative analogs of the author's Hom-Lie bialgebras \cite{yau8}.  Moreover, we will define a Hom-type analog of the AYBE, called the associative Hom-Yang-Baxter equation (AHYBE), which is related to the classical Hom-Yang-Baxter equation (CHYBE) \cite{yau8} as the AYBE is related to the CYBE.

A description of the rest of this paper follows.

In section \ref{sec:infinitesimal} we define $\epsilon$-Hom-bialgebras and prove some of their basic properties, including two general construction results called the Twisting Principles.  Briefly, an $\epsilon$-Hom-bialgebra $(A,\mu,\Delta,\alpha)$ consists of a Hom-associative algebra $(A,\mu,\alpha)$ and a Hom-coassociative coalgebra $(A,\Delta,\alpha)$ such that $\Delta$ is a $1$-cocycle in Hom-algebra cohomology.  An $\epsilon$-Hom-bialgebra with $\alpha = Id$ is exactly an $\epsilon$-bialgebra.  The First Twisting Principle (Theorem \ref{thm:firsttp}) says that an $\epsilon$-bialgebra $A$ and a morphism $\alpha \colon A \to A$ of $\epsilon$-bialgebras give rise to an $\epsilon$-Hom-bialgebra $A_\alpha$ with twisted (co)multiplication.  This result allows us to construct multiple $\epsilon$-Hom-bialgebras from a given $\epsilon$-bialgebra.  A twisting result of this form first appeared in \cite{yau2} (Theorem 2.3).  That result and its generalizations have been employed by various authors; see, for example, \cite{ama} (Theorem 2.7), \cite{ams} (Theorems 1.7 and 2.6), \cite{fg2} (Section 2), \cite{gohr} (Proposition 1), \cite{mak} (Theorems 2.1 and 3.5), \cite{ms4} (Theorem 3.15 and Proposition 3.30), and \cite{yau3} - \cite{yau11}.

As illustrations of the First Twisting Principle, we observe that every quiver $Q$ and a suitably defined quiver morphism $\alpha \colon Q \to Q$ give rise to an $\epsilon$-Hom-bialgebra structure on the space $\bk Q$ of its path algebra (Corollary \ref{cor:quiver}).  For instance, starting from the Kronecker quiver, we construct three associated Kronecker $\epsilon$-Hom-bialgebras (Example \ref{ex:kronecker}).  The Second Twisting Principle (Theorem \ref{thm:secondtp}) says that every $\epsilon$-Hom-bialgebra gives rise to a derived sequence of $\epsilon$-Hom-bialgebras with suitably twisted structure maps.  In Example \ref{ex:kronecker} we also observe that the Kronecker $\epsilon$-bialgebra ($=$ the path $\epsilon$-bialgebra of the Kronecker quiver) can be recovered from one of the Kronecker $\epsilon$-Hom-bialgebras using the Second Twisting Principle.

In section \ref{sec:cobound} we study the sub-class of coboundary $\epsilon$-Hom-bialgebras.  They are the Hom-type analogs of Aguiar's coboundary $\epsilon$-bialgebras \cite{aguiar,aguiar2}.  A coboundary $\epsilon$-Hom-bialgebra is an $\epsilon$-Hom-bialgebra in which the comultiplication is a principal derivation $[-,r]_*$ (in the Hom-algebra sense) for some $\alpha$-invariant $2$-tensor $r$.  We prove the two Twisting Principles for coboundary $\epsilon$-Hom-bialgebras (Theorems \ref{thm:coboundtp} and \ref{thm:coboundtp2}).  In Theorem \ref{thm:coboundchar}, given a Hom-associative algebra $(A,\mu,\alpha)$ and an $\alpha$-invariant $2$-tensor $r$, we show that the comultiplication $\Delta = [-,r]_*$ gives a coboundary $\epsilon$-Hom-bialgebra $(A,\mu,\Delta,\alpha)$ if and only if a certain element $A(r) \in A^{\otimes 3}$ is $A$-invariant in a suitable sense.

In section \ref{sec:pertubation} we study perturbation of the comultiplications in $\epsilon$-Hom-bialgebras.   Given an $\epsilon$-Hom-bialgebra $(A,\mu,\Delta,\alpha)$ and an $\alpha$-invariant $2$-tensor $r$, we consider the perturbed comultiplication $\Delta' = \Delta + [-,r]_*$.  From the point-of-view of homological algebra, $\Delta'$ is the perturbation of the cocycle $\Delta$ by the coboundary $[-,r]_*$.  In Theorem \ref{thm:perturb} we give a necessary and sufficient condition under which $(A,\mu,\Delta',\alpha)$ becomes an $\epsilon$-Hom-bialgebra.  This condition also involves the element $A(r) \in A^{\otimes 3}$.  Our perturbation theory of $\epsilon$-Hom-bialgebras is similar to Drinfel'd's perturbation theory of quasi-Hopf algebras  \cite{dri83b,dri89b,dri90,dri91,dri92}.  This perturbation question does not seem to have been studied before even for $\epsilon$-bialgebras.  In particular, if we restrict to the special case $\alpha = Id$ in Theorem \ref{thm:perturb}, we obtain a necessary and sufficient condition under which an $\epsilon$-bialgebra gives rise to another $\epsilon$-bialgebra with perturbed comultiplication.

In section \ref{sec:ahybe} we study the associative Hom-Yang-Baxter equation (AHYBE) and quasi-triangular $\epsilon$-Hom-bialgebras.  The former is defined as the equation $A(r) = 0$ (in a Hom-associative algebra $(A,\mu,\alpha)$), which reduces to Aguiar's AYBE when $\alpha = Id$.  We prove the two Twisting Principles for solutions of the AHYBE (Theorems \ref{thm:ahybe1} and \ref{thm:ahybe2}).  A quasi-triangular $\epsilon$-Hom-bialgebra is defined as a coboundary $\epsilon$-Hom-bialgebra in which $r$ is a solution of the AHYBE.  Again, this is the Hom-type analog of Aguiar's quasi-triangular $\epsilon$-bialgebras \cite{aguiar,aguiar2}.  There are Twisting Principles for quasi-triangular $\epsilon$-Hom-bialgebras (Corollaries \ref{cor:qt1} and \ref{cor:qt2}).  In Theorem \ref{thm:cobahybe} we give several equivalent characterizations of the AHYBE in a coboundary $\epsilon$-Hom-bialgebra.

In section \ref{sec:homlie} we study how $\epsilon$-Hom-bialgebras give rise to Hom-Lie bialgebras, generalizing some of the results in \cite{aguiar2}.  A Hom-Lie bialgebra is simultaneously a Hom-Lie algebra and a Hom-Lie coalgebra, in which the cobracket is a $1$-cocycle in Hom-Lie algebra cohomology.  Hom-Lie bialgebras were first introduced and studied by the author in \cite{yau8} as the Hom-type analogs of Drinfel'd's Lie bialgebras \cite{dri83,dri87}.  As in the case of associative algebras, given a Hom-associative algebra, its commutator algebra is a Hom-Lie algebra \cite{ms}.  The dual result for Hom-coassociative coalgebra and Hom-Lie coalgebra also holds.  Therefore, it is natural to ask whether an $\epsilon$-Hom-bialgebra gives rise to a Hom-Lie bialgebra with its (co)commutator (co)bracket.  The answer to this question turns out to depend on a certain map on the tensor-square.

Given any $\epsilon$-Hom-bialgebra $A$, there is a map $B \colon A^{\otimes 2} \to A^{\otimes 2}$ called the \emph{balanceator}, which is the Hom-type analog of a map of the same name for an $\epsilon$-bialgebra \cite{aguiar2}.  In Corollary \ref{cor:homliebi} we establish that an $\epsilon$-Hom-bialgebra $A$ gives rise to a Hom-Lie bialgebra with its commutator bracket and cocommutator cobracket precisely when its balanceator is symmetric.  Then we prove the two Twisting Principles for $\epsilon$-Hom-bialgebras with symmetric balanceators (Theorems \ref{thm:Btp} and \ref{thm:Btp2}).  If the $\epsilon$-Hom-bialgebra $A$ in question is coboundary with $\Delta = [-,r]_*$ and $r$ anti-symmetric, then its balanceator is trivial and hence symmetric (Lemma \ref{lem1:coboundhomlie}).  In this case, we obtain a coboundary Hom-Lie bialgebra using the (co)commutator (co)bracket (Theorem \ref{thm:coboundhomlie}).

In section \ref{sec:qt} we study how solutions of the AHYBE and quasi-triangular $\epsilon$-Hom-bialgebras give rise to solutions of the classical Hom-Yang-Baxter equation (CHYBE) and quasi-triangular Hom-Lie bialgebras, respectively.  The CHYBE was introduced by the author in \cite{yau8} as the Hom-type analog of the CYBE.  In Theorem \ref{thm:chybe} we show that a solution $r$ of the AHYBE in a Hom-associative algebra is also a solution of the CHYBE in the commutator Hom-Lie algebra, provided that $r$ is either symmetric or anti-symmetric.  Using this result and Theorem \ref{thm:coboundhomlie}, we infer that a quasi-triangular $\epsilon$-Hom-bialgebra with $r$ anti-symmetric gives rise to a quasi-triangular Hom-Lie bialgebra with its (co)commutator (co)bracket (Corollary \ref{cor:qthomlie}).

\section{Infinitesimal Hom-bialgebras}
\label{sec:infinitesimal}

In this section, we define infinitesimal Hom-bialgebras, prove some construction results, and consider some concrete examples.  Infinitesimal Hom-bialgebras arising from quivers are considered near the end of this section.

\subsection{Conventions}
Throughout the rest of this paper, we work over a fixed field $\bk$ of characteristic $0$.  Vector spaces, tensor products, linearity, and $\Hom$ are all meant over $\bk$. If $f \colon V \to V$ is a linear self-map on a vector space $V$, then $f^n \colon V \to V$ denotes the composition $f \circ \cdots \circ f$ of $n$ copies of $f$, with $f^0 = Id$.  For a map $\Delta \colon V \to V^{\otimes 2}$, we use Sweedler's notation \cite{sweedler} $\Delta(a) = \sum_{(a)} a_1 \otimes a_2$.  For an element $r = \sum_i u_i \otimes v_i \in V^{\otimes 2}$, the summation sign will often be omitted in computations to simplify the typography.  For a map $\mu \colon V^{\otimes 2} \to V$, we often write $\mu(a,b)$ as $ab$ for $a,b \in V$.  If $W$ is another vector space, then $\tau \colon V \otimes W \cong W \otimes V$ denotes the twist isomorphism, $\tau(v \otimes w) = w \otimes v$.


\begin{definition}
\label{def:homas}
\begin{enumerate}
\item
A \textbf{Hom-associative algebra} \cite{ms} $(A,\mu,\alpha)$ consists of a  $\bk$-module $A$, a bilinear map $\mu \colon A^{\otimes 2} \to A$ (the multiplication), and a linear self-map $\alpha \colon A \to A$ (the twisting map) such that:
\begin{equation}
\label{homassaxioms}
\begin{split}
\alpha \circ \mu &= \mu \circ \alpha^{\otimes 2} \quad \text{(multiplicativity)},\\
\mu \circ (\alpha \otimes \mu) &= \mu \circ (\mu \otimes \alpha) \quad \text{(Hom-associativity)}.
\end{split}
\end{equation}
A morphism of Hom-associative algebras is a linear map of the underlying $\bk$-modules that commutes with the twisting maps and the multiplications.
\item
A \textbf{Hom-coassociative coalgebra} \cite{ms2,ms4} $(C,\Delta,\alpha)$ consists of a $\bk$-module $C$, a linear map $\Delta \colon C \to C^{\otimes 2}$ (the comultiplication), and a linear self-map $\alpha \colon C \to C$ (the twisting map) such that:
\begin{equation}
\label{homcoassaxioms}
\begin{split}
\alpha^{\otimes 2} \circ \Delta &= \Delta \circ \alpha \quad \text{(comultiplicativity)},\\
(\alpha \otimes \Delta) \circ \Delta &= (\Delta \otimes \alpha) \circ \Delta \quad \text{(Hom-coassociativity)}.
\end{split}
\end{equation}
A morphism of Hom-coassociative coalgebras is a linear map of the underlying $\bk$-modules that commutes with the twisting maps and the comultiplications.
\item
An \textbf{infinitesimal Hom-bialgebra} (often abbreviated to \textbf{$\epsilon$-Hom-bialgebra}) is a quadruple $(A,\mu,\Delta,\alpha)$, in which $(A,\mu,\alpha)$ is a Hom-associative algebra, $(A,\Delta,\alpha)$ is a Hom-coassociative coalgebra, and the condition
\begin{equation}
\label{cocycle}
\Delta \circ \mu = (\mu \otimes \alpha) \circ (\alpha \otimes \Delta) + (\alpha \otimes \mu) \circ (\Delta \otimes \alpha)
\end{equation}
holds.  An \textbf{infinitesimal bialgebra} (often abbreviated to \textbf{$\epsilon$-bialgebra}) is an $\epsilon$-Hom-bialgebra with $\alpha = Id$.  A morphism of $\epsilon$-Hom-bialgebras is a linear map that commutes with the twisting maps, the multiplications, and the comultiplications.
\end{enumerate}
\end{definition}

In terms of elements and writing $\mu(a,b)$ as $ab$, the condition \eqref{cocycle} can be rewritten as
\begin{equation}
\label{cocycle'}
\Delta(ab) = \sum_{(b)} \alpha(a)b_1 \otimes \alpha(b_2) + \sum_{(a)} \alpha(a_1) \otimes a_2\alpha(b)
\end{equation}
for all $a,b \in A$.   We will sometimes denote an $\epsilon$-Hom-bialgebra $(A,\mu,\Delta,\alpha)$ simply by $A$.

To be more explicit, an $\epsilon$-bialgebra is a tuple $(A,\mu,\Delta)$ consisting of an associative algebra $(A,\mu)$ and a coassociative coalgebra $(A,\Delta)$ such that the condition
\begin{equation}
\label{cocycleinf}
\Delta \circ \mu = (\mu \otimes Id) \circ (Id \otimes \Delta) + (Id \otimes \mu) \circ (\Delta \otimes Id)
\end{equation}
holds.  The condition \eqref{cocycleinf} is equivalent to
\[
\Delta(ab) = \sum_{(b)} ab_1 \otimes b_2 + \sum_{(a)} a_1 \otimes a_2b
\]
for all $a,b \in A$.  In other words, \eqref{cocycleinf} says that $\Delta \colon A \to A^{\otimes 2}$ is a derivation of $A$ with values in the $A$-bimodule $A^{\otimes 2}$, in which the left and the right $A$-actions are
\begin{equation}
\label{a2bimodule}
a \cdot (b_1 \otimes b_2) = ab_1 \otimes b_2 \quad\text{and}\quad
(b_1 \otimes b_2) \cdot a = b_1 \otimes b_2a.
\end{equation}
A morphism of $\epsilon$-bialgebras is a map that is simultaneously a morphism of associative algebras and of coassociative coalgebras.
Infinitesimal bialgebras were introduced by Joni and Rota \cite{jr}. Our terminology follows that of Aguiar \cite{aguiar}.

\begin{remark}
\label{rk:cocycle}
The condition \eqref{cocycle} is, in fact, a cocycle condition in Hom-associative algebra cohomology \cite{ms3} with non-trivial coefficients.  Indeed, for a Hom-associative algebra $(A,\mu,\alpha)$, one can regard $A^{\otimes n}$ as an $A$-bimodule (in the Hom-associative sense) with left and right $A$-actions:
\begin{equation}
\label{dotaction}
\begin{split}
a \bullet (b_1 \otimes \cdots \otimes b_n) &= \mu(\alpha(a),b_1) \otimes \alpha(b_2) \otimes \cdots \otimes \alpha(b_n),\\
(b_1 \otimes \cdots \otimes b_n) \bullet a &= \alpha(b_1) \otimes \cdots \otimes \alpha(b_{n-1}) \otimes \mu(b_n,\alpha(a)).
\end{split}
\end{equation}
With these notations, the condition \eqref{cocycle} becomes
\begin{equation}
\label{cocycle''}
\Delta(ab) = a \bullet \Delta(b) + \Delta(a) \bullet b.
\end{equation}
Set $C^n(A,A^{\otimes 2}) = \Hom(A^{\otimes n},A^{\otimes 2})$.  Now if $A$ is an $\epsilon$-Hom-bialgebra with comultiplication $\Delta \colon A \to A^{\otimes 2}$, then we can regard $\Delta \in C^1(A,A^{\otimes 2})$.  Generalizing Definition 3.4 in \cite{ms3}, the $1$-coboundary operator $\delta^1 \colon C^1(A,A^{\otimes 2}) \to C^2(A,A^{\otimes 2})$ is given by
\begin{equation}
\label{delta1}
(\delta^1 f)(a,b) = f(ab) - f(a)\bullet b - a \bullet f(b).
\end{equation}
In particular, the condition \eqref{cocycle} (in the form \eqref{cocycle''}) is equivalent to $\delta^1\Delta = 0$, i.e., $\Delta$ is a $1$-cocycle in $C^1(A,A^{\otimes 2})$.
\end{remark}

\begin{example}
\label{ex:trivialmult}
A Hom-associative algebra $(A,\mu,\alpha)$ becomes an $\epsilon$-Hom-bialgebra when equipped with the trivial comultiplication $\Delta = 0$.  Likewise, a Hom-coassociative coalgebra $(C,\Delta,\alpha)$ becomes an $\epsilon$-Hom-bialgebra when equipped with the trivial multiplication $\mu = 0$.\qed
\end{example}

In the following three results, we give some closure properties of the class of $\epsilon$-Hom-bialgebras.  The first two results are the Hom analogs of the corresponding observations in (\cite{aguiar} page 3).

\begin{proposition}
\label{prop:plusminus}
Let $(A,\mu,\Delta,\alpha)$ be an $\epsilon$-Hom-bialgebra.  Then so are $(A,-\mu,\Delta,\alpha)$, $(A,\mu,-\Delta,\alpha)$, and $(A,\muop,\Deltaop,\alpha)$, where $\muop = \mu \circ \tau$ and $\Deltaop = \tau \circ \Delta$.
\end{proposition}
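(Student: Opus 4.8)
The plan is to verify each of the three claimed structures directly against the axioms in Definition~\ref{def:homas}, observing that in each case the twisting map $\alpha$ is unchanged, so multiplicativity and comultiplicativity are either immediate or follow by a sign argument. For $(A,-\mu,\Delta,\alpha)$: Hom-associativity of $-\mu$ is clear since both sides of the Hom-associativity identity pick up exactly one factor of $-1$; multiplicativity $\alpha\circ(-\mu)=(-\mu)\circ\alpha^{\otimes 2}$ is immediate; Hom-coassociativity and comultiplicativity of $\Delta$ are unchanged; and for the cocycle condition \eqref{cocycle}, replacing $\mu$ by $-\mu$ multiplies the left-hand side $\Delta\circ(-\mu)$ by $-1$ and multiplies each of the two terms on the right-hand side by $-1$ as well (each contains exactly one $\mu$), so \eqref{cocycle} is preserved. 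The case $(A,\mu,-\Delta,\alpha)$ is dual: Hom-coassociativity and comultiplicativity of $-\Delta$ hold by the same one-factor-of-$(-1)$ argument, the Hom-associative structure is untouched, and in \eqref{cocycle} replacing $\Delta$ by $-\Delta$ scales both sides by $-1$ (the left side has one $\Delta$, and each right-hand term has exactly one $\Delta$).

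For the opposite structure $(A,\muop,\Deltaop,\alpha)$ with $\muop=\mu\circ\tau$ and $\Deltaop=\tau\circ\Delta$, I would first record that multiplicativity and comultiplicativity follow because $\alpha$ (hence $\alpha^{\otimes 2}$) commutes with the twist $\tau$, combined with the corresponding axioms for $\mu$ and $\Delta$. Hom-associativity of $\muop$ is the standard fact that the opposite of a Hom-associative product is Hom-associative: unwinding $\muop\circ(\alpha\otimes\muop)$ and $\muop\circ(\muop\otimes\alpha)$ via $\tau$ turns the identity $\mu\circ(\alpha\otimes\mu)=\mu\circ(\mu\otimes\alpha)$ into itself read backwards. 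Dually, $\Deltaop$ is Hom-coassociative. It is cleanest to do the associativity/coassociativity checks in Sweedler-type element notation: $\muop(a,b)=ba$ and $\Deltaop(a)=\sum a_2\otimes a_1$.

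The one step with any real content is checking the cocycle condition \eqref{cocycle} for the pair $(\muop,\Deltaop)$, and the natural route is via the element form \eqref{cocycle'}. We must show $\Deltaop(\muop(a,b))=\sum\alpha(a)\cdot_{op}b_1\otimes\alpha(b_2)+\sum\alpha(a_1)\otimes a_2\cdot_{op}\alpha(b)$ where $\cdot_{op}$ denotes $\muop$; that is, $\Deltaop(ba)=\sum b_1\alpha(a)\otimes\alpha(b_2)+\sum\alpha(a_1)\otimes \alpha(b)a_2$, with the understanding that $\Delta(a)=\sum a_1\otimes a_2$ and $\Delta(b)=\sum b_1\otimes b_2$. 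On the other hand, $\Deltaop(ba)=\tau(\Delta(ba))$, and applying \eqref{cocycle'} to $\Delta(ba)$ (swapping the roles of $a$ and $b$) gives $\Delta(ba)=\sum\alpha(b)a_1\otimes\alpha(a_2)+\sum\alpha(b_1)\otimes b_2\alpha(a)$; applying $\tau$ then yields exactly $\sum\alpha(a_2)\otimes\alpha(b)a_1+\sum b_2\alpha(a)\otimes\alpha(b_1)$, which matches the desired expression after re-indexing the Sweedler summations (replacing the dummy indices $a_1\leftrightarrow a_2$ and $b_1\leftrightarrow b_2$). I expect this bookkeeping with $\tau$ and the swapped variables to be the main (and only) place where care is needed; everything else is a one-line sign or naturality observation. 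Finally I would note that the three operations can be composed, so all sign/opposite combinations are $\epsilon$-Hom-bialgebras, though this is not needed for the statement as phrased.
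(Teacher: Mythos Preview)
Your approach is essentially the same as the paper's, just carried out in element/Sweedler form rather than at the map level; the paper handles the opposite Hom-(co)associativity via the permutation $\pi(a\otimes b\otimes c)=c\otimes b\otimes a$ and verifies the cocycle for $(\muop,\Deltaop)$ by the identity $\tau\circ\Delta\circ\mu\circ\tau = (\alpha\otimes\muop)\circ(\Deltaop\otimes\alpha)+(\muop\otimes\alpha)\circ(\alpha\otimes\Deltaop)$.

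There is, however, a real slip in your cocycle verification. The ``re-indexing'' $a_1\leftrightarrow a_2$, $b_1\leftrightarrow b_2$ at the end is \emph{not} a legal Sweedler move: $\sum a_2\otimes a_1=\tau\Delta(a)\neq\Delta(a)$ in general, so you cannot simply swap the subscripts. The problem originates upstream, in your target: when you write the cocycle condition \eqref{cocycle'} for the new structure $(\muop,\Deltaop)$, the Sweedler components must be those of $\Deltaop$, not of $\Delta$. If $\Delta(a)=\sum a_1\otimes a_2$ then $\Deltaop(a)=\sum a_2\otimes a_1$, so the correct target (expressed in the original $\Delta$-notation) is
\[
\Deltaop(ba)=\sum b_2\alpha(a)\otimes\alpha(b_1)+\sum\alpha(a_2)\otimes\alpha(b)a_1,
\]
and this is exactly what your computation of $\tau(\Delta(ba))$ produces---no re-indexing needed. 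So the argument is salvageable with this one correction, but as written the final step is invalid.
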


\begin{proof}
Both $(A,-\mu,\Delta,\alpha)$ and $(A,\mu,-\Delta,\alpha)$ are $\epsilon$-Hom-bialgebras because the axioms \eqref{homassaxioms}, \eqref{homcoassaxioms}, and \eqref{cocycle} clearly still hold when $\mu$ is replaced by $-\mu$ or when $\Delta$ is replaced by $-\Delta$.

For $(A,\muop,\Deltaop,\alpha)$, it is clear that $\alpha$ is multiplicative with respect to $\muop$ and comultiplicative with respect to $\Deltaop$.  Let $\pi \colon A^{\otimes 3} \to A^{\otimes 3}$ be the permutation linear isomorphism given by $\pi(a \otimes b \otimes c) = c \otimes b \otimes a$. The Hom-associativity axiom for $\muop$ now follows from that of $\mu$ and the identities
\[
\begin{split}
\muop \circ (\alpha \otimes \muop) &= \mu \circ (\mu \otimes \alpha) \circ \pi,\\
\muop \circ (\muop \otimes \alpha) &= \mu \circ (\alpha \otimes \mu) \circ \pi.
\end{split}
\]
Likewise the Hom-coassociativity axiom for $\Deltaop$ follows from that of $\Delta$ and the identities
\[
\begin{split}
(\alpha \otimes \Deltaop) \circ \Deltaop &= \pi \circ (\Delta \otimes \alpha) \circ \Delta,\\
(\Deltaop \otimes \alpha) \circ \Deltaop &= \pi \circ (\alpha \otimes \Delta) \circ \Delta.
\end{split}
\]
Finally, the cocycle condition \eqref{cocycle} for $\Deltaop$ and $\muop$ follows from the following calculation:
\[
\begin{split}
\Deltaop \circ \muop &= \tau \circ \Delta \circ \mu \circ \tau\\
&= \tau \circ (\mu \otimes \alpha) \circ (\alpha \otimes \Delta) \circ \tau + \tau \circ (\alpha \otimes \mu) \circ (\Delta \otimes \alpha) \circ \tau\\
&= (\alpha \otimes \muop) \circ (\Deltaop \otimes \alpha) + (\muop \otimes \alpha) \circ (\alpha \otimes \Deltaop).
\end{split}
\]
We have shown that $(A,\muop,\Deltaop,\alpha)$ is an $\epsilon$-Hom-bialgebra.
\end{proof}

For a $\bk$-module $V$, let $V^*$ denote its linear dual $\Hom(V,\bk)$.  When $V$ is finite dimensional, there is a canonical linear isomorphism $(V^*)^{\otimes 2} \cong (V^{\otimes 2})^*$.  For $\phi \in V^*$ and $v \in V$, we often write $\phi(v)$ as $\langle\phi,v\rangle$.

\begin{proposition}
\label{prop:dual}
Let $(A,\mu,\Delta,\alpha)$ be a finite dimensional $\epsilon$-Hom-bialgebra.  Then so is $(A^*,\Delta^*,\mu^*,\alpha^*)$, where
\[
\langle\alpha^*(\phi),a\rangle = \langle\phi, \alpha(a)\rangle,\quad
\langle\Delta^*(\phi \otimes \psi),a\rangle = \langle\phi \otimes \psi,\Delta(a)\rangle, \quad
\langle\mu^*(\phi), a\otimes b\rangle = \langle\phi,\mu(a \otimes b)\rangle
\]
for $a,b \in A$ and $\phi,\psi \in A^*$.
\end{proposition}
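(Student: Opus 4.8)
The plan is to verify the three defining conditions of an $\epsilon$-Hom-bialgebra for $(A^*,\Delta^*,\mu^*,\alpha^*)$ by pure dualization, exploiting the canonical isomorphism $(A^*)^{\otimes 2} \cong (A^{\otimes 2})^*$ (and $(A^*)^{\otimes 3} \cong (A^{\otimes 3})^*$) available in the finite dimensional case. The key observation is that each axiom for $A^*$ is the transpose of an axiom for $A$, so once one fixes the bookkeeping convention for how transposes interact with tensor factors, every verification becomes a one-line dualization.

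First I would record the elementary functoriality facts: for linear maps $f\colon V\to V'$ and $g\colon W\to W'$ between finite dimensional spaces, $(f\otimes g)^* = f^*\otimes g^*$ under the canonical identifications, $(f\circ g)^* = g^*\circ f^*$, and the twist map is self-dual, $\tau^* = \tau$. Granting these, the verification proceeds axiom by axiom. For the Hom-associative algebra structure $(A^*,\Delta^*,\alpha^*)$: multiplicativity $\alpha^*\circ\Delta^* = (\Delta\circ\alpha)^* = (\alpha^{\otimes 2}\circ\Delta)^* = (\alpha^*)^{\otimes 2}\circ\Delta^*$ is exactly the transpose of the comultiplicativity axiom \eqref{homcoassaxioms} for $A$; and Hom-associativity $\mu^*\circ(\alpha^*\otimes\mu^*) = \mu^*\circ(\mu^*\otimes\alpha^*)$ is the transpose of the Hom-coassociativity axiom for $(A,\Delta,\alpha)$. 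Wait — one must be slightly careful: transposing $(\alpha\otimes\Delta)\circ\Delta = (\Delta\otimes\alpha)\circ\Delta$ gives $\Delta^*\circ(\alpha^*\otimes\Delta^*) = \Delta^*\circ(\Delta^*\otimes\alpha^*)$, which is the Hom-associativity of $\Delta^*$ as a multiplication on $A^*$, as desired. Dually, the Hom-coassociative coalgebra axioms for $(A^*,\mu^*,\alpha^*)$ are the transposes of the multiplicativity and Hom-associativity axioms \eqref{homassaxioms} for $(A,\mu,\alpha)$.

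For the cocycle condition \eqref{cocycle}, I would transpose it directly. Starting from
\[
\Delta\circ\mu = (\mu\otimes\alpha)\circ(\alpha\otimes\Delta) + (\alpha\otimes\mu)\circ(\Delta\otimes\alpha),
\]
taking duals of both sides (using that $*$ is linear and additive on $\Hom$-spaces and reverses composition) yields
\[
\mu^*\circ\Delta^* = (\alpha^*\otimes\Delta^*)\circ(\mu^*\otimes\alpha^*) + (\Delta^*\otimes\alpha^*)\circ(\alpha^*\otimes\mu^*),
\]
which is precisely condition \eqref{cocycle} for the quadruple $(A^*,\Delta^*,\mu^*,\alpha^*)$ with comultiplication $\Delta^*$ replaced by the ``new multiplication'' $\mu^*$ and new comultiplication $\Delta^*$ — that is, reading $\mu^*$ as the multiplication and $\Delta^*$ as the comultiplication of $A^*$, with twisting map $\alpha^*$. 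The two summands on the right match the two summands of \eqref{cocycle} after relabeling. Thus all axioms hold and $(A^*,\Delta^*,\mu^*,\alpha^*)$ is an $\epsilon$-Hom-bialgebra.

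The only genuine obstacle is the combinatorial bookkeeping: one must consistently track which of the canonical isomorphisms $(A^*)^{\otimes k}\cong(A^{\otimes k})^*$ is being applied and in what order, and confirm that no spurious twist or permutation creeps in when transposing maps like $\mu\otimes\alpha$ between $A^{\otimes 2}\otimes A$ and $A^{\otimes 2}$. I would dispatch this either by writing out both sides pairwise against arbitrary elements $\phi,\psi,\chi\in A^*$ and $a,b,c\in A$ using the bracket notation $\langle-,-\rangle$ for exactly one of the three axioms (say the cocycle condition, which is the least symmetric) as a sanity check, and then invoke the functoriality of $*$ for the remaining ones. In the element-wise check, one uses $\langle\mu^*(\phi),a\otimes b\rangle = \langle\phi,ab\rangle$ and $\langle\Delta^*(\phi\otimes\psi),a\rangle = \langle\phi\otimes\psi,\Delta(a)\rangle$ together with $\langle\alpha^*(\phi),a\rangle = \langle\phi,\alpha(a)\rangle$, and the equality of the two sides of the transposed cocycle condition reduces, after expanding, to \eqref{cocycle'} for $A$ paired against $\phi\otimes\psi$ and $\chi$. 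No deeper idea is required — this is the standard ``everything dualizes'' argument, and finite dimensionality is used solely to identify $(A^*)^{\otimes 2}$ with $(A^{\otimes 2})^*$.
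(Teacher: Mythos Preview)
Your proposal is correct and follows essentially the same approach as the paper: the paper cites \cite{ms4} for the Hom-(co)associative parts and then verifies the cocycle condition by the element-wise pairing computation you describe as your sanity check, while you phrase everything via functorial transposition --- these are two presentations of the same dualization argument. One small slip: in the sentence ``reading $\mu^*$ as the multiplication and $\Delta^*$ as the comultiplication'' you have the roles reversed (in the quadruple $(A^*,\Delta^*,\mu^*,\alpha^*)$ it is $\Delta^*$ that plays the role of multiplication and $\mu^*$ that of comultiplication), but the equation you actually derived is the correct one for that quadruple, so the argument stands.
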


\begin{proof}
One can check directly that $(A^*,\Delta^*,\alpha^*)$ is a Hom-associative algebra (which is true even if $A$ is not finite dimensional) and that $(A^*,\mu^*,\alpha^*)$ is a Hom-coassociative coalgebra (which requires that $A$ be finite dimensional) \cite{ms4} (Corollary 4.12).  It remains to establish the cocycle condition \eqref{cocycle} for $\Delta^*$ and $\mu^*$.  For $a,b \in A$ and $\phi,\psi \in A^*$, we compute as follows:
\[
\begin{split}
\langle (\mu^* \circ \Delta^*)(\phi \otimes \psi), a \otimes b\rangle &= \langle \phi \otimes \psi, (\Delta \circ \mu)(a \otimes b)\rangle\\
&= \langle \phi \otimes \psi, (\mu \otimes \alpha) \circ (\alpha \otimes \Delta)(a \otimes b)\rangle\\
&\relphantom{} + \langle \phi \otimes \psi, (\alpha \otimes \mu) \circ (\Delta \otimes \alpha)(a \otimes b)\rangle\\
&= \langle (\alpha^* \otimes \Delta^*) \circ (\mu^* \otimes \alpha^*)(\phi \otimes \psi), (a \otimes b)\rangle\\
&\relphantom{} + \langle(\Delta^* \otimes \alpha^*) \circ (\alpha^* \otimes \mu^*)(\phi \otimes \psi), (a \otimes b)\rangle.
\end{split}
\]
We have shown that the cocycle condition \eqref{cocycle} holds for $\Delta^*$ and $\mu^*$.
\end{proof}

The following result shows that every $\epsilon$-Hom-bialgebra gives rise to a derived sequence of $\epsilon$-Hom-bialgebras with twisted (co)multiplications and twisting maps.  Such a twisting result for Hom-type algebras is referred to as the Second Twisting Principle in \cite{yau11}.  We will use it again in later sections to prove the Second Twisting Principle for coboundary $\epsilon$-Hom-bialgebras.

\begin{theorem}
\label{thm:secondtp}
Let $(A,\mu,\Delta,\alpha)$ be an $\epsilon$-Hom-bialgebra.  Then so is
\[
A^n = (A,\mu^{(n)},\Delta^{(n)},\alpha^{2^n})
\]
for each $n \geq 0$, where $\mu^{(n)} = \alpha^{2^n-1}\circ\mu$ and $\Delta^{(n)} = \Delta \circ \alpha^{2^n-1}$.
\end{theorem}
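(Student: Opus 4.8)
The plan is to verify the four defining conditions of an $\epsilon$-Hom-bialgebra for the quadruple $A^n = (A,\mu^{(n)},\Delta^{(n)},\alpha^{2^n})$ directly, using only multiplicativity, comultiplicativity, Hom-(co)associativity, and the cocycle condition for the original structure $(A,\mu,\Delta,\alpha)$. Throughout I will write $\beta = \alpha^{2^n}$ and abbreviate $k = 2^n-1$, so that $\mu^{(n)} = \alpha^k\circ\mu$, $\Delta^{(n)} = \Delta\circ\alpha^k$, and $\beta = \alpha^{k+1} = \alpha\circ\alpha^k$. The key tool is that $\alpha$ commutes past $\mu$ and $\Delta$ by (co)multiplicativity, i.e. $\alpha\circ\mu = \mu\circ\alpha^{\otimes 2}$ and $\alpha^{\otimes 2}\circ\Delta = \Delta\circ\alpha$, so that powers of $\alpha$ can be freely migrated to wherever they are convenient; I expect to use this repeatedly and silently.

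First I would note that $(A,\mu^{(n)},\alpha^{2^n})$ is a Hom-associative algebra: this is precisely the first twisting principle for Hom-associative algebras (the $n$-th derived algebra), which is the associative-algebra shadow of Theorem \ref{thm:secondtp} itself and whose proof amounts to inserting the right number of $\alpha$'s into the Hom-associativity axiom. Concretely, multiplicativity of $\beta$ with respect to $\mu^{(n)}$ follows from that of $\alpha$ with respect to $\mu$, and Hom-associativity of $\mu^{(n)}$ reduces, after pulling all the $\alpha^k$-factors out front, to $\alpha^{?}\circ\mu\circ(\alpha\otimes\mu) = \alpha^{?}\circ\mu\circ(\mu\otimes\alpha)$ for a suitable common power. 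Dually, $(A,\Delta^{(n)},\alpha^{2^n})$ is a Hom-coassociative coalgebra by the same bookkeeping applied to \eqref{homcoassaxioms}; one may also simply invoke Proposition \ref{prop:dual} together with the algebra case, or cite that this is the coalgebra version of the second twisting principle. Neither of these first two steps is the real content.

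The substantive step is the cocycle condition \eqref{cocycle} for $\Delta^{(n)}$ and $\mu^{(n)}$, namely
\[
\Delta^{(n)}\circ\mu^{(n)} = (\mu^{(n)}\otimes\beta)\circ(\beta\otimes\Delta^{(n)}) + (\beta\otimes\mu^{(n)})\circ(\Delta^{(n)}\otimes\beta).
\]
The plan is to start from the left side: $\Delta^{(n)}\circ\mu^{(n)} = \Delta\circ\alpha^k\circ\alpha^k\circ\mu = \Delta\circ\mu\circ(\alpha^k)^{\otimes 2}\circ(\alpha^k)^{\otimes 2}$, using multiplicativity to move the $\alpha^k$'s through $\mu$; then apply \eqref{cocycle} to the inner $\Delta\circ\mu$, obtaining $(\mu\otimes\alpha)\circ(\alpha\otimes\Delta)\circ(\alpha^{2k})^{\otimes 2} + (\alpha\otimes\mu)\circ(\Delta\otimes\alpha)\circ(\alpha^{2k})^{\otimes 2}$. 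The task is then to reorganize each of the two terms into the desired form by redistributing $\alpha$-powers via multiplicativity and comultiplicativity. For the first term I would push the $\alpha^{2k}$'s inward: on the first tensor factor this should produce $\mu\circ\alpha^{?}$ applied after a $\Delta\circ\alpha^{?}$ coming from the other side, and on the second factor extra $\alpha$'s accumulate onto $\beta$; a careful count of exponents is exactly the place where one must be meticulous. The target exponents work out because $2k = (k+1) + (k-1)$ and $k+1$ is the exponent defining $\beta$, so the $\alpha$'s split as "one copy of $\beta$ plus the $\alpha^k$ built into $\Delta^{(n)}$ or $\mu^{(n)}$" on the appropriate legs. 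The second term is handled symmetrically.

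The main obstacle, then, is purely the index arithmetic: making sure that after all the migrations of $\alpha$ each leg of each composite carries exactly the power of $\alpha$ demanded by $\mu^{(n)}$, $\Delta^{(n)}$, or $\beta = \alpha^{2^n}$, with no surplus or deficit. I expect no conceptual difficulty — every manipulation is an instance of (co)multiplicativity or the original cocycle identity — but the clean way to present it is to do the $n=1$ case (where $k=1$, $2k=2$) explicitly and then remark that the general case is identical after replacing each single $\alpha$ by the appropriate power, or alternatively to derive the general $A^n$ by iterating the $n=1$ statement $A^1 = (A,\alpha\circ\mu,\Delta\circ\alpha,\alpha^2)$, since $(\alpha^2)^{2^{n-1}} = \alpha^{2^n}$ and the derived operations compose correctly. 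I would likely choose the induction route: prove $A^1$ is an $\epsilon$-Hom-bialgebra carefully, then observe $A^{n} = (A^{n-1})^1$ and conclude by induction on $n$, which keeps the exponent bookkeeping trivial at each stage.
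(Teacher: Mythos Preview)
Your proposal is correct and lands on essentially the same argument as the paper: reduce to the case $n=1$ via the observation $A^{n} = (A^{n-1})^1$ and verify the cocycle condition for $A^1$ by expanding $\Delta^{(1)}\circ\mu^{(1)} = (\alpha^2)^{\otimes 2}\circ\Delta\circ\mu$, applying the original cocycle identity \eqref{cocycle}, and redistributing $\alpha$'s via (co)multiplicativity. The paper skips the direct-exponent computation you sketch for general $n$ entirely and goes straight to the inductive reduction, which, as you yourself conclude, is the cleaner route.
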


\begin{proof}
First note that $A^0 = A$, $A^1 = (A,\muone = \alpha\circ\mu, \Deltaone = \Delta \circ \alpha, \alpha^2)$, and $A^{n+1} = (A^n)^1$.  Therefore, by an induction argument, it suffices to prove the case $n=1$, i.e., that $A^1$ is an $\epsilon$-Hom-bialgebra.  One can check directly that $(A,\muone,\alpha^2)$ is a Hom-associative algebra and that $(A,\Deltaone,\alpha^2)$ is a Hom-coassociative coalgebra, as was done in \cite{yau11}.  It remains to establish the cocycle condition \eqref{cocycle} in $A^1$.  Using $\muone = \alpha \circ \mu = \mu \circ \alpha^{\otimes 2}$, $\Deltaone = \Delta \circ \alpha = \alpha^{\otimes 2} \circ \Delta$, and the cocycle condition \eqref{cocycle} in $A$, we compute as follows:
\[
\begin{split}
\Deltaone \circ \muone &= (\alpha^2)^{\otimes 2} \circ \Delta \circ \mu\\
&= ((\alpha^2 \circ \mu) \otimes \alpha^3) \circ (\alpha \otimes \Delta) + (\alpha^3 \otimes (\alpha^2 \circ \mu)) \circ (\Delta \otimes \alpha)\\
&= (\muone \otimes \alpha^2) \circ \alpha^{\otimes 3} \circ (\alpha \otimes \Delta) + (\alpha^2 \otimes \muone) \circ \alpha^{\otimes 3} \circ (\Delta \otimes \alpha)\\
&= (\muone \otimes \alpha^2) \circ (\alpha^2 \otimes \Deltaone) + (\alpha^2 \otimes \muone) \circ (\Deltaone \otimes \alpha^2).
\end{split}
\]
We have shown that $A^1$ is an $\epsilon$-Hom-bialgebra, as desired.
\end{proof}

The $\epsilon$-Hom-bialgebra $A^n$ in Theorem \ref{thm:secondtp} is referred to as the \textbf{$n$th derived $\epsilon$-Hom-bialgebra} of $A$.

In the following result, it is shown that every $\epsilon$-bialgebra can be twisted into an $\epsilon$-Hom-bialgebra via any self-morphism.  It is the main tool with which we construct concrete examples of $\epsilon$-Hom-bialgebras.  Such a twisting result for Hom-type algebras is referred to as the First Twisting Principle in \cite{yau11}.  The First Twisting Principle was first established for $G$-Hom-associative algebras (which include Hom-associative and Hom-Lie algebras as special cases) in \cite{yau2} (Theorem 2.3).  Its variations and generalizations have appeared in many papers on Hom-type algebras, including \cite{ama} (Theorem 2.7), \cite{ams} (Theorems 1.7 and 2.6), \cite{fg2} (Section 2), \cite{gohr} (Proposition 1), \cite{mak} (Theorems 2.1 and 3.5), and \cite{ms4} (Theorem 3.15 and Proposition 3.30).  It is the major method with which concrete examples of Hom-type algebras are constructed.

\begin{theorem}
\label{thm:firsttp}
Let $(A,\mu,\Delta)$ be an $\epsilon$-bialgebra and $\alpha \colon A \to A$ be a morphism of $\epsilon$-bialgebras.  Then
\[
A_\alpha = (A,\mu_\alpha,\Delta_\alpha,\alpha)
\]
is an $\epsilon$-Hom-bialgebra, where $\mu_\alpha = \alpha \circ \mu$ and $\Delta_\alpha = \Delta \circ \alpha$.
\end{theorem}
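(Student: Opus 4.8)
The plan is to check the three defining conditions of an $\epsilon$-Hom-bialgebra for $A_\alpha = (A,\mu_\alpha,\Delta_\alpha,\alpha)$ one at a time: that $(A,\mu_\alpha,\alpha)$ is Hom-associative \eqref{homassaxioms}, that $(A,\Delta_\alpha,\alpha)$ is Hom-coassociative \eqref{homcoassaxioms}, and that the cocycle condition \eqref{cocycle} holds for the pair $(\mu_\alpha,\Delta_\alpha)$. The hypothesis that $\alpha$ is a morphism of $\epsilon$-bialgebras says exactly that $\alpha\circ\mu = \mu\circ\alpha^{\otimes 2}$ and $\alpha^{\otimes 2}\circ\Delta = \Delta\circ\alpha$, and these two intertwining identities are the only facts about $\alpha$ I will use; they let me move copies of $\alpha$ freely past $\mu$ and $\Delta$, which is what turns the strict axioms of $(A,\mu,\Delta)$ into the $\alpha$-twisted ones.

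The first two conditions are, respectively, the First Twisting Principle for Hom-associative algebras \cite{yau2} and its dual for Hom-coassociative coalgebras \cite{ms4}, so I would either cite those or dispatch them directly. For multiplicativity, $\alpha\circ\mu_\alpha = \alpha^2\circ\mu = \alpha\circ\mu\circ\alpha^{\otimes 2} = \mu_\alpha\circ\alpha^{\otimes 2}$, and Hom-associativity reduces to the associativity of $\mu$ via the identities $\mu_\alpha\circ(\alpha\otimes\mu_\alpha) = \alpha^2\circ\mu\circ(Id\otimes\mu)$ and $\mu_\alpha\circ(\mu_\alpha\otimes\alpha) = \alpha^2\circ\mu\circ(\mu\otimes Id)$. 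Reversing the arrows and interchanging the roles of $\mu$, $\Delta$ and of the two intertwining identities gives comultiplicativity and Hom-coassociativity of $\Delta_\alpha$ in the same way.

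The substantive — indeed the only nontrivial — step is the cocycle condition. I would expand the two terms on the right-hand side of \eqref{cocycle} using $\mu_\alpha = \alpha\circ\mu$ and $\Delta_\alpha = \Delta\circ\alpha$ and factor the twisting maps out to the two ends:
\[
(\mu_\alpha\otimes\alpha)\circ(\alpha\otimes\Delta_\alpha) = \alpha^{\otimes 2}\circ(\mu\otimes Id)\circ(Id\otimes\Delta)\circ\alpha^{\otimes 2},
\]
and symmetrically
\[
(\alpha\otimes\mu_\alpha)\circ(\Delta_\alpha\otimes\alpha) = \alpha^{\otimes 2}\circ(Id\otimes\mu)\circ(\Delta\otimes Id)\circ\alpha^{\otimes 2} .
\]
Adding these and invoking the $\epsilon$-bialgebra cocycle identity \eqref{cocycleinf} collapses the sum to $\alpha^{\otimes 2}\circ(\Delta\circ\mu)\circ\alpha^{\otimes 2}$. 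On the other hand the left-hand side is $\Delta_\alpha\circ\mu_\alpha = \Delta\circ\alpha^2\circ\mu$, and one further application of each intertwining identity shows $\alpha^{\otimes 2}\circ(\Delta\circ\mu)\circ\alpha^{\otimes 2} = \Delta\circ\alpha\circ\mu\circ\alpha^{\otimes 2} = \Delta\circ\alpha^2\circ\mu$, so the two sides agree and \eqref{cocycle} holds. I expect the only real care needed anywhere is the bookkeeping of tensor slots when commuting $\alpha^{\otimes 2}$ past $\mu\otimes Id$ and $Id\otimes\Delta$; there is no conceptual obstacle, since conjugating the $\epsilon$-bialgebra structure maps by the endomorphism $\alpha$ produces precisely the $\alpha$-twisted Hom-axioms. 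Alternatively, one can phrase this last step cohomologically as in Remark \ref{rk:cocycle}: $\delta^1\Delta_\alpha$ for the Hom-associative algebra $(A,\mu_\alpha,\alpha)$ is obtained from the ordinary Hochschild coboundary $\delta^1\Delta = 0$ by pre- and post-composition with $\alpha^{\otimes 2}$, hence vanishes.
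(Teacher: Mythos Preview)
Your proof is correct and follows essentially the same approach as the paper: both cite \cite{yau2} and \cite{ms4} for the Hom-(co)associative parts and then verify the cocycle condition \eqref{cocycle} by the same factoring-out-$\alpha^{\otimes 2}$ computation, only you run it from the right-hand side toward $\Delta_\alpha\circ\mu_\alpha$ while the paper starts from $\Delta_\alpha\circ\mu_\alpha = (\alpha^2)^{\otimes 2}\circ\Delta\circ\mu$ and works outward. The cohomological remark at the end is a pleasant gloss but not a different argument.
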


\begin{proof}
It is already shown in \cite{yau2} (Theorem 2.3) that $(A,\mu_\alpha,\alpha)$ is a Hom-associative algebra.  The dual argument \cite{ms4} shows that $(A,\Delta_\alpha,\alpha)$ is a Hom-coassociative coalgebra.  It remains to establish the cocycle condition \eqref{cocycle} in $A_\alpha$.  Using $\mualpha = \alpha \circ \mu = \mu \circ \alpha^{\otimes 2}$, $\Deltaalpha = \Delta \circ \alpha = \alpha^{\otimes 2} \circ \Delta$, and the cocycle condition \eqref{cocycleinf} in the $\epsilon$-bialgebra $A$, we compute as follows:
\[
\begin{split}
\Deltaalpha \circ \mualpha &= (\alpha^2)^{\otimes 2} \circ \Delta \circ \mu\\
&= (\alpha^2)^{\otimes 2} \circ (\mu \otimes Id) \circ (Id \otimes \Delta) +  (\alpha^2)^{\otimes 2} \circ(Id \otimes \mu) \circ (\Delta \otimes Id)\\
&= ((\mualpha \circ \alpha^{\otimes 2}) \otimes \alpha^2) \circ (Id \otimes \Delta) + (\alpha^2 \otimes (\mualpha \circ \alpha^{\otimes 2})) \circ (\Delta \otimes Id)\\
&= (\mualpha \otimes \alpha) \circ \alpha^{\otimes 3} \circ (Id \otimes \Delta) + (\alpha \otimes \mualpha) \circ \alpha^{\otimes 3} \circ (\Delta \otimes Id)\\
&= (\mualpha \otimes \alpha) \circ (\alpha \otimes \Deltaalpha) + (\alpha \otimes \mualpha) \circ (\Deltaalpha \otimes \alpha).
\end{split}
\]
We have shown that $A_\alpha$ is an $\epsilon$-Hom-bialgebra.
\end{proof}

In order to use Theorem \ref{thm:firsttp} to construct $\epsilon$-Hom-bialgebras, we need to be able to construct $\epsilon$-bialgebra morphisms.  The following result is useful for this purpose.

\begin{proposition}
\label{prop:morphism}
Let $(A,\mu,\Delta)$ be an $\epsilon$-bialgebra and $\alpha \colon A \to A$ be an associative algebra morphism.  Suppose that $X$ is a set of associative algebra generators for $A$ and that $\Delta \circ \alpha$ and $\alpha^{\otimes 2} \circ \Delta$ are equal when restricted to $X$.  Then $\alpha$ is an $\epsilon$-bialgebra morphism.
\end{proposition}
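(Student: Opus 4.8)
The plan is to show that $\alpha$, already an associative algebra morphism, is also a coalgebra morphism, i.e., that $\Delta \circ \alpha = \alpha^{\otimes 2} \circ \Delta$ holds on all of $A$. Since the two sides agree on the generating set $X$ by hypothesis, and every element of $A$ is a $\bk$-linear combination of products of elements of $X$, it suffices by linearity to show that the set $S = \{a \in A : (\Delta\circ\alpha)(a) = (\alpha^{\otimes 2}\circ\Delta)(a)\}$ is closed under the multiplication $\mu$. So the core of the argument is: if $a, b \in S$, then $ab \in S$.

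To carry this out, I would expand both $(\Delta\circ\alpha)(ab)$ and $(\alpha^{\otimes 2}\circ\Delta)(ab)$ using the cocycle identity \eqref{cocycleinf} (in Sweedler notation, $\Delta(xy) = \sum_{(y)} xy_1 \otimes y_2 + \sum_{(x)} x_1 \otimes x_2 y$) together with the fact that $\alpha$ is an algebra morphism, so $\alpha(ab) = \alpha(a)\alpha(b)$. On the one hand,
\[
(\Delta\circ\alpha)(ab) = \Delta\big(\alpha(a)\alpha(b)\big) = \sum_{(\alpha(b))} \alpha(a)\alpha(b)_1 \otimes \alpha(b)_2 + \sum_{(\alpha(a))} \alpha(a)_1 \otimes \alpha(a)_2\,\alpha(b).
\]
On the other hand, applying $\alpha^{\otimes 2}$ to the cocycle expansion of $\Delta(ab)$ and using multiplicativity of $\alpha$,
\[
(\alpha^{\otimes 2}\circ\Delta)(ab) = \sum_{(b)} \alpha(a)\alpha(b_1) \otimes \alpha(b_2) + \sum_{(a)} \alpha(a_1) \otimes \alpha(a_2)\,\alpha(b).
\]
Now $\Delta(\alpha(b)) = (\alpha^{\otimes 2}\circ\Delta)(b)$ because $b \in S$, which is precisely the statement $\sum_{(\alpha(b))} \alpha(b)_1 \otimes \alpha(b)_2 = \sum_{(b)} \alpha(b_1)\otimes\alpha(b_2)$; applying left multiplication by $\alpha(a)$ in the first tensor factor identifies the first summand of the two displays. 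Symmetrically, $a \in S$ identifies the second summands. Hence $ab \in S$, as required, and an induction on word length in $X$ finishes the proof.

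I do not expect a serious obstacle here; the proposition is essentially a "check on generators" lemma, and the only mild subtlety is bookkeeping with Sweedler notation when $\alpha$ is pushed through $\Delta$. One should be a little careful that the relation defining membership in $S$ is genuinely $\bk$-linear (so that $S$ is a subspace and it suffices to reach a spanning set of products), and that the cocycle identity is being used in the direction that expresses $\Delta$ of a product, not the reverse. With those points noted, the verification that $S$ is a subalgebra containing $X$ — hence all of $A$ — is routine.
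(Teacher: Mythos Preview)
Your proposal is correct and follows essentially the same approach as the paper: both reduce to showing that if $\Delta\circ\alpha$ and $\alpha^{\otimes 2}\circ\Delta$ agree on $a$ and $b$, then they agree on $ab$, by expanding both sides with the cocycle identity \eqref{cocycleinf} and the multiplicativity of $\alpha$. Your framing in terms of the subalgebra $S$ is just a clean repackaging of the paper's inductive step.
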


\begin{proof}
We need to show that $\Delta \circ \alpha = \alpha^{\otimes 2} \circ \Delta$ on all of $A$. Since $X$ generates $A$ as an associative algebra, it suffices to show that, if $\Delta(\alpha(a)) = \alpha^{\otimes 2}(\Delta(a))$ and $\Delta(\alpha(b)) = \alpha^{\otimes 2}(\Delta(b))$, then $\Delta(\alpha(ab)) = \alpha^{\otimes 2}(\Delta(ab))$.  We compute as follows:
\[
\begin{split}
\Delta(\alpha(ab)) &= \Delta(\alpha(a)\alpha(b))\\
&= \alpha(a)\alpha(b)_1 \otimes \alpha(b)_2 + \alpha(a)_1 \otimes \alpha(a)_2\alpha(b)\\
&= \alpha(a)\alpha(b_1) \otimes \alpha(b_2) + \alpha(a_1) \otimes \alpha(a_2)\alpha(b)\\
&= \alpha^{\otimes 2}(ab_1 \otimes b_2 + a_1 \otimes a_2b)\\
&= \alpha^{\otimes 2}(\Delta(ab)).
\end{split}
\]
In the second and the last equalities above, we used the cocycle condition \eqref{cocycleinf} in $A$.  In the third equality, we used the assumption that $\Delta \circ \alpha$ and $\alpha^{\otimes 2}\circ \Delta$ are equal on $a$ and $b$.  In the first and the fourth equalities, we used the assumption that $\alpha$ is an associative algebra morphism.
\end{proof}

The rest of this section is devoted to examples of $\epsilon$-Hom-bialgebras that can be constructed using the First Twisting Principle (Theorem \ref{thm:firsttp}).

\begin{example}[\textbf{Dual numbers $\epsilon$-Hom-bialgebra}]
\label{ex:dualnumber}
The algebra of dual numbers $\bk[x]/(x^2)$ is an $\epsilon$-bialgebra with $\Delta(1) = 0$ and $\Delta(x) = x \otimes x$ (\cite{aguiar} Example 2.3.6).  We claim that there are exactly two $\epsilon$-bialgebra morphisms $\alpha \colon \bk[x]/(x^2) \to \bk[x]/(x^2)$ that preserve $1$, namely, $\alpha = Id$ and $\alpha(c + dx) = c$.  Indeed, a unit-preserving algebra morphism on $\bk[x]/(x^2)$ is determined by $\alpha(x) = a + bx$.  Since $\Delta(\alpha(1)) = 0 = \alpha^{\otimes 2}(\Delta(1))$, by Proposition \ref{prop:morphism} $\alpha$ is an $\epsilon$-bialgebra morphism if and only if $\Delta(\alpha(x)) = \alpha^{\otimes 2}(\Delta(x))$.  We have
\[
\Delta(\alpha(x)) = \Delta(a + bx) = bx \otimes x
\]
and
\[
\begin{split}
\alpha^{\otimes 2}(\Delta(x)) &= \alpha(x) \otimes \alpha(x)\\
&=a \otimes a + a \otimes bx + bx \otimes a + bx \otimes bx.
\end{split}
\]
Therefore, $\alpha$ is an $\epsilon$-bialgebra morphism if and only if $a = 0$ and $b \in \{0,1\}$, i.e., $\alpha = Id$ or $\alpha(c + dx) = c$.

Consider the $\epsilon$-bialgebra morphism $\alpha \colon \bk[x]/(x^2) \to \bk[x]/(x^2)$ given by the constant-coefficient projection $\alpha(c + dx) = c$.  By the First Twisting Principle (Theorem \ref{thm:firsttp}), there is a corresponding $\epsilon$-Hom-bialgebra
\[
(\bk[x]/(x^2))_\alpha = (\bk[x]/(x^2),\mualpha = \alpha \circ \mu,\Deltaalpha = \Delta \circ \alpha,\alpha)
\]
with
\[
\mualpha(a+bx,c+dx) = ac \quad\text{and}\quad \Deltaalpha(a+bx) = \Delta(a) = 0
\]
In other words, $(\bk[x]/(x^2))_\alpha$ is the Hom-associative algebra $(\bk[x]/(x^2),\mualpha,\alpha)$ considered as an $\epsilon$-Hom-bialgebra with trivial comultiplication, as in Example \ref{ex:trivialmult}.
\qed
\end{example}

\begin{example}[\textbf{Polynomial $\epsilon$-Hom-bialgebras}]
\label{ex:polynomial}
The polynomial algebra $\bk[x]$ is an $\epsilon$-bialgebra whose comultiplication is determined by $\Delta(1) = 0$ and
\[
\Delta(x^n) = \sum_{i=0}^{n-1} x^i \otimes x^{n-1-i}
\]
for $n \geq 1$ (\cite{aguiar} Example 2.3.3).  This comultiplication is important in the calculus of divided differences \cite{hr,jr}.

We claim that there is a bijection between (the underlying set of) $\bk$ and unit-preserving $\epsilon$-bialgebra morphisms $\alpha_a \colon \bk[x] \to \bk[x]$ determined by $\alpha_a(x) = a + x$ for $a \in \bk$.  Indeed, we have
\[
\Delta(\alpha_a(x)) = \Delta(a+x) = 1 \otimes 1 = \alpha_a^{\otimes 2}(\Delta(x))
\]
and $\Delta(\alpha_a(1)) = 0 = \alpha_a^{\otimes 2}(\Delta(1))$.  Since $\{1,x\}$ generates $\bk[x]$ as an associative algebra, it follows from Proposition \ref{prop:morphism} that each $\alpha_a$ is an $\epsilon$-bialgebra morphism on $\bk[x]$.  Conversely, suppose that $\alpha \colon \bk[x] \to \bk[x]$ is a unit-preserving $\epsilon$-bialgebra morphism.  In particular, $\alpha$ is determined by $\alpha(x) = \sum_{n=0}^r a_nx^n \in \bk[x]$.  On the one hand, we have
\[
\Delta(\alpha(x)) = \alpha^{\otimes 2}(\Delta(x)) = \alpha^{\otimes 2}(1 \otimes 1) = 1 \otimes 1.
\]
On the other hand, we have
\[
\begin{split}
\Delta(\alpha(x)) &= \sum a_n\Delta(x^n)\\
&= a_0\Delta(1) + \sum_{n=1}^r a_n\left(\sum_{i=0}^{n-1} x^i \otimes x^{n-1-i}\right)\\
&= a_1(1 \otimes 1) + a_2(1 \otimes x + x \otimes 1) + \cdots\\
&\relphantom{} + a_r(1 \otimes x^{r-1} + x \otimes x^{r-2} + \cdots + x^{r-1} \otimes 1).
\end{split}
\]
It follows that $a_1 = 1$ and $a_n = 0$ for $n \geq 2$, i.e., $\alpha(x) = a_0 + x = \alpha_{a_0}(x)$.

By the First Twisting Principle (Theorem \ref{thm:firsttp}), for each $a \in \bk$, there is a corresponding $\epsilon$-Hom-bialgebra
\[
\bk[x]_{\alpha_a} = (\bk[x],\mu_{\alpha_a},\Delta_{\alpha_a},\alpha_a).
\]
Its multiplication $\mu_{\alpha_a} = \alpha_a \circ \mu$ is given by
\[
\mu_{\alpha_a}(f,g) = f(a+x)g(a+x)
\]
for $f,g \in \bk[x]$.  Its comultiplication $\Delta_{\alpha_a} = \alpha_a^{\otimes 2} \circ \Delta$ is given by $\Delta_{\alpha_a}(1) = 0$ and
\[
\Delta_{\alpha_a}(x^n) = \sum_{i=0}^{n-1} (a + x)^i \otimes (a+x)^{n-1-i}
\]
for $n \geq 1$.  We can think of the collection $\{\bk[x]_{\alpha_a} \colon a \in \bk\}$ as a $1$-parameter $\epsilon$-Hom-bialgebra deformations of the $\epsilon$-bialgebra $\bk[x]$.  We recover $\bk[x]$ by taking $a=0$.
\qed
\end{example}

An important class of $\epsilon$-bialgebras comes from the path algebras of quivers, as shown in (\cite{aguiar} Example 2.3.2).  In the rest of this section, we discuss how the First Twisting Principle (Theorem \ref{thm:firsttp}) can be applied to these path $\epsilon$-bialgebras to give $\epsilon$-Hom-bialgebras.  Let us briefly recall the relevant notions of quivers and their path algebras.  More detailed discussions of quivers can be found in, e.g., (\cite{ass} Chapter II).

\subsection{Quivers and path $\epsilon$-bialgebras}
\label{subsec:quivers}

A \textbf{quiver} $Q = (Q_0,Q_1,s,t)$ consists of a set $Q_0$ of \textbf{vertices}, a set $Q_1$ of \textbf{arrows}, and two maps $s,t \colon Q_1 \to Q_0$ called the \textbf{source} and the \textbf{target}.  In other words, a quiver is a directed graph.  Graphically, an arrow $\phi \in Q_1$ is represented as a directed edge
\[
s(\phi) \xrightarrow{\phi} t(\phi)
\]
from its source to its target.  A \textbf{path} of length $l \geq 1$ in a quiver $Q$ is an $l$-tuple $\phi_1\cdots\phi_l$ of arrows satisfying $t(\phi_i) = s(\phi_{i+1})$ for $1 \leq i \leq l-1$.  A path of length $0$ is defined as a vertex in $Q_0$.  The set of paths of length $l$ is denoted by $Q_l$.  For a path $\phi_1\cdots\phi_l$, its source and target are defined as $s(\phi_1)$ and $t(\phi_l)$, respectively.

Given a quiver $Q$, its \textbf{path algebra} is the associative algebra $\bk Q = \bigoplus_{l\geq 0} \bk Q_l$ whose underlying $\bk$-module has basis $\cup_{l \geq 0} Q_l$.  The multiplication is determined on the basis elements (i.e., paths) by
\begin{equation}
\label{pathmult}
\mu\left(\phi_1\cdots\phi_l, \psi_1 \cdots \psi_m\right) = \delta_{t(\phi_l),s(\psi_1)}\phi_1\cdots\phi_l\psi_1 \cdots \psi_m.
\end{equation}
In other words, the multiplication on paths is given by the concatenated path if the target of the first path is equal to the source of the second path.  Otherwise, it is $0$.  According to (\cite{aguiar} Example 2.3.2), the path algebra $\bk Q$ of a quiver $Q$ is an $\epsilon$-bialgebra whose comultiplication $\Delta$ is determined by:
\begin{equation}
\label{pathcomult}
\begin{split}
\Delta(v) &= 0 \quad \text{for $v \in Q_0$},\\
\Delta(\phi) &= s(\phi) \otimes t(\phi) \quad \text{for $\phi \in Q_1$, and}\\
\Delta(\phi_1\cdots\phi_l) &= s(\phi_1) \otimes \phi_2 \cdots \phi_l + \phi_1 \cdots \phi_{l-1} \otimes t(\phi_l) + \sum_{i=1}^{l-2} \phi_1\cdots\phi_i \otimes \phi_{i+2}\cdots \phi_l
\end{split}
\end{equation}
for $\phi_1\cdots\phi_l \in Q_l$ with $l \geq 2$.  We refer to this $\epsilon$-bialgebra as the \textbf{path $\epsilon$-bialgebra} of the quiver $Q$.

To use the First Twisting Principle (Theorem \ref{thm:firsttp}) on the path $\epsilon$-bialgebra of a quiver $Q$, we need to be able to construct $\epsilon$-bialgebra morphisms on $\bk Q$.  Such maps can be constructed from suitable morphisms on the quiver $Q$, as we now discuss.

Let $Q$ and $Q'$ be two quivers.  A \textbf{quiver morphism} $\alpha \colon Q \to Q'$ consists of two maps $\alpha_i \colon Q_i \to Q_i'$ $(i = 0,1)$ such that (i) $s(\alpha_1(\phi)) = \alpha_0(s(\phi))$ and $t(\alpha_1(\phi)) = \alpha_0(t(\phi))$ for all $\phi \in Q_1$, and that (ii) $\alpha_0$ is injective.  In other words, a quiver morphism is a map of directed graphs that is injective on vertices and that preserves the sources and the targets of arrows.  In what follows, we will drop the subscripts and write both $\alpha_0$ and $\alpha_1$ as $\alpha$.

The main property of a quiver morphism is that it induces an $\epsilon$-bialgebra morphism on the path $\epsilon$-bialgebras.

\begin{theorem}
\label{thm:quiver}
Let $\alpha \colon Q \to Q'$ be a quiver morphism.  Then there exists a unique morphism of $\epsilon$-bialgebras $\alpha_* \colon \bk Q \to \bk Q'$ such that
\begin{enumerate}
\item
$\alpha_*|_{Q_i} = \alpha$ for $i=0,1$, and
\item
$\alpha_*(\phi_1\cdots\phi_l) = \alpha(\phi_1)\cdots\alpha(\phi_l)$ for $l \geq 2$ and $\phi_1\cdots\phi_l \in Q_l$.
\end{enumerate}
\end{theorem}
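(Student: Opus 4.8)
The plan is to define $\alpha_*$ directly on the $\bk$-basis of $\bk Q$ given by paths, using conditions (1) and (2) as the definition, extend $\bk$-linearly, and then check that the resulting linear map is both a morphism of associative algebras and a morphism of coassociative coalgebras. Uniqueness is then immediate: paths form a basis of $\bk Q$, and (1)--(2) prescribe the value of $\alpha_*$ on every basis element, so any $\epsilon$-bialgebra morphism satisfying (1)--(2) must coincide with $\alpha_*$.

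First I would check that the definition even makes sense, i.e., that $\alpha(\phi_1)\cdots\alpha(\phi_l)$ is genuinely a path of length $l$ in $Q'$. This is where the defining property of a quiver morphism, $s(\alpha(\phi)) = \alpha(s(\phi))$ and $t(\alpha(\phi)) = \alpha(t(\phi))$, enters: from $t(\phi_i) = s(\phi_{i+1})$ we get $t(\alpha(\phi_i)) = \alpha(t(\phi_i)) = \alpha(s(\phi_{i+1})) = s(\alpha(\phi_{i+1}))$, so the arrows $\alpha(\phi_1),\dots,\alpha(\phi_l)$ are composable. Hence $\alpha_*$ is a well-defined $\bk$-linear map sending length-$l$ paths to length-$l$ paths.

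Next I would verify that $\alpha_*$ is a morphism of associative algebras. For two basis paths $p = \phi_1\cdots\phi_l$ and $q = \psi_1\cdots\psi_m$, the product \eqref{pathmult} is $\delta_{t(\phi_l),s(\psi_1)}$ times the concatenation, so applying $\alpha_*$ and then comparing with $\mu\bigl(\alpha_*(p),\alpha_*(q)\bigr) = \delta_{t(\alpha(\phi_l)),s(\alpha(\psi_1))}\,\alpha(\phi_1)\cdots\alpha(\psi_m)$, the only thing to check is that the two Kronecker deltas agree. They do, because $t(\alpha(\phi_l)) = \alpha(t(\phi_l))$, $s(\alpha(\psi_1)) = \alpha(s(\psi_1))$, and the injectivity of $\alpha_0$ (part of the definition of a quiver morphism) forces $\alpha(t(\phi_l)) = \alpha(s(\psi_1))$ if and only if $t(\phi_l) = s(\psi_1)$. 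I expect this to be the one place where a hypothesis is used in a non-purely-substitutional way; the degenerate cases where $p$ or $q$ has length $0$ or $1$ are covered by the same computation once \eqref{pathmult} is read for paths of arbitrary length.

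Finally I would verify the coalgebra compatibility $\Delta' \circ \alpha_* = (\alpha_* \otimes \alpha_*) \circ \Delta$ on each basis path, by cases on the length $l$: for $l = 0$ both sides vanish; for $l = 1$ both sides equal $\alpha(s(\phi)) \otimes \alpha(t(\phi))$ using that $\alpha$ commutes with $s$ and $t$; for $l \geq 2$ one applies \eqref{pathcomult} to the path $\alpha(\phi_1)\cdots\alpha(\phi_l)$ on the left and to $\phi_1\cdots\phi_l$ on the right, then observes that $\alpha_*$ applied to each sub-path $\phi_j\cdots\phi_k$ appearing on the right-hand side equals $\alpha(\phi_j)\cdots\alpha(\phi_k)$ (by condition (2) if the sub-path has length $\geq 2$, by condition (1) if it is a vertex or single arrow), so the three groups of terms match termwise. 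Since an $\epsilon$-bialgebra has twisting map $Id$, commuting with twisting maps is automatic, and so $\alpha_*$ is a morphism of $\epsilon$-bialgebras. The main obstacle is purely organizational: keeping track of the short-path edge cases ($l = 0,1,2$) so that the termwise matching in \eqref{pathcomult} is genuinely valid there as well; there is no conceptual difficulty.
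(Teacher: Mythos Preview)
Your proposal is correct and follows essentially the same approach as the paper: define $\alpha_*$ on the basis of paths via (1)--(2), observe uniqueness from the basis, check well-definedness and the algebra morphism property using that $\alpha$ preserves source/target and is injective on vertices, and verify the coalgebra compatibility by matching the terms of \eqref{pathcomult} after applying $\alpha_*$. Your treatment is in fact more explicit than the paper's about the case analysis for small $l$ and about exactly where the injectivity hypothesis is invoked.
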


\begin{proof}
The uniqueness part is clear because $\phi_1\cdots\phi_l \in Q_l$ is the $l$-fold product of $\phi_1, \ldots , \phi_l \in Q_1$.

For the existence part, first note that $\alpha_*$ as defined in the Theorem is a well-defined linear map because $\alpha$ preserves the source and the target.  For the same reason and the assumption that $\alpha$ is injective on vertices, it follows that $\alpha_*$ is compatible with the multiplications \eqref{pathmult} in $\bk Q$ and $\bk Q'$.  To see that $\alpha_*$ is compatible with the comultiplications \eqref{pathcomult}, observe that for $\phi_1\cdots\phi_l \in Q_l$, both $\alpha_*^{\otimes 2}(\Delta(\phi_1\cdots\phi_l))$ and $\Delta(\alpha_*(\phi_1\cdots\phi_l))$ are equal to
\begin{multline*}
s(\alpha(\phi_1)) \otimes \alpha(\phi_2)\cdots\alpha(\phi_l) + \alpha(\phi_1)\cdots\alpha(\phi_{l-1}) \otimes t(\alpha(\phi_l))\\
+ \sum_{i=1}^{l-2} \alpha(\phi_1)\cdots\alpha(\phi_i) \otimes \alpha(\phi_{i+2})\cdots\phi(\phi_l)
\end{multline*}
by the definition of $\alpha_*|_{Q_l}$ and the assumption that $\alpha$ preserves the source and the target.
\end{proof}

Combining the First Twisting Principle (Theorem \ref{thm:firsttp}) and Theorem \ref{thm:quiver}, we obtain an $\epsilon$-Hom-bialgebra from each quiver and a quiver self-morphism.

\begin{corollary}
\label{cor:quiver}
Let $Q$ be a quiver and $\alpha \colon Q \to Q$ be a quiver morphism.  Then there is an $\epsilon$-Hom-bialgebra
\[
\bk Q_{\alpha_*} = (\bk Q, \mu_{\alpha_*}, \Delta_{\alpha_*}, \alpha_*),
\]
where $\alpha_* \colon \bk Q \to \bk Q$ is the induced $\epsilon$-bialgebra morphism as in Theorem \ref{thm:quiver}, $\mu_{\alpha_*} = \alpha_* \circ \mu$, and $\Delta_{\alpha_*} = \Delta \circ \alpha_*$.
\end{corollary}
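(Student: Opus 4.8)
The plan is to deduce Corollary \ref{cor:quiver} by combining two results already established in the excerpt: the First Twisting Principle (Theorem \ref{thm:firsttp}) and the construction of the induced $\epsilon$-bialgebra morphism $\alpha_* \colon \bk Q \to \bk Q$ from a quiver morphism $\alpha \colon Q \to Q$ (Theorem \ref{thm:quiver}). Concretely, the argument is a one-line composition of black boxes, so the real content is checking that the hypotheses of each theorem are genuinely met and then reading off the resulting structure maps.

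First I would invoke Theorem \ref{thm:quiver} with $Q' = Q$: since $\alpha \colon Q \to Q$ is a quiver morphism, that theorem produces a morphism of $\epsilon$-bialgebras $\alpha_* \colon \bk Q \to \bk Q$, uniquely determined by $\alpha_*|_{Q_i} = \alpha$ for $i = 0,1$ and $\alpha_*(\phi_1 \cdots \phi_l) = \alpha(\phi_1) \cdots \alpha(\phi_l)$ for $l \geq 2$. In particular $(\bk Q, \mu, \Delta)$ is an $\epsilon$-bialgebra (this is Aguiar's result, recalled via \eqref{pathmult} and \eqref{pathcomult} in Subsection \ref{subsec:quivers}) and $\alpha_*$ is a self-morphism of this $\epsilon$-bialgebra. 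Next I would apply the First Twisting Principle (Theorem \ref{thm:firsttp}) to the $\epsilon$-bialgebra $A = \bk Q$ together with the self-morphism $\alpha = \alpha_*$; the theorem immediately yields the $\epsilon$-Hom-bialgebra
\[
(\bk Q)_{\alpha_*} = (\bk Q, \mu_{\alpha_*}, \Delta_{\alpha_*}, \alpha_*)
\]
with $\mu_{\alpha_*} = \alpha_* \circ \mu$ and $\Delta_{\alpha_*} = \Delta \circ \alpha_*$, which is exactly the asserted structure.

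There is no real obstacle here, since both ingredients are already proved; the only thing to be slightly careful about is the bookkeeping of \emph{which} self-morphism feeds into Theorem \ref{thm:firsttp}. One plays $\alpha$ at the level of quivers to manufacture $\alpha_*$ at the level of path algebras via Theorem \ref{thm:quiver}, and it is $\alpha_*$ — not $\alpha$ itself, which does not even act on $\bk Q$ directly in a canonical way beyond basis elements — that serves as the twisting self-morphism of the $\epsilon$-bialgebra. Once this is said, the conclusion follows and nothing further needs to be computed.
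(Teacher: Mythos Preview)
Your proposal is correct and matches the paper's approach exactly: the corollary is stated immediately after the sentence ``Combining the First Twisting Principle (Theorem \ref{thm:firsttp}) and Theorem \ref{thm:quiver}, we obtain an $\epsilon$-Hom-bialgebra from each quiver and a quiver self-morphism,'' with no further proof given. Your careful note about feeding $\alpha_*$ (not $\alpha$) into Theorem \ref{thm:firsttp} is the only point worth making explicit, and you have done so.
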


Let us illustrate Corollary \ref{cor:quiver} with some examples.  To simplify the notations, we will use the same symbol for both a quiver morphism and its induced $\epsilon$-bialgebra morphism.

\begin{example}[\textbf{Kronecker $\epsilon$-Hom-bialgebras}]
\label{ex:kronecker}
In this example, we apply Corollary \ref{cor:quiver} to the \emph{Kronecker quiver} $Q$
\[
\nicearrow
\xymatrix{
v_0 \ar@/^/[rr]^{\phi_1} \ar@/_/[rr]_{\phi_2} & & v_1
}
\]
with two vertices $v_i$ $(i=0,1)$ and two arrows $\phi_j$ $(j=1,2)$, both with source $v_0$ and target $v_1$.  Its path algebra is called the \emph{Kronecker algebra} and is isomorphic to the algebra of matrices
\[
\bk Q \cong \begin{bmatrix} \bk & 0\\ \bk\oplus\bk & \bk\end{bmatrix}.
\]
One way to see this is through the identification
\[
v_0 = \begin{bmatrix}0 & 0 \\ 0 & 1\end{bmatrix},\quad
v_1 = \begin{bmatrix}1 & 0 \\ 0 & 0\end{bmatrix},\quad
\phi_1 = \begin{bmatrix}0 & 0 \\ (1,0) & 0\end{bmatrix},\quad
\phi_2 = \begin{bmatrix}0 & 0 \\ (0,1) & 0\end{bmatrix}.
\]
These four elements form a basis of the path algebra $\bk Q$ because the Kronecker quiver $Q$ does not have any paths of length $\geq 2$.  The multiplication $\mu$ in the path algebra $\bk Q$ is determined by
\[
v_i^2 = v_i,\quad
v_0\phi_j = \phi_j = \phi_jv_1
\]
for $i=0,1$ and $j=1,2$; all other binary products of these basis elements are $0$.  See, e.g., (\cite{ass} page 52, Example 1.13(b)) for details.  As an $\epsilon$-bialgebra, the comultiplication in $\bk Q$ is determined by the first two lines in \eqref{pathcomult}.  Namely, for $a,b,c,d \in \bk$, we have
\begin{equation}
\label{krdelta}
\begin{split}
\Delta\krmatrix &= a\Delta\vone + b\Delta\vzero + c\Delta\phione + d\Delta\phitwo\\
&= 0 + 0 + c\vzero \otimes \vone + d\vzero \otimes \vone\\
&= (c+d)\vzero \otimes \vone.
\end{split}
\end{equation}
In particular, $\Delta$ is determined by $c+d$.

It follows from the definition of a quiver morphism that there are exactly three non-identity quiver morphisms on the Kronecker quiver $Q$, all of which have to fix both $v_i$ $(i=0,1)$.  On arrows these three morphisms are:
\begin{enumerate}
\item
$\alpha_t(\phi_j) = \phi_1$ for $j=1,2$ (where $t$ stands for \emph{top});
\item
$\alpha_b(\phi_j) = \phi_2$ for $j=1,2$ (where $b$ stands for \emph{bottom});
\item
$\alpha_p(\phi_1) = \phi_2$ and $\alpha_p(\phi_2) = \phi_1$ (where $p$ stands for \emph{permute}).
\end{enumerate}
Their induced $\epsilon$-bialgebra morphisms on $\bk Q$ are given by
\[
\alpha_t\krmatrix = \kralphat,\quad
\alpha_b\krmatrix = \kralphab,\quad
\alpha_p\krmatrix = \kralphap.
\]
By Corollary \ref{cor:quiver}, for each quiver morphism $\alpha \in \{\alpha_t,\alpha_b,\alpha_p\}$, there is a corresponding $\epsilon$-Hom-bialgebra
\[
\bk Q_{\alpha} = (\bk Q,\mualpha = \alpha \circ \mu, \Deltaalpha = \Delta \circ \alpha, \alpha).
\]
It follows from \eqref{krdelta} that in each of the three cases, we have $\Deltaalpha = \Delta$.

Note that the first derived $\epsilon$-Hom-bialgebra of $\bk Q_{\alpha_p}$ (as in the Second Twisting Principle, Theorem \ref{thm:secondtp}) is
\[
(\bk Q_{\alpha_p})^1 = (\bk Q_{\alpha_p},\alpha_p^2 \circ \mu,\Delta \circ \alpha_p^2,\alpha_p^2).
\]
Since $\alpha_p^2 = Id \colon \bk Q \to \bk Q$, it follows that $(\bk Q_{\alpha_p})^1 = \bk Q$, the original path $\epsilon$-bialgebra of the Kronecker quiver.
\qed
\end{example}

\begin{example}[\textbf{Lower-triangular matrix $\epsilon$-Hom-bialgebra}]
\label{ex:triangular}
In this example, we apply Corollary \ref{cor:quiver} to the quiver $Q$
\[
\nicearrow
\xymatrix{
v_2 \ar[r]^-{\phi_1} & v_1\\
v_3 \ar[ur]_-{\phi_2} &
}
\]
with three vertices and two arrows as indicated.  Its path algebra is isomorphic to a subalgebra of the full lower-triangular matrix algebra, namely,
\[
\bk Q \cong \triangular.
\]
One way to see this is through the following identification.  Let $E_{ij}$ be the $3$-by-$3$ matrix with $1$ in the $(i,j)$-entry and $0$ everywhere else.  Then $v_i = E_{ii}$ for $i = 1,2,3$ and $\phi_j = E_{j+1,1}$ for $j=1,2$.   As in Example \ref{ex:kronecker}, the quiver $Q$ does not have any paths of length $\geq 2$.  The multiplication $\mu$ in $\bk Q$ is determined by
\[
v_i^2 = v_i,\quad \phi_j v_1 = \phi_j,\quad v_2\phi_1 = \phi_1, \quad v_3\phi_2 = \phi_2
\]
for $i=1,2,3$ and $j=1,2$; all other binary products of these basis elements are $0$.  See, e.g., (\cite{ass} page 45, Example (d)) for details.  By a computation similar to \eqref{krdelta} in Example \ref{ex:kronecker}, the comultiplication $\Delta$ in the path $\epsilon$-bialgebra $\bk Q$ is given by
\[
\Delta\qmatrix = \qmatrixxy \otimes \eone
\]
for $a,b,c,d,e \in \bk$.

There is only one non-identity quiver morphism $\alpha \colon Q \to Q$, which is given by
\[
\alpha(v_1) = v_1,\quad \alpha(v_2) = v_3,\quad \alpha(v_3) = v_2, \quad \alpha(\phi_1) = \phi_2, \quad \alpha(\phi_2) = \phi_1.
\]
Its induced $\epsilon$-bialgebra morphism on $\bk Q$ is given by
\[
\alpha \qmatrix = \qmatrixalpha.
\]
By Corollary \ref{cor:quiver}, there is a corresponding $\epsilon$-Hom-bialgebra
\[
\bk Q_{\alpha} = (\bk Q,\mualpha = \alpha \circ \mu, \Deltaalpha = \Delta \circ \alpha, \alpha).
\]
The twisted comultiplication $\Deltaalpha$ is given by
\[
\Deltaalpha \qmatrix = \qmatrixyx \otimes \eone.
\]

Observe that the first derived $\epsilon$-Hom-bialgebra of $\bk Q_\alpha$ (as in Theorem \ref{thm:secondtp}) is
\[
(\bk Q_\alpha)^1 = (\bk Q,\alpha^2 \circ \mu, \Delta \circ \alpha^2, \alpha^2).
\]
Since $\alpha^2 = Id$ on $\bk Q$, it follows that $(\bk Q_\alpha)^1$ coincides with the original path $\epsilon$-bialgebra $\bk Q$.
\qed
\end{example}

\section{Coboundary $\epsilon$-Hom-bialgebras}
\label{sec:cobound}

In this section we study a sub-class of $\epsilon$-Hom-bialgebras (Definition \ref{def:homas}) called coboundary $\epsilon$-Hom-bialgebras, which are the Hom-type analogs of Aguiar's coboundary $\epsilon$-bialgebras \cite{aguiar,aguiar2}.  The motivation for studying coboundary $\epsilon$-Hom-bialgebras is that, in an $\epsilon$-Hom-bialgebra, the comultiplication $\Delta$ is a $1$-cocycle in Hom-associative algebra cohomology (Remark \ref{rk:cocycle}).  The suitably-defined $1$-coboundaries form an obvious sub-class of the $1$-cocycles, so it makes sense to consider $\epsilon$-Hom-bialgebras whose comultiplications are $1$-coboundaries.  We prove the two Twisting Principles for coboundary $\epsilon$-Hom-bialgebras and characterize them in terms of the $A$-invariance of a certain $3$-tensor $A(r)$.  The element $A(r)$ is what defines the associative Hom-Yang-Baxter equation, which we study in section  \ref{sec:ahybe}

For a Hom-associative algebra $(A,\mu,\alpha)$, $a \in A$, and $b = \sum b_1 \otimes b_2 \in A^{\otimes 2}$, we use the notations:
\begin{equation}
\label{staraction}
\begin{split}
a * b &= \mu(a,b_1) \otimes \alpha(b_2),\\
b * a &= \alpha(b_1) \otimes \mu(b_2,a),\\
[a, b]_* &= a * b - b * a.
\end{split}
\end{equation}
In particular, we can consider the linear map $[-,b]_* \colon A \to A^{\otimes 2}$ given by $[-,b]_*(a) = [a,b]_*$.  An element in $A^{\otimes 2}$ is said to be \textbf{$\alpha$-invariant} if it is fixed by $\alpha^{\otimes 2}$.

\begin{definition}
\label{def:cobound}
A \textbf{coboundary $\epsilon$-Hom-bialgebra} $(A,\mu,\Delta,\alpha,r)$ consists of an $\epsilon$-Hom-bialgebra $(A,\mu,\Delta,\alpha)$ and an $\alpha$-invariant element $r \in A^{\otimes 2}$ such that $\Delta = [-,r]_*$. A \textbf{coboundary $\epsilon$-bialgebra} is a coboundary $\epsilon$-Hom-bialgebra with $\alpha = Id$.
\end{definition}

To be more explicit, if $r = \sum u_i \otimes v_i$, then the comultiplication $\Delta = [-,r]_*$ is given by
\begin{equation}
\label{starbracketa}
\Delta(a) = [a,r]_* = \sum \mu(a,u_i) \otimes \alpha(v_i) - \alpha(u_i) \otimes \mu(v_i,a)
\end{equation}
for $a \in A$.  In particular, a coboundary $\epsilon$-bialgebra $(A,\mu,\Delta,r)$ consists of an $\epsilon$-bialgebra $(A,\mu,\Delta)$ and a $2$-tensor $r \in A^{\otimes 2}$ such that
\[
\Delta(a) = [a,r] = ar - ra = \sum au_i \otimes v_i - u_i \otimes v_ia.
\]
In other words, in a coboundary $\epsilon$-bialgebra, the comultiplication $\Delta = [-,r]$ is the principal derivation induced by $r$.

The following two results are the two Twisting Principles for coboundary $\epsilon$-Hom-bialgebras, corresponding to Theorems \ref{thm:firsttp} and \ref{thm:secondtp}, respectively.  The first result shows that coboundary $\epsilon$-bialgebras can be twisted into coboundary $\epsilon$-Hom-bialgebras along suitable algebra morphisms.

\begin{theorem}
\label{thm:coboundtp}
Let $(A,\mu,\Delta=[-,r],r)$ be a coboundary $\epsilon$-bialgebra and $\alpha \colon A \to A$ be an algebra morphism such that $\alpha^{\otimes 2}(r) = r$. Then
\[
A_\alpha = (A,\mu_\alpha,\Delta_\alpha,\alpha,r)
\]
is a coboundary $\epsilon$-Hom-bialgebra, where $\mualpha = \alpha \circ \mu$ and $\Deltaalpha = \Delta \circ \alpha$.
\end{theorem}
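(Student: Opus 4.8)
The plan is to bootstrap off the First Twisting Principle (Theorem~\ref{thm:firsttp}) and then verify the one extra piece of data, namely that the twisted comultiplication is still the $*$-bracket with $r$. There is a single subtlety: Theorem~\ref{thm:firsttp} requires $\alpha$ to be a morphism of $\epsilon$-bialgebras, whereas the hypothesis here only gives that $\alpha$ is an algebra morphism with $\alpha^{\otimes 2}(r) = r$. So the first task is to upgrade $\alpha$ to an $\epsilon$-bialgebra morphism, and it is precisely the $\alpha$-invariance of $r$ that makes this possible.

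\textbf{Step 1 (upgrade $\alpha$).} Write $r = \sum u_i \otimes v_i$. Using $\Delta = [-,r]$ and the $A$-bimodule actions \eqref{a2bimodule}, for $a \in A$ one computes $\Delta(\alpha(a)) = \sum \alpha(a)u_i \otimes v_i - u_i \otimes v_i\alpha(a)$, while $\alpha^{\otimes 2}(\Delta(a)) = \sum \alpha(a)\alpha(u_i) \otimes \alpha(v_i) - \alpha(u_i) \otimes \alpha(v_i)\alpha(a)$ since $\alpha$ is multiplicative. The assumption $\alpha^{\otimes 2}(r) = r$, i.e.\ $\sum \alpha(u_i) \otimes \alpha(v_i) = \sum u_i \otimes v_i$, identifies these two expressions, so $\Delta \circ \alpha = \alpha^{\otimes 2} \circ \Delta$ and $\alpha$ is a morphism of $\epsilon$-bialgebras. \textbf{Step 2 (apply Theorem~\ref{thm:firsttp}).} By Step~1 and the First Twisting Principle, $A_\alpha = (A,\mualpha,\Deltaalpha,\alpha)$ is an $\epsilon$-Hom-bialgebra, where $\mualpha = \alpha\circ\mu$ and $\Deltaalpha = \Delta\circ\alpha = \alpha^{\otimes 2}\circ\Delta$.

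\textbf{Step 3 (coboundary condition in $A_\alpha$).} It remains to check that $\Deltaalpha = [-,r]_*$, where the $*$-operations are now formed from the Hom-associative structure $(\mualpha,\alpha)$ as in \eqref{starbracketa}, and that $r$ is $\alpha$-invariant in $A_\alpha$ (which is the given hypothesis). For $a \in A$, using $\mualpha = \alpha\circ\mu$,
\[
[a,r]_* = \sum \mualpha(a,u_i) \otimes \alpha(v_i) - \alpha(u_i) \otimes \mualpha(v_i,a) = \alpha^{\otimes 2}\left(\sum \mu(a,u_i) \otimes v_i - u_i \otimes \mu(v_i,a)\right) = \alpha^{\otimes 2}(\Delta(a)) = \Deltaalpha(a),
\]
where the last equality is the identity from Step~2. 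Hence $(A,\mualpha,\Deltaalpha,\alpha,r)$ satisfies Definition~\ref{def:cobound} and is a coboundary $\epsilon$-Hom-bialgebra.

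Everything after Step~1 is routine bookkeeping with the definitions of $*$ and of $\mualpha,\Deltaalpha$; the only place demanding attention is Step~1, since ``algebra morphism'' is strictly weaker than ``$\epsilon$-bialgebra morphism'' and that gap must be closed before Theorem~\ref{thm:firsttp} can be invoked — but $\alpha^{\otimes 2}(r)=r$ is exactly what closes it. (If one prefers, Step~1 can alternatively be phrased via Proposition~\ref{prop:morphism} when a convenient generating set is at hand, but the direct verification above needs no such hypothesis.)
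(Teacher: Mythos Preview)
Your proof is correct and follows essentially the same route as the paper: first use $\alpha^{\otimes 2}(r)=r$ together with multiplicativity of $\alpha$ to upgrade $\alpha$ to an $\epsilon$-bialgebra morphism, then invoke Theorem~\ref{thm:firsttp}, and finally verify $\Deltaalpha=[-,r]_*$ by the same short computation. The only cosmetic difference is that the paper runs Step~3 starting from $\Deltaalpha(a)=\alpha^{\otimes 2}(\Delta(a))$ and arriving at $[a,r]_*$, whereas you go in the reverse direction.
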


\begin{proof}
It follows from the multiplicativity of $\alpha$ with respect to $\mu$ and the $\alpha$-invariance of $r$ that $\alpha^{\otimes 2} \circ \Delta = \Delta \circ \alpha$.  In other words, $\alpha$ is an $\epsilon$-bialgebra morphism. By Theorem \ref{thm:firsttp} we already know that $A_\alpha$ is an $\epsilon$-Hom-bialgebra.  It remains to show that $\Deltaalpha$ has the form $[-,r]_*$ (with $\mualpha$ instead of $\mu$).  Write $r = \sum u_i \otimes v_i$.  For any element $a \in A$, we have
\[
\begin{split}
\Deltaalpha(a) &= \alpha^{\otimes 2}(\Delta(a))\\
&= \alpha(\mu(a,u_i)) \otimes \alpha(v_i) - \alpha(u_i) \otimes \alpha(\mu(v_i,a))\\
&= \mualpha(a,u_i) \otimes \alpha(v_i) - \alpha(u_i) \otimes \mualpha(v_i,a). \end{split}
\]
This proves that $\Deltaalpha = [-,r]_*$ in $A_\alpha$.
\end{proof}

The following result is the Second Twisting Principle for coboundary $\epsilon$-Hom-bialgebras.  It shows that an arbitrary coboundary $\epsilon$-Hom-bialgebra gives rise to a derived sequence of coboundary $\epsilon$-Hom-bialgebras.

\begin{theorem}
\label{thm:coboundtp2}
Let $(A,\mu,\Delta,\alpha,r)$ be a coboundary $\epsilon$-Hom-bialgebra.  Then so is
\[
A^n = (A,\mun,\Deltan,\alpha^{2^n},r)
\]
for each $n \geq 0$, where $\mun = \alpha^{2^n-1}\circ\mu$ and $\Deltan = \Delta \circ \alpha^{2^n-1}$.
\end{theorem}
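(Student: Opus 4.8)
The plan is to lean on the Second Twisting Principle for ordinary $\epsilon$-Hom-bialgebras (Theorem \ref{thm:secondtp}), which already tells us that $A^n = (A,\mun,\Deltan,\alpha^{2^n})$ is an $\epsilon$-Hom-bialgebra.  Hence the only extra things to check are that $r$ is $\alpha^{2^n}$-invariant and that $\Deltan$ equals the principal derivation $[-,r]_*$ formed with the twisted multiplication $\mun$ and twisting map $\alpha^{2^n}$ (as in \eqref{starbracketa}).  The first point is immediate: from $\alpha^{\otimes 2}(r) = r$ we get $(\alpha^k)^{\otimes 2}(r) = (\alpha^{\otimes 2})^k(r) = r$ for every $k \geq 0$, so $r$ is fixed by $(\alpha^{2^n})^{\otimes 2}$ and also by $(\alpha^{2^n-1})^{\otimes 2}$.

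For the second point, write $r = \sum u_i \otimes v_i$ and expand $\Deltan(a) = \Delta(\alpha^{2^n-1}(a))$ using $\Delta = [-,r]_*$ in $A$.  Then I would use the invariance relation $r = (\alpha^{2^n-1})^{\otimes 2}(r)$ to replace each leg $u_i \otimes v_i$ by $\alpha^{2^n-1}(u_i) \otimes \alpha^{2^n-1}(v_i)$, and collapse the resulting nested compositions of $\mu$ and $\alpha$ via multiplicativity $\alpha \circ \mu = \mu \circ \alpha^{\otimes 2}$ (applied $2^n-1$ times).  The upshot is that $\Deltan(a)$ becomes exactly $\sum \mun(a,u_i) \otimes \alpha^{2^n}(v_i) - \alpha^{2^n}(u_i) \otimes \mun(v_i,a)$, i.e., $[a,r]_*$ computed in $A^n$.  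Alternatively, one can first reduce to $n = 1$ by the identity $A^{n+1} = (A^n)^1$ and induction, exactly as in Theorem \ref{thm:secondtp}; the case $n=1$ is the same computation with $\alpha^{2^n-1}$ replaced by $\alpha$.

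The argument has no real obstacle beyond careful bookkeeping of the twisting maps: the operations $a * b$ and $b * a$ in \eqref{staraction} are defined differently in each derived algebra $A^n$ (they carry $\alpha^{2^n}$ rather than $\alpha$), so one must make sure that the $\alpha^{2^n}$ appearing in the $A^n$-version of $[a,r]_*$ is correctly assembled from the single leftover $\alpha$ in the original star operation together with the $\alpha^{2^n-1}$ absorbed into $r$.  It is precisely the $\alpha$-invariance of $r$ that allows this reassembly; without it the two expressions would not match.
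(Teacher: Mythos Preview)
Your proposal is correct and follows essentially the same route as the paper: invoke Theorem~\ref{thm:secondtp} for the $\epsilon$-Hom-bialgebra structure, note that $\alpha^{\otimes 2}(r)=r$ forces $(\alpha^{2^n})^{\otimes 2}(r)=r$, and then verify $\Deltan=[-,r]_*$ in $A^n$ by expanding $\Delta(\alpha^{2^n-1}(a))$, inserting $r=(\alpha^{2^n-1})^{\otimes 2}(r)$, and using multiplicativity of $\alpha$ to assemble $\mun$ and $\alpha^{2^n}$. The paper carries out exactly this direct computation for arbitrary $n$ (rather than first reducing to $n=1$), but the argument is the same.
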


\begin{proof}
By Theorem \ref{thm:secondtp} we already know that $A^n$ is an $\epsilon$-Hom-bialgebra.  Moreover, since $r$ is fixed by $\alpha^{\otimes 2}$, it is also fixed by $(\alpha^{2^n})^{\otimes 2}$.  Thus, it remains to show that $\Deltan$ has the form $[-,r]_*$ (with $\mun$ and $\alpha^{2^n}$ instead of $\mu$ and $\alpha$). Writing $r = \sum u_i \otimes v_i$, for an arbitrary element $a \in A$, we have:
\[
\begin{split}
\Deltan(a) &= \Delta(\alpha^{2^n-1}(a))\\
&= \mu(\alpha^{2^n-1}(a),u_i) \otimes \alpha(v_i) - \alpha(u_i) \otimes \mu(v_i,\alpha^{2^n-1}(a))\\
&= \mu(\alpha^{2^n-1}(a),\alpha^{2^n-1}(u_i)) \otimes \alpha^{2^n}(v_i) - \alpha^{2^n}(u_i) \otimes \mu(\alpha^{2^n-1}(v_i),\alpha^{2^n-1}(a))\\
&= \mun(a,u_i) \otimes \alpha^{2^n}(v_i) - \alpha^{2^n}(u_i) \otimes \mun(v_i,a).
\end{split}
\]
In the third equality above, we used the $\alpha$-invariance of $r$.  In the fourth equality, we used $\alpha\circ\mu = \mu\circ\alpha^{\otimes 2}$.  We have shown that $A^n$ is a coboundary $\epsilon$-Hom-bialgebra.
\end{proof}

In a coboundary $\epsilon$-Hom-bialgebra, the comultiplication $\Delta = [-,r]_*$ is determined by the $\alpha$-invariant $2$-tensor $r \in A^{\otimes 2}$.  It is therefore natural to ask the following question:
\begin{quote}
If $(A,\mu,\alpha)$ is a Hom-associative algebra and $r \in A^{\otimes 2}$ is $\alpha$-invariant, under what conditions is $(A,\mu,\Delta=[-,r]_*,\alpha,r)$ a coboundary $\epsilon$-Hom-bialgebra?
\end{quote}
To state the answer to this question, we need the following notations.  Let $(A,\mu,\alpha)$ be a Hom-associative algebra and $r = \sum u_i \otimes v_i, s = \sum u_j' \otimes v_j' \in A^{\otimes 2}$.  Define the $3$-tensors:
\begin{equation}
\label{ar}
\begin{split}
r_{13}s_{12} &= \sum \mu(u_i,u_j') \otimes \alpha(v_j') \otimes \alpha(v_i),\\
r_{12}s_{23} &= \sum \alpha(u_i) \otimes \mu(v_i,u_j') \otimes \alpha(v_j'),\\
r_{23}s_{13} &= \sum \alpha(u_j') \otimes \alpha(u_i) \otimes \mu(v_i,v_j'),\\
A(r) &= r_{13}r_{12} - r_{12}r_{23} + r_{23}r_{13}.
\end{split}
\end{equation}
Recall the $\bullet$-action of $A$ on $A^{\otimes n}$ defined in \eqref{dotaction}.  Define the $\bullet$-bracket
\begin{equation}
\label{dotbracket}
\begin{split}
[a,b]_\bullet &= a \bullet b - b \bullet a\\
&= \mu(\alpha(a),b_1) \otimes \alpha(b_2) \otimes \cdots \otimes \alpha(b_n) - \alpha(b_1) \otimes \cdots \otimes \alpha(b_{n-1}) \otimes \mu(b_n,\alpha(a))
\end{split}
\end{equation}
for $a \in A$ and $b = \sum b_1 \otimes \cdots \otimes b_n \in A^{\otimes n}$.  An $n$-tensor $b \in A^{\otimes n}$ is said to be \textbf{$A$-invariant} if $[a,b]_\bullet = 0$ for all $a \in A$.  The following result, which is a Hom-type generalization of (\cite{aguiar} Proposition 5.1), gives an answer to the question above.

\begin{theorem}
\label{thm:coboundchar}
Let $(A,\mu,\alpha)$ be a Hom-associative algebra and $r \in A^{\otimes 2}$ be an $\alpha$-invariant $2$-tensor.  Define the map $\Delta = [-,r]_* \colon A \to A^{\otimes 2}$ as in \eqref{staraction}.  Then $(A,\mu,\Delta,\alpha,r)$ is a coboundary $\epsilon$-Hom-bialgebra if and only if $A(r) \in A^{\otimes 3}$ is $A$-invariant.
\end{theorem}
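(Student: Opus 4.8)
The plan is to pin down which axioms of a coboundary $\epsilon$-Hom-bialgebra are not already guaranteed by the hypotheses and to reduce those to the $A$-invariance of $A(r)$. Since $(A,\mu,\alpha)$ is Hom-associative, $r$ is $\alpha$-invariant, and $\Delta = [-,r]_*$ by assumption, what remains is to verify that $(A,\Delta,\alpha)$ is a Hom-coassociative coalgebra and that the cocycle condition \eqref{cocycle} holds. First I would dispatch two of these as automatic. Comultiplicativity $\alpha^{\otimes 2} \circ \Delta = \Delta \circ \alpha$ follows by expanding $\Delta(a) = [a,r]_*$ via \eqref{starbracketa}, applying $\alpha^{\otimes 2}$, and using multiplicativity of $\alpha$ together with $\alpha^{\otimes 2}(r) = r$ to move the twistings into place. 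The cocycle condition \eqref{cocycle} holds for any $\Delta$ of the form $[-,r]_*$ with $r$ $\alpha$-invariant: expanding $\Delta(ab)$ and the right-hand side of \eqref{cocycle} via \eqref{starbracketa}, Hom-associativity rewrites $\mu(\alpha(a),\mu(b,u_i))$ as $\mu(\mu(a,b),\alpha(u_i))$ and $\mu(\mu(v_i,a),\alpha(b))$ as $\mu(\alpha(v_i),\mu(a,b))$, the two remaining inner terms cancel against one another, and $\alpha$-invariance of $r$ absorbs the surplus twistings. Thus the only axiom that is not automatic is Hom-coassociativity of $\Delta$.

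The core of the proof is then the identity: for every $a \in A$,
\[
(\Delta \otimes \alpha) \circ \Delta(a) - (\alpha \otimes \Delta) \circ \Delta(a) = [a, A(r)]_\bullet,
\]
where the right-hand side is the $\bullet$-bracket \eqref{dotbracket} on $A^{\otimes 3}$ and $A(r)$ is the $3$-tensor of \eqref{ar}. To establish it I would write $r = \sum_i u_i \otimes v_i$, use a second copy $\sum_j u_j \otimes v_j$ for the quadratic terms, and expand both $(\Delta \otimes \alpha) \circ \Delta(a)$ and $(\alpha \otimes \Delta) \circ \Delta(a)$ by applying \eqref{starbracketa} twice (using comultiplicativity to pull the inner twisting through $\Delta$). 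Each side breaks into four $3$-tensor monomials; I simplify each using Hom-associativity (to re-bracket triple products such as $\mu(\alpha(a),\mu(v_i,u_j))$) and $\alpha$-invariance of $r$ (to normalize how many $\alpha$'s sit on each $u$ and $v$). After relabelling summation indices, the monomials in which $a$ occupies the middle tensor factor cancel in pairs; the two surviving monomials with $a$ on the first factor reassemble, through the definitions of $r_{13}r_{12}$, $r_{12}r_{23}$, $r_{23}r_{13}$ in \eqref{ar}, into $a \bullet A(r)$, and the two with $a$ on the last factor into $A(r) \bullet a$, which is the displayed identity. (The overall sign is irrelevant below; this is the Hom-analogue of Aguiar's computation for \cite{aguiar} Proposition 5.1.)

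To finish, I would read off from the identity that Hom-coassociativity $(\Delta \otimes \alpha) \circ \Delta = (\alpha \otimes \Delta) \circ \Delta$ holds if and only if $[a, A(r)]_\bullet = 0$ for all $a \in A$, i.e. if and only if $A(r)$ is $A$-invariant in the sense of \eqref{dotbracket}; combined with the first paragraph this is exactly the claim. The main obstacle is the second paragraph: organizing the dozen or so $3$-tensor monomials and, for each one, invoking $\alpha$-invariance of $r$ in precisely the right way so that the bookkeeping collapses onto the prescribed shapes of $A(r)$ and of $[-,A(r)]_\bullet$ — and in particular verifying that the ``mixed'' middle-factor terms genuinely cancel, which is where Hom-associativity carries the load.
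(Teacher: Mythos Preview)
Your proposal is correct and follows essentially the same route as the paper: the paper likewise observes that comultiplicativity is automatic from $\alpha$-invariance of $r$, proves the cocycle condition \eqref{cocycle} for $\Delta=[-,r]_*$ as a separate lemma, and then establishes the identity $(\alpha\otimes\Delta)\circ\Delta-(\Delta\otimes\alpha)\circ\Delta=-[-,A(r)]_\bullet$ (equivalently your displayed identity) by the same two-step expansion and regrouping into the $r_{13}r_{12}$, $r_{12}r_{23}$, $r_{23}r_{13}$ pieces. One small correction to your bookkeeping sketch: after cancelling the two middle-factor monomials, there are \emph{three} surviving terms with $a$ in the first slot (assembling to $\pm a\bullet A(r)$) and \emph{three} with $a$ in the last slot (assembling to $\mp A(r)\bullet a$), not two of each, since $A(r)$ has three summands; this does not affect the argument.
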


\begin{proof}
First note that $(A,\mu,\Delta,\alpha,r)$ is a coboundary $\epsilon$-Hom-bialgebra if and only if (i) $(A,\Delta,\alpha)$ is a Hom-coassociative coalgebra \eqref{homcoassaxioms} and (ii) the cocycle condition \eqref{cocycle} is satisfied.  The comultiplicativity of $\alpha$ with respect to $\Delta = [-,r]_*$ is automatic because (with $r = \sum u_i \otimes v_i$)
\begin{equation}
\label{deltaalphacommute}
\begin{split}
\Delta(\alpha(a)) &= \alpha(a)u_i \otimes \alpha(v_i) - \alpha(u_i) \otimes v_i\alpha(a)\\
&= \alpha(a)\alpha(u_i) \otimes \alpha^2(v_i) - \alpha^2(u_i) \otimes \alpha(v_i)\alpha(a)\\
&= \alpha^{\otimes 2}(au_i \otimes \alpha(v_i) - \alpha(u_i) \otimes v_ia)\\
&= \alpha^{\otimes 2}(\Delta(a)).
\end{split}
\end{equation}
In the second equality above, we used the $\alpha$-invariance of $r$.

In Lemma \ref{lem:deltacocycle} below, we will show that $\Delta = [-,r]_*$ satisfies the cocycle condition \eqref{cocycle}, regardless of whether $A(r)$ is $A$-invariant or not.  Moreover, in Lemma \ref{lem:coboundchar} below, we will show that the Hom-coassociativity of $\Delta=[-,r]_*$ is equivalent to the $A$-invariance of the $3$-tensor $A(r)$.  Therefore, to finish the proof of the Theorem, it remains to prove Lemmas \ref{lem:deltacocycle} and \ref{lem:coboundchar}.
\end{proof}

\begin{lemma}
\label{lem:deltacocycle}
With the same hypotheses as in Theorem \ref{thm:coboundchar}, the cocycle condition \eqref{cocycle} is satisfied for $\Delta = [-,r]_*$.
\end{lemma}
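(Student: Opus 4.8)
The plan is to verify the cocycle condition \eqref{cocycle} directly by expanding both sides in terms of $r = \sum u_i \otimes v_i$, using the definition $\Delta = [-,r]_*$ from \eqref{staraction}, and then checking that everything cancels using only Hom-associativity, multiplicativity of $\alpha$, and the $\alpha$-invariance of $r$. Importantly, the claim makes no reference to $A(r)$, so I expect this to be a computation that works identically to the $\alpha = \mathrm{Id}$ case in \cite{aguiar}, just with twisting maps inserted in the right spots; no hypothesis beyond ``$(A,\mu,\alpha)$ is Hom-associative and $r$ is $\alpha$-invariant'' should be needed.

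Concretely, first I would write out $\Delta(ab) = [ab, r]_* = \sum \mu(\mu(a,b),u_i) \otimes \alpha(v_i) - \sum \alpha(u_i) \otimes \mu(v_i, \mu(a,b))$ for $a,b \in A$, using \eqref{starbracketa}. Next I would expand the right-hand side of \eqref{cocycle}, that is, $a \bullet \Delta(b) + \Delta(a) \bullet b$ in the notation of \eqref{cocycle''}: the first term is $\mu(\alpha(a), \mu(b,u_i)) \otimes \alpha^2(v_i) - \mu(\alpha(a),\alpha(u_i)) \otimes \alpha(\mu(v_i,b))$, and the second term is $\alpha(\mu(a,u_i)) \otimes \mu(\alpha(v_i), \alpha(b)) - \alpha^2(u_i) \otimes \mu(\mu(v_i,a),\alpha(b))$. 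Then I would reconcile the two sides termwise. The ``$a$ acts on the left of $u_i$'' terms and the ``$b$ acts on the right of $v_i$'' terms match after one application of Hom-associativity each (rewriting $\mu(\mu(a,b),u_i) = \mu(\alpha(a),\mu(b,\alpha^{-1}?...))$ — more precisely $\mu(\mu(a,b),u_i)$ versus $\mu(\alpha(a),\mu(b,u_i))$ requires $u_i = \alpha(\text{something})$, which is exactly where $\alpha$-invariance of $r$ enters: write $u_i = \alpha(u_i')$, $v_i = \alpha(v_i')$ with $\sum u_i' \otimes v_i' = r$). The two ``middle'' terms — the ones with $u_i$ adjacent to $v_i$ through $a$ or $b$ — must cancel against each other: $-\mu(\alpha(a),\alpha(u_i)) \otimes \alpha(\mu(v_i,b))$ from $a \bullet \Delta(b)$ should cancel $+\alpha(\mu(a,u_i)) \otimes \mu(\alpha(v_i),\alpha(b))$ from $\Delta(a) \bullet b$ after using multiplicativity $\alpha \circ \mu = \mu \circ \alpha^{\otimes 2}$, since both become $\mu(\alpha(a),\alpha(u_i)) \otimes \mu(\alpha(v_i),\alpha(b))$ up to sign.

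The main obstacle, such as it is, will be bookkeeping: making sure the powers of $\alpha$ on each tensor factor line up so that Hom-associativity (which only shifts $\alpha$ between adjacent arguments of a single $\mu$) and multiplicativity (which pulls $\alpha$ out of $\mu$) suffice to transform one side into the other. The $\alpha$-invariance of $r$ is essential precisely because Hom-associativity $\mu(\mu(a,b),c) = \mu(\alpha(a),\mu(b,c))$ needs the outer-left slot of one side to be a value of $\alpha$; absorbing $u_i$ or $v_i$ into $\alpha$ via $r = \alpha^{\otimes 2}(r)$ is what makes this applicable. I would organize the argument as: (1) rewrite $r = \alpha^{\otimes 2}(r)$, so each $u_i, v_i$ may be replaced by $\alpha(u_i), \alpha(v_i)$ freely; (2) expand $\Delta(ab)$ and apply Hom-associativity to each of its two terms; (3) expand $a\bullet\Delta(b) + \Delta(a)\bullet b$, apply multiplicativity to normalize $\alpha$'s; (4) match the four surviving terms on the right against the two-plus-two terms obtained in step (2), observing the two genuinely new terms cancel in pairs. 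Since Lemma~\ref{lem:deltacocycle} is stated with ``the same hypotheses as in Theorem \ref{thm:coboundchar}'', I will assume $(A,\mu,\alpha)$ Hom-associative and $r$ $\alpha$-invariant throughout, and I will phrase the computation in Sweedler-free element notation with the summation over $i$ suppressed as in the paper's convention.
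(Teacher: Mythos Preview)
Your plan is correct and follows essentially the same route as the paper: expand $\Delta(ab)$ using \eqref{starbracketa}, invoke the $\alpha$-invariance of $r$ to rewrite $(ab)u_i\otimes\alpha(v_i)$ as $(ab)\alpha(u_i)\otimes\alpha^2(v_i)$ so that Hom-associativity applies, and then observe that on the right-hand side the two ``middle'' terms $-\alpha(a)\alpha(u_i)\otimes\alpha(v_ib)$ and $\alpha(au_i)\otimes\alpha(v_i)\alpha(b)$ cancel by multiplicativity of $\alpha$, leaving exactly the two terms of $\Delta(ab)$. The paper's proof is organized identically; your step-by-step outline (1)--(4) is a faithful description of it.
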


\begin{proof}
For elements $a,b \in A$, the left-hand side of the cocycle condition (in the equivalent form \eqref{cocycle'}) is:
\begin{equation}
\label{deltaab}
\begin{split}
\Delta(ab) &= [ab,r]_*\\
&= (ab)u_i \otimes \alpha(v_i) - \alpha(u_i) \otimes v_i(ab)\\
&= (ab)\alpha(u_i) \otimes \alpha^2(v_i) - \alpha^2(u_i) \otimes \alpha(v_i)(ab)\\
&= \alpha(a)(bu_i) \otimes \alpha^2(v_i) - \alpha^2(u_i) \otimes (v_ia)\alpha(b).
\end{split}
\end{equation}
In the third and the fourth equalities above, we used the $\alpha$-invariance of $r$ and the Hom-associativity axiom \eqref{homassaxioms}, respectively.  On the other hand, we have
\[
\Delta(a) = \sum a_1 \otimes a_2 = \sum au_i \otimes \alpha(v_i) - \alpha(u_i) \otimes v_ia,
\]
and similarly for $\Delta(b) = \sum b_1 \otimes b_2$.  Therefore, the right-hand side of the cocycle condition \eqref{cocycle'} is:
\[
\begin{split}
\alpha(a)b_1 \otimes \alpha(b_2) + \alpha(a_1) \otimes a_2 \alpha(b) &=
\alpha(a)(bu_i) \otimes \alpha^2(v_i) - \alpha(a)\alpha(u_i) \otimes \alpha(v_ib)\\
&\relphantom{} + \alpha(au_i) \otimes \alpha(v_i)\alpha(b) - \alpha^2(u_i) \otimes (v_ia)\alpha(b).
\end{split}
\]
On the right-hand side, the second and the third terms cancel out by the multiplicativity of $\alpha$.  The sum of the other two terms is $\Delta(ab)$ \eqref{deltaab}.  We have shown that the cocycle condition \eqref{cocycle} holds.
\end{proof}

\begin{lemma}
\label{lem:coboundchar}
With the same hypotheses as in Theorem \ref{thm:coboundchar}, we have
\[
(\alpha \otimes \Delta) \circ \Delta - (\Delta \otimes \alpha) \circ \Delta = -[-,A(r)]_\bullet,
\]
where $[-,-]_\bullet$ is defined in \eqref{dotbracket}.  In particular, $\Delta = [-,r]_*$ is Hom-coassociative \eqref{homcoassaxioms} if and only if $A(r)$ is $A$-invariant.
\end{lemma}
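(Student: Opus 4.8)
The plan is to prove the displayed identity by direct expansion of both sides on an arbitrary element $a \in A$, and then to deduce the ``in particular'' clause.

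First I would write $r = \sum_i u_i \otimes v_i$ and introduce a second copy $r = \sum_j u_j' \otimes v_j'$ purely as a bookkeeping device. By definition $\Delta(a) = \sum_i(a u_i \otimes \alpha(v_i) - \alpha(u_i) \otimes v_i a)$. Applying $\alpha \otimes \Delta$ to this, and separately $\Delta \otimes \alpha$ to it, and expanding $\Delta$ in each slot, I obtain $(\alpha \otimes \Delta)\circ\Delta(a)$ and $(\Delta \otimes \alpha)\circ\Delta(a)$ each as an alternating sum of four $3$-tensors built from $a$, the $u_i,v_i$, and the $u_j',v_j'$; subtracting, the left-hand side of the identity becomes an explicit alternating sum of eight such $3$-tensors.

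Next I would expand the right-hand side. From the definition $A(r) = r_{13}r_{12} - r_{12}r_{23} + r_{23}r_{13}$ in \eqref{ar}, the $\bullet$-action \eqref{dotaction}, and the $\bullet$-bracket \eqref{dotbracket}, the element $[a,A(r)]_\bullet = a \bullet A(r) - A(r)\bullet a$ is an alternating sum of six $3$-tensors. I would then match $-[a,A(r)]_\bullet$ term-by-term against the eight-term left-hand side. The only ingredients needed are multiplicativity $\alpha\circ\mu = \mu\circ\alpha^{\otimes 2}$, Hom-associativity, and the $\alpha$-invariance of $r$; the last is used in two ways: to rewrite a summand $\sum u\otimes v$ as $\sum \alpha(u)\otimes\alpha(v)$ — which produces exactly the extra $\alpha$ needed so that Hom-associativity can reparenthesize a triple product — and, since the two ``copies'' of $r$ are literally the same tensor, to relabel the summation indices $i \leftrightarrow j$. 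Six of the eight left-hand terms then match the six terms of $-[a,A(r)]_\bullet$ one at a time, while the two leftover left-hand terms, namely (up to sign) $\alpha^2(u_i)\otimes\mu(\mu(v_i,a),u_j')\otimes\alpha(v_j')$ and $\alpha(u_j')\otimes\mu(v_j',\mu(a,u_i))\otimes\alpha^2(v_i)$, are shown to be equal after one more use of $\alpha$-invariance followed by Hom-associativity and an index relabeling, so they cancel.

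The ``in particular'' claim is then immediate: $\Delta = [-,r]_*$ is Hom-coassociative precisely when $(\alpha\otimes\Delta)\circ\Delta - (\Delta\otimes\alpha)\circ\Delta$ vanishes, and $A(r)$ is $A$-invariant precisely when $[a,A(r)]_\bullet = 0$ for every $a \in A$; by the identity just proved these two conditions coincide. I expect the only real difficulty to be organizational — keeping track of the eight terms, their signs, and their non-associative parenthesizations, and invoking $\alpha$-invariance and Hom-associativity in the correct order so that each left-hand term is visibly carried onto one of the six right-hand terms.
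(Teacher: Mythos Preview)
Your proposal is correct and follows essentially the same route as the paper: both compute $(\alpha\otimes\Delta)(\Delta(a))$ and $(\Delta\otimes\alpha)(\Delta(a))$ by direct expansion, identify six of the resulting terms with the six summands of $-[a,A(r)]_\bullet$, and observe that the two remaining cross terms cancel after one application of $\alpha$-invariance, Hom-associativity, and an index swap. The only cosmetic difference is that the paper packages the computation by recognizing the $a\bullet(\cdots)$ and $(\cdots)\bullet a$ patterns as they arise (using the formula \eqref{deltaab} for $\Delta(ab)$ already established in Lemma~\ref{lem:deltacocycle}), whereas you propose a raw eight-term expansion before matching; the content is the same.
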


\begin{proof}
With $r = \sum u_i \otimes v_i$, the left-hand side of the Hom-coassociativity axiom \eqref{homcoassaxioms} for $\Delta = [-,r]_*$ applied to an element $a \in A$ is:
\[
\begin{split}
(\alpha \otimes \Delta)(\Delta(a))
&= (\alpha \otimes \Delta)(au_i \otimes \alpha(v_i) - \alpha(u_i) \otimes v_ia)\\
&= \alpha(a)\alpha(u_i) \otimes \alpha^{\otimes 2}(\Delta(v_i)) - \alpha^2(u_i) \otimes \Delta(v_ia) \quad\text{by \eqref{deltaalphacommute}}\\ &= \alpha(a)\alpha(u_i) \otimes \alpha(v_iu_j) \otimes \alpha^2(v_j) - \alpha(a)\alpha(u_i) \otimes \alpha^2(u_j) \otimes \alpha(v_jv_i)\\
&\relphantom{} - \alpha^2(u_i) \otimes \alpha(v_i)(au_j) \otimes \alpha^2(v_j) + \alpha^2(u_i) \otimes \alpha^2(u_j) \otimes (v_jv_i)\alpha(a) \quad \text{by \eqref{deltaab}}\\
&= a \bullet (r_{12}r_{23}) - a \bullet (r_{23}r_{13}) - \alpha^2(u_i) \otimes (v_ia)\alpha(u_j) \otimes \alpha^2(v_j) + (r_{23}r_{13}) \bullet a.
\end{split}
\]
Recall that the $3$-tensors $r_{12}r_{23}$, etc., are defined in \eqref{ar} and that the $\bullet$-action is defined in \eqref{dotaction}.  In the last equality above, we used the Hom-associativity \eqref{homassaxioms} of $\mu$ in the third summand.

Similarly, the right-hand side of the Hom-coassociativity axiom \eqref{homcoassaxioms} applied to $a \in A$ is:
\[
\begin{split}
(\Delta \otimes \alpha)(\Delta(a))
&= (\Delta \otimes \alpha)(au_i \otimes \alpha(v_i) - \alpha(u_i) \otimes v_ia)\\
&= \Delta(au_i) \otimes \alpha^2(v_i) - \alpha^{\otimes 2}(\Delta(u_i)) \otimes \alpha(v_i)\alpha(a)\\
&= \alpha(a)(u_iu_j) \otimes \alpha^2(v_j) \otimes \alpha^2(v_i) - \alpha^2(u_j) \otimes (v_ja)\alpha(u_i) \otimes \alpha^2(v_i)\\
&\relphantom{} - \alpha(u_iu_j) \otimes \alpha^2(v_j) \otimes \alpha(v_i)\alpha(a) + \alpha^2(u_j) \otimes \alpha(v_ju_i) \otimes \alpha(v_i)\alpha(a)\\
&= a \bullet (r_{13}r_{12}) - \alpha^2(u_j) \otimes (v_ja)\alpha(u_i) \otimes \alpha^2(v_i) - (r_{13}r_{12}) \bullet a + (r_{12}r_{23}) \bullet a.
\end{split}
\]
Combining the computations above, we have
\[
\begin{split}
(\alpha \otimes \Delta)(\Delta(a)) - (\Delta \otimes \alpha)(\Delta(a))
&= a \bullet (r_{12}r_{23}) - a \bullet (r_{23}r_{13}) + (r_{23}r_{13}) \bullet a\\
&\relphantom{} - a \bullet (r_{13}r_{12}) + (r_{13}r_{12}) \bullet a - (r_{12}r_{23}) \bullet a\\
&= -[a,A(r)]_\bullet.
\end{split}
\]
This proves the first assertion.  The second assertion is an immediate consequence of the first assertion because the $A$-invariance of $A(r)$ means that $[a,A(r)]_\bullet = 0$ for all $a \in A$.
\end{proof}

\section{Perturbation of comultiplication}
\label{sec:pertubation}

The purpose of this section is to discuss perturbation of the comultiplication in an $\epsilon$-Hom-bialgebra by suitable coboundaries, along the lines of Drinfel'd's perturbation theory of quasi-Hopf algebras \cite{dri83b,dri89b,dri90,dri91,dri92}.  Recall from Remark \ref{rk:cocycle} that the condition \eqref{cocycle} says that in an $\epsilon$-Hom-bialgebra $(A,\mu,\Delta,\alpha)$, the comultiplication $\Delta \colon A \to A^{\otimes 2}$ is a $1$-cocycle (in Hom-associative algebra cohomology with coefficients in $A^{\otimes 2}$).  In analogy with Hochschild cohomology, we can define the $0$-cochains $C^0(A,A^{\otimes 2})$ as the space $\{b \in A^{\otimes 2} \colon \alpha^{\otimes 2}(b) = b\}$ of $\alpha$-invariant $2$-tensors and the differential $\delta^0 \colon C^0(A,A^{\otimes 2}) \to C^1(A,A^{\otimes 2})$ as $\delta^0(r) = [-,r]_*$ \eqref{staraction}.  It is indeed the case that $\delta^1 \circ \delta^0 = 0$ (with $\delta^1$ as in \eqref{delta1}) because by Lemma \ref{lem:deltacocycle}, $[-,r]_*$ satisfies the cocycle condition \eqref{cocycle}.  In other words, $[-,r]_*$ is a $1$-coboundary, provided $r \in A^{\otimes 2}$ is $\alpha$-invariant.

Perturbation of cocycles by coboundaries is a natural construction in homological algebra.  Therefore, it makes sense to ask the following question:
\begin{quote}
Let $(A,\mu,\Delta,\alpha)$ be an $\epsilon$-Hom-bialgebra and $r \in A^{\otimes 2}$ be an $\alpha$-invariant $2$-tensor.  Define the perturbed comultiplication $\Delta' = \Delta + \Delta_r \colon A \to A^{\otimes 2}$, where $\Delta_r = [-,r]_*$.  Under what conditions is $A' = (A,\mu,\Delta',\alpha)$ an $\epsilon$-Hom-bialgebra?
\end{quote}
To state the answer to this question, we use the following shorthand:
\begin{equation}
\label{adminus}
(\alpha \otimes \Delta)^- = \alpha \otimes \Delta - \Delta \otimes \alpha.
\end{equation}
Also recall the $3$-tensors $r_{23}s_{13}$, $r_{13}s_{12}$, and $A(r)$ \eqref{ar}, and the bracket $[-,-]_\bullet$ \eqref{dotbracket}.  The following result gives an answer to the question above.

\begin{theorem}
\label{thm:perturb}
Let $(A,\mu,\Delta,\alpha)$ be an $\epsilon$-Hom-bialgebra and $r \in A^{\otimes 2}$ be an $\alpha$-invariant $2$-tensor.  Define the perturbed comultiplication $\Delta' = \Delta + \Delta_r \colon A \to A^{\otimes 2}$, where $\Delta_r = [-,r]_*$.  Then $A' = (A,\mu,\Delta',\alpha)$ is an $\epsilon$-Hom-bialgebra if and only if
\begin{equation}
\label{dagga'}
[a, (\alpha \otimes \Delta)^-(r) - A(r)]_\bullet = r_{23}\Delta(a)_{13} + \Delta(a)_{13}r_{12}
\end{equation}
for all $a \in A$.
\end{theorem}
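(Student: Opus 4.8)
The plan is to decompose the defining conditions of $A'$ into pieces we already understand. Since $(A,\mu,\alpha)$ is unchanged, $A' = (A,\mu,\Delta',\alpha)$ is an $\epsilon$-Hom-bialgebra if and only if (i) the comultiplicativity $\alpha^{\otimes 2}\circ\Delta' = \Delta'\circ\alpha$ holds, (ii) $\Delta'$ is Hom-coassociative, and (iii) the cocycle condition \eqref{cocycle} holds for $\Delta'$. Conditions (i) and (iii) are free: comultiplicativity of $\Delta$ holds by hypothesis and that of $\Delta_r = [-,r]_*$ holds by the computation \eqref{deltaalphacommute} (using $\alpha$-invariance of $r$); the cocycle condition holds for $\Delta$ by hypothesis and for $\Delta_r$ by Lemma \ref{lem:deltacocycle}, and \eqref{cocycle} is linear in the comultiplication, so it holds for the sum $\Delta' = \Delta + \Delta_r$. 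Therefore the \emph{only} condition that can fail is the Hom-coassociativity of $\Delta'$, and the entire theorem reduces to showing that
\[
(\alpha\otimes\Delta')\circ\Delta' = (\Delta'\otimes\alpha)\circ\Delta'
\]
is equivalent to \eqref{dagga'}.

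Next I would expand $(\alpha\otimes\Delta')^-\circ\Delta'$ (using the shorthand \eqref{adminus}) bilinearly into four terms:
\[
(\alpha\otimes\Delta)^-\circ\Delta \;+\; (\alpha\otimes\Delta)^-\circ\Delta_r \;+\; (\alpha\otimes\Delta_r)^-\circ\Delta \;+\; (\alpha\otimes\Delta_r)^-\circ\Delta_r,
\]
and identify each summand. The first term vanishes because $\Delta$ is Hom-coassociative in the given $\epsilon$-Hom-bialgebra $A$. The fourth term, $(\alpha\otimes\Delta_r)^-\circ\Delta_r$, is exactly the expression handled in Lemma \ref{lem:coboundchar}, which gives $(\alpha\otimes\Delta_r)^-\circ\Delta_r = -[-,A(r)]_\bullet$. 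So the whole Hom-coassociativity obstruction for $\Delta'$ equals
\[
-[-,A(r)]_\bullet \;+\; (\alpha\otimes\Delta)^-\big(\Delta_r(-)\big) \;+\; (\alpha\otimes\Delta_r)^-\big(\Delta(-)\big),
\]
and the claim \eqref{dagga'} will follow once I show that the two mixed cross-terms sum to $[-,(\alpha\otimes\Delta)^-(r)]_\bullet - r_{23}\Delta(-)_{13} - \Delta(-)_{13}r_{12}$.

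The main work — and the main obstacle — is the explicit computation of these two cross-terms for a general element $a\in A$, writing $r = \sum u_i\otimes v_i$, $\Delta(a) = \sum a_1\otimes a_2$, and $\Delta_r(a) = [a,r]_* = \sum au_i\otimes\alpha(v_i) - \alpha(u_i)\otimes v_i a$ as in \eqref{starbracketa}. In computing $(\alpha\otimes\Delta_r)^-(\Delta(a))$ I must apply $\Delta_r = [-,r]_*$ to each Sweedler leg $a_1$ and $a_2$ and then use the cocycle condition \eqref{cocycle'} to re-express $\Delta(au_i)$ and $\Delta(v_ia)$; in computing $(\alpha\otimes\Delta)^-(\Delta_r(a))$ I again apply the cocycle condition \eqref{cocycle'} to $\Delta(au_i)$ and $\Delta(v_ia)$. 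Many terms will carry $\Delta(u_i)$ or $\Delta(v_i)$, which I cannot simplify — these are precisely what assemble into the $3$-tensors $r_{13}\Delta(a)_{12}$, $r_{12}\Delta(a)_{23}$, $r_{23}\Delta(a)_{13}$ and into $[-,(\alpha\otimes\Delta)^-(r)]_\bullet$, using $\alpha$-invariance of $r$ and the multiplicativity and Hom-associativity of $\mu$ repeatedly to match $\alpha$-weights. The bookkeeping of which Hom-associativity/multiplicativity rewriting sends each monomial to its target form is delicate, but after collecting like terms the identity \eqref{dagga'} drops out; the converse direction is immediate since each step is an equivalence. I would present the cross-term computation as one displayed chain of equalities with the rewriting rules annotated, mirroring the style of Lemmas \ref{lem:deltacocycle} and \ref{lem:coboundchar}.
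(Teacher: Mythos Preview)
Your approach is essentially identical to the paper's: reduce to Hom-coassociativity of $\Delta'$, expand bilinearly into four pieces, kill the pure $\Delta$ piece by hypothesis, identify the pure $\Delta_r$ piece as $-[-,A(r)]_\bullet$ via Lemma~\ref{lem:coboundchar}, and compute the two cross-terms explicitly to obtain \eqref{dagga'}. One small slip in your description: the cocycle condition \eqref{cocycle'} is needed only in the cross-term $(\alpha\otimes\Delta)^-(\Delta_r(a))$ (to expand $\Delta(au_i)$ and $\Delta(v_ia)$), not in $(\alpha\otimes\Delta_r)^-(\Delta(a))$, which is computed directly from the formula for $\Delta_r$; and it is the $\Delta(u_i)$, $\Delta(v_i)$ terms that assemble into $[a,(\alpha\otimes\Delta)^-(r)]_\bullet$, while the $r_{ij}\Delta(a)_{kl}$ pieces come from the terms mixing $r$ with the Sweedler legs of $\Delta(a)$.
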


Before we give the proof of Theorem \ref{thm:perturb}, let us discuss some of its special cases.  First, restricting to $\alpha = Id$, Theorem \ref{thm:perturb} gives a necessary and sufficient condition (\eqref{dagga'} with $\alpha = Id$) under which an $\epsilon$-bialgebra $(A,\mu,\Delta)$ gives rise to a perturbed $\epsilon$-bialgebra $A' = (A,\mu,\Delta' = \Delta + \Delta_r)$.

Second, in Theorem \ref{thm:perturb}, the special case $\Delta = 0$ is Theorem \ref{thm:coboundchar}.  Indeed, when $\Delta = 0$, the condition \eqref{dagga'} reduces to
\[
[a,-A(r)]_\bullet = 0
\]
for all $a \in A$, which is equivalent to $A(r)$ being $A$-invariant.

\begin{proof}[Proof of Theorem \ref{thm:perturb}]
First note that $A'$ is an $\epsilon$-Hom-bialgebra if and only if (i) $(A,\Delta',\alpha)$ is a Hom-coassociative coalgebra and (ii) the cocycle condition \eqref{cocycle} is satisfied for $\Delta'$.  Since both $\Delta$ and $\Delta_r$ commute with $\alpha$ (see \eqref{deltaalphacommute} for the latter), so does $\Delta' = \Delta + \Delta_r$.  Likewise, since both $\Delta$ and $\Delta_r$ satisfy the cocycle condition \eqref{cocycle} (see Lemma \ref{lem:deltacocycle} for the latter), so does their sum $\Delta'$.  In Lemma \ref{lem:perturb} below, we will show that $\Delta'$ is Hom-coassociative if and only if \eqref{dagga'} holds.  Therefore, to finish the proof of the Theorem, it remains to establish the following Lemma.
\end{proof}

\begin{lemma}
\label{lem:perturb}
With the hypotheses of Theorem \ref{thm:perturb}, the perturbed comultiplication $\Delta'$ is Hom-coassociative if and only if \eqref{dagga'} is satisfied.
\end{lemma}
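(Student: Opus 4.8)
The plan is to compute the Hom-coassociator
\[
D(a) := (\alpha \otimes \Delta')(\Delta'(a)) - (\Delta' \otimes \alpha)(\Delta'(a)), \qquad a \in A,
\]
and to show that $D \equiv 0$ precisely when \eqref{dagga'} holds.  Writing $\Delta' = \Delta + \Delta_r$ and expanding by bilinearity, $D(a)$ breaks into three pieces: the ``pure $\Delta$'' piece $(\alpha \otimes \Delta)(\Delta(a)) - (\Delta \otimes \alpha)(\Delta(a))$, which vanishes because $\Delta$ is Hom-coassociative in the $\epsilon$-Hom-bialgebra $A$; the ``pure $\Delta_r$'' piece $(\alpha \otimes \Delta_r)(\Delta_r(a)) - (\Delta_r \otimes \alpha)(\Delta_r(a))$, which equals $-[a, A(r)]_\bullet$ by Lemma~\ref{lem:coboundchar}; and the cross piece
\[
C(a) := (\alpha \otimes \Delta_r)(\Delta(a)) - (\Delta_r \otimes \alpha)(\Delta(a)) + (\alpha \otimes \Delta)(\Delta_r(a)) - (\Delta \otimes \alpha)(\Delta_r(a)).
\]
Thus $D(a) = -[a,A(r)]_\bullet + C(a)$, and the whole lemma reduces to evaluating $C(a)$.

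Next I would compute $C(a)$ explicitly, writing $r = \sum u_i \otimes v_i$, $\Delta(a) = \sum a_1 \otimes a_2$, and $\Delta_r(x) = \sum xu_i \otimes \alpha(v_i) - \alpha(u_i) \otimes v_ix$ as in \eqref{starbracketa}.  The first two summands of $C(a)$ only require substituting the definition of $\Delta_r$ into $\alpha(a_1) \otimes \Delta_r(a_2)$ and $\Delta_r(a_1) \otimes \alpha(a_2)$; the last two require comultiplicativity $\Delta \circ \alpha = \alpha^{\otimes 2} \circ \Delta$ to rewrite $\Delta(\alpha(u_i))$ and $\Delta(\alpha(v_i))$, together with the cocycle condition \eqref{cocycle'} to expand $\Delta(au_i)$ and $\Delta(v_ia)$.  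I expect the resulting monomials to fall into three groups: two of them assemble directly, via the definitions \eqref{ar} with $\Delta(a)$ in the role of $s$, into $-\,r_{23}\Delta(a)_{13} - \Delta(a)_{13}r_{12}$; four of them are built from $\alpha(u_i)$, $\Delta(u_i)$, $\alpha(v_i)$, $\Delta(v_i)$ and $a$, and assemble into $[a, (\alpha \otimes \Delta)^-(r)]_\bullet$ once one recalls that $(\alpha \otimes \Delta)^-(r) = \sum \alpha(u_i) \otimes \Delta(v_i) - \sum \Delta(u_i) \otimes \alpha(v_i)$ and uses the definition \eqref{dotbracket} of $[-,-]_\bullet$ on $A^{\otimes 3}$; and the remaining four ``mixed'' terms, each involving both $a_1 \otimes a_2$ and $u_i \otimes v_i$, cancel in two pairs.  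The cancellation of the mixed terms uses the $\alpha$-invariance $\alpha^{\otimes 2}(r) = r$ to absorb one extra power of $\alpha$ that lands on $u_i$ or $v_i$, together with multiplicativity of $\alpha$.  The upshot is
\[
C(a) = [a, (\alpha \otimes \Delta)^-(r)]_\bullet - r_{23}\Delta(a)_{13} - \Delta(a)_{13}r_{12}.
\]

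Combining the three pieces and using linearity of $[a,-]_\bullet$ in its second slot gives
\[
D(a) = [a,\, (\alpha \otimes \Delta)^-(r) - A(r)]_\bullet - \bigl(r_{23}\Delta(a)_{13} + \Delta(a)_{13}r_{12}\bigr),
\]
so $\Delta'$ is Hom-coassociative, i.e.\ $D(a) = 0$ for all $a \in A$, if and only if \eqref{dagga'} holds; this is the assertion of the lemma.

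The main obstacle is the middle step.  The expansion of $C(a)$ produces on the order of a dozen tensor monomials, and the real work is in sorting them correctly into the three target expressions --- in particular, recognizing which summands cancel by $\alpha$-invariance of $r$ (this requires scrupulous bookkeeping of the powers of $\alpha$, since the cancellation only works after the invariance identity is applied in exactly the right spot) and which reassemble into $[a, (\alpha \otimes \Delta)^-(r)]_\bullet$.  The repeated use of the cocycle condition \eqref{cocycle'} on $\Delta(au_i)$ and $\Delta(v_ia)$, and of Hom-associativity to normalize products, is routine but is where sign or index slips are most likely.
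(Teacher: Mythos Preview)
Your proposal is correct and follows essentially the same route as the paper: both split the Hom-coassociator of $\Delta'$ into a pure-$\Delta$ piece (vanishing by hypothesis), a pure-$\Delta_r$ piece (handled by Lemma~\ref{lem:coboundchar}), and a cross piece, and then reduce the cross piece to $[a,(\alpha\otimes\Delta)^-(r)]_\bullet - r_{23}\Delta(a)_{13} - \Delta(a)_{13}r_{12}$ via the cocycle condition, comultiplicativity, and $\alpha$-invariance of $r$. The only organizational difference is that the paper computes the two halves of the cross piece separately---obtaining $[a,(\alpha\otimes\Delta)^-(r)]_\bullet - r_{12}\Delta(a)_{23} - \Delta(a)_{12}r_{23}$ from $(\alpha\otimes\Delta)^-(\Delta_r(a))$ and $\Delta(a)_{12}r_{23} - r_{23}\Delta(a)_{13} - \Delta(a)_{13}r_{12} + r_{12}\Delta(a)_{23}$ from $(\alpha\otimes\Delta_r)^-(\Delta(a))$---so the four cancelling ``mixed'' terms you anticipate are exactly $\pm r_{12}\Delta(a)_{23}$ and $\pm\Delta(a)_{12}r_{23}$; Hom-associativity itself is not actually needed in this step.
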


\begin{proof}
Since $\Delta' = \Delta + \Delta_r$, the left-hand side of the Hom-coassociativity axiom \eqref{homcoassaxioms} for $\Delta'$ is
\[
\begin{split}
(\alpha \otimes \Delta') \circ \Delta'
&= (\alpha \otimes \Delta) \circ \Delta + (\alpha \otimes \Delta) \circ \Delta_r\\
&\relphantom{} + (\alpha \otimes \Delta_r) \circ \Delta + (\alpha \otimes \Delta_r) \circ \Delta_r.
\end{split}
\]
Likewise, the right-hand side of the Hom-coassociativity axiom for $\Delta'$ is
\[
\begin{split}
(\Delta' \otimes \alpha) \circ \Delta'
&= (\Delta \otimes \alpha) \circ \Delta + (\Delta \otimes \alpha) \circ \Delta_r\\
&\relphantom{} (\Delta_r \otimes \alpha) \circ \Delta + (\Delta_r \otimes \alpha) \circ \Delta_r.
\end{split}
\]
By assumption $\Delta$ is Hom-coassociative, i.e., $(\alpha \otimes \Delta) \circ \Delta = (\Delta \otimes \alpha) \circ \Delta$.  Thus, the computations above imply that $\Delta'$ is Hom-coassociative if and only if
\begin{equation}
\label{dagga}
(\alpha \otimes \Delta - \Delta \otimes \alpha) \circ \Delta_r + (\alpha \otimes \Delta_r - \Delta_r \otimes \alpha) \circ \Delta
= (\Delta_r \otimes \alpha - \alpha \otimes \Delta_r) \circ \Delta_r.
\end{equation}
By Lemma \ref{lem:coboundchar} the right-hand side of \eqref{dagga} is equal to $[-,A(r)]_\bullet$.  Thus, using the shorthand \eqref{adminus}, to show the equivalence between \eqref{dagga'} and \eqref{dagga}, it is enough to show
\begin{equation}
\label{dagga2}
\begin{split}
(\alpha \otimes \Delta)^-(\Delta_r(a)) & + (\alpha \otimes \Delta_r)^-(\Delta(a))\\
&= [a, (\alpha \otimes \Delta)^-(r)]_\bullet - r_{23}\Delta(a)_{13} - \Delta(a)_{13}r_{12}
\end{split}
\end{equation}
for all $a \in A$.

Write $r = \sum u_i \otimes v_i$.  Since $\Delta_r(a) = au_i \otimes \alpha(v_i) - \alpha(u_i) \otimes v_ia$, using the notation in \eqref{cocycle''}, the first summand on the left-hand side of \eqref{dagga2} is:
\begin{equation}
\label{dagga2l}
\begin{split}
&(\alpha \otimes \Delta)^-(\Delta_r(a))\\
&= \alpha(a)\alpha(u_i) \otimes \Delta(\alpha(v_i)) - \alpha^2(u_i) \otimes \Delta(v_ia) - \Delta(au_i) \otimes \alpha^2(v_i) + \Delta(\alpha(u_i)) \otimes \alpha(v_i)\alpha(a)\\
&= \alpha(a)\alpha(u_i) \otimes \alpha^{\otimes 2}(\Delta(v_i)) - \alpha^2(u_i) \otimes v_i \bullet \Delta(a) - \alpha^2(u_i) \otimes \Delta(v_i) \bullet a\\
&\relphantom{} - a \bullet \Delta(u_i) \otimes \alpha^2(v_i) - \Delta(a) \bullet u_i \otimes \alpha^2(v_i) + \alpha^{\otimes 2}(\Delta(u_i)) \otimes \alpha(v_i)\alpha(a)\\
&= a \bullet ((\alpha \otimes \Delta)(r)) - r_{12}\Delta(a)_{23} - ((\alpha \otimes \Delta)(r))\bullet a\\
&\relphantom{} - a \bullet ((\Delta \otimes \alpha)(r)) - \Delta(a)_{12}r_{23} + ((\Delta \otimes \alpha)(r)) \bullet a\\
&= [a, (\alpha \otimes \Delta)^-(r)]_\bullet - r_{12}\Delta(a)_{23} - \Delta(a)_{12}r_{23}.
\end{split}
\end{equation}
In the third equality above, we used the $\alpha$-invariance of $r$ on the second and the fifth summands.  Likewise, writing $\Delta(a) = \sum a_1 \otimes a_2$, the second summand on the left-hand side of \eqref{dagga2} is:
\begin{equation}
\label{dagga2r}
\begin{split}
(\alpha \otimes \Delta_r)^-(\Delta(a))
&= \alpha(a_1) \otimes \Delta_r(a_2) - \Delta_r(a_1) \otimes \alpha(a_2)\\
&= \alpha(a_1) \otimes a_2u_i \otimes \alpha(v_i) - \alpha(a_1) \otimes \alpha(u_i) \otimes v_ia_2 \\
&\relphantom{} - a_1u_i \otimes \alpha(v_i) \otimes \alpha(a_2) + \alpha(u_i) \otimes v_ia_1 \otimes \alpha(a_2)\\
&= \Delta(a)_{12}r_{23} - r_{23}\Delta(a)_{13} - \Delta(a)_{13}r_{12} + r_{12}\Delta(a)_{23}.
\end{split}
\end{equation}
Combining \eqref{dagga2l} and \eqref{dagga2r}, we obtain the desired equality \eqref{dagga2}.
\end{proof}

\section{The associative Hom-Yang-Baxter equation and quasi-triangular $\epsilon$-Hom-bialgebras}
\label{sec:ahybe}

The purpose of this section is to study a sub-class of the coboundary $\epsilon$-Hom-bialgebras (Definition \ref{def:cobound}), called quasi-triangular $\epsilon$-Hom-bialgebras, and the closely related associative Hom-Yang-Baxter equation.  Recall from Theorem \ref{thm:coboundchar} that, given a Hom-associative algebra $(A,\mu,\alpha)$ and an $\alpha$-invariant $2$-tensor $r \in A^{\otimes 2}$, the tuple $(A,\mu,\Delta=[-,r]_*,\alpha,r)$ is a coboundary $\epsilon$-Hom-bialgebra if and only if the $3$-tensor $A(r)$ \eqref{ar} is $A$-invariant.  Since $0$ is the simplest $A$-invariant element, this leads us naturally to the following definitions.

\begin{definition}
\label{def:ahybe}
Let $(A,\mu,\alpha)$ be a Hom-associative algebra and $r \in A^{\otimes 2}$.  We say that $r$ is a solution of the \textbf{associative Hom-Yang-Baxter equation} (AHYBE) in $A$ if
\begin{equation}
\label{ahybe}
A(r) = r_{13}r_{12} - r_{12}r_{23} + r_{23}r_{13} = 0,
\end{equation}
where the $3$-tensors $r_{13}r_{12}$, etc., are defined in \eqref{ar}.  Define a \textbf{quasi-triangular $\epsilon$-Hom-bialgebra} as a coboundary $\epsilon$-Hom-bialgebra $(A,\mu,\Delta,\alpha,r)$ in which $r$ is a solution of the AHYBE.  In the special case that $A$ is an associative algebra (i.e., $\alpha = Id$), we obtain the definitions of the \textbf{associative Yang-Baxter equation} (AYBE) and of a \textbf{quasi-triangular $\epsilon$-bialgebra}.
\end{definition}

Note that a quasi-triangular $\epsilon$-bialgebra is by definition a coboundary $\epsilon$-bialgebra $(A,\mu,\Delta,r)$ in which $r$ is a solution of the AYBE.  The AYBE and quasi-triangular $\epsilon$-bialgebras were introduced by Aguiar \cite{aguiar,aguiar2}.  The AYBE is the associative version of the classical Yang-Baxter equation (CYBE) \cite{skl1,skl2}.  Indeed, Aguiar (\cite{aguiar2} Theorem 3.5 and Corollary 3.7) showed that, under the assumption that the symmetric part $r^+$ of $r$ is $A$-invariant, a solution of the AYBE gives rise to a solution of the CYBE, and a quasi-triangular $\epsilon$-bialgebra gives rise to a Drinfel'd quasi-triangular Lie-bialgebra \cite{dri83,dri87}.  The AYBE with spectral parameters was introduced by Polishchuk \cite{pol} and was studied further by Schedler \cite{schedler}.

In this section, we prove the two Twisting Principles for solutions of the AHYBE and for quasi-triangular $\epsilon$-Hom-bialgebras.  We also give several characterizations of the AHYBE in a coboundary $\epsilon$-Hom-bialgebra.  The relationships between the AHYBE and the classical Hom-Yang-Baxter equation (CHYBE) \cite{yau8} and between quasi-triangular $\epsilon$-Hom-bialgebras and Hom-Lie bialgebras will be studied in section \ref{sec:qt}.  As we will see in that section, just as the AYBE is the associative analog of the CYBE, the AHYBE is the Hom-associative analog of the CHYBE.

We begin with the First Twisting Principle for the AHYBE.  It says that a solution of the AYBE and an algebra morphism give rise to a derived sequence of solutions of the AHYBE.

\begin{theorem}
\label{thm:ahybe1}
Let $(A,\mu)$ be an associative algebra, $r \in A^{\otimes 2}$ be a solution of the AYBE, and $\alpha \colon A \to A$ be an algebra morphism.  Then $(\alpha^{\otimes 2})^n(r)$ is a solution of the AHYBE in the Hom-associative algebra $A_\alpha = (A,\mualpha = \alpha\circ\mu,\alpha)$ for each $n \geq 0$.
\end{theorem}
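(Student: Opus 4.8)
The plan is to reduce the statement to the combination of two results already available in the excerpt: Theorem \ref{thm:firsttp}-style twisting (in the guise of the First Twisting Principle) and the Second Twisting Principle (Theorem \ref{thm:secondtp}, or rather its coboundary version Theorem \ref{thm:coboundtp2}), together with the characterization of solutions of the AHYBE via the vanishing of $A(r)$. The key observation is that the AYBE ``$A(r)=0$'' and the AHYBE ``$A(r)=0$'' are formally the same equation, only computed in different (Hom-)associative algebras, and the $3$-tensor $A(-)$ is built entirely out of $\mu$ and $\alpha$ via the formulas in \eqref{ar}. So the task is to track how $A(r)$ transforms when one passes from $(A,\mu)$ to $A_\alpha=(A,\mualpha,\alpha)$ and then along the derived sequence.

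First I would treat the case $n=0$: show that if $r$ solves the AYBE in $(A,\mu)$ and $\alpha$ is an algebra endomorphism, then $r$ itself solves the AHYBE in $A_\alpha=(A,\mualpha,\alpha)$. For this I would compute the three $3$-tensors $r_{13}r_{12}$, $r_{12}r_{23}$, $r_{23}r_{13}$ of \eqref{ar} using $\mualpha=\alpha\circ\mu$ in place of $\mu$. Writing $r=\sum u_i\otimes v_i$, each term picks up one application of $\mualpha$ and two of $\alpha$; e.g. $r_{13}r_{12}$ in $A_\alpha$ equals $\sum \mualpha(u_i,u_j')\otimes\alpha(v_j')\otimes\alpha(v_i) = \alpha^{\otimes 3}\big(\sum \mu(u_i,u_j')\otimes v_j'\otimes v_i\big)$ (using multiplicativity $\alpha\circ\mu=\mu\circ\alpha^{\otimes2}$ to move the extra $\alpha$'s through). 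Doing this for all three terms shows $A_\alpha(r)=\alpha^{\otimes 3}\big(A(r)\big)$, where $A_\alpha$ denotes the operator \eqref{ar} computed in $A_\alpha$. Hence $A(r)=0$ in $(A,\mu)$ forces $A_\alpha(r)=\alpha^{\otimes 3}(0)=0$, so $r$ solves the AHYBE in $A_\alpha$. This is the substantive computation; everything else is bookkeeping.

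Next I would handle general $n$. One clean route: the Hom-associative algebra in the statement is $A_\alpha$, and raising $r$ to $(\alpha^{\otimes 2})^n(r)$ while keeping the algebra $A_\alpha$ is essentially the Second Twisting Principle applied inside $A_\alpha$. More directly, I would just redo the same computation as in the $n=0$ case but with $r$ replaced by $(\alpha^{\otimes 2})^n(r)$: since $\alpha$ is an algebra map, $\alpha^{\otimes 3}$ commutes with the operator $A_\alpha(-)$ (again by multiplicativity), so $A_\alpha\big((\alpha^{\otimes 2})^n(r)\big) = (\alpha^{\otimes 3})^n\big(A_\alpha(r)\big) = (\alpha^{\otimes 3})^n(0) = 0$. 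Thus $(\alpha^{\otimes 2})^n(r)$ solves the AHYBE in $A_\alpha$ for every $n\geq 0$. Alternatively one can phrase this as an induction on $n$, each step being the observation that if $s$ solves the AHYBE in $A_\alpha$ then so does $\alpha^{\otimes 2}(s)$, which is exactly the first step applied to the self-morphism $\alpha\colon A_\alpha\to A_\alpha$.

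I expect the main obstacle to be purely notational: carefully juggling the three indices $\{1,2,3\}$ and the pattern of $\mu$'s versus $\alpha$'s in the definitions \eqref{ar} so that the identity $A_\alpha(-) = \alpha^{\otimes 3}\circ A_{\mathrm{id}}(-)$ (and its iterate) comes out cleanly, using multiplicativity $\alpha\circ\mu=\mu\circ\alpha^{\otimes2}$ to commute $\alpha$ past products. There is no conceptual difficulty: once the transformation rule for $A(r)$ under twisting is established, the conclusion is immediate. I would also remark in passing that, combined with Theorem \ref{thm:coboundchar}, this gives the First Twisting Principle for quasi-triangular $\epsilon$-Hom-bialgebras, foreshadowing Corollary \ref{cor:qt1}.
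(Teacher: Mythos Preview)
Your proposal is correct and follows essentially the same approach as the paper: both establish the identity $A_\alpha\big((\alpha^{\otimes 2})^n(r)\big)=(\alpha^{\otimes 3})^{n+1}\big(A(r)\big)$ by direct computation using $\mualpha\circ(\alpha^{\otimes 2})^n=\alpha^{n+1}\circ\mu$, from which the vanishing follows immediately. The only cosmetic difference is that the paper carries out the computation for general $n$ in one step, whereas you first do $n=0$ and then iterate via $A_\alpha\circ\alpha^{\otimes 2}=\alpha^{\otimes 3}\circ A_\alpha$; your references to the Second Twisting Principle in the preamble are a bit of a red herring (it changes the twisting map, not just $r$), but you do not actually rely on it.
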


\begin{proof}
That $A_\alpha$ is a Hom-associative algebra was proved in \cite{yau2} (Theorem 2.3), and we already used this fact in the proof of Theorem \ref{thm:firsttp}.  Let us write $r^n$ for $(\alpha^{\otimes 2})^n(r)$ and $A_\alpha(\cdot) = 0$ for the AHYBE in $A_\alpha$.  To prove the Theorem, it suffices to show that
\begin{equation}
\label{aalphar}
A_\alpha(r^n) = (\alpha^{\otimes 3})^{n+1}(A(r)),
\end{equation}
since by assumption $A(r) = 0$.  Writing $r = \sum u_i \otimes v_i$, we have $r^n = \sum \alpha^n(u_i) \otimes \alpha^n(v_i)$.  Note that
\[
\mualpha \circ (\alpha^{\otimes 2})^n = \alpha \circ \mu \circ (\alpha^{\otimes 2})^n = \alpha^{n+1} \circ \mu
\]
by the multiplicativity of $\alpha$ \eqref{homassaxioms}.  With the definitions in \eqref{ar}, we compute as follows:
\[
\begin{split}
A_\alpha(r^n) &= \mualpha(\alpha^n(u_i),\alpha^n(u_j)) \otimes \alpha(\alpha^n(v_j)) \otimes \alpha(\alpha^n(v_i))\\
&\relphantom{} - \alpha(\alpha^n(u_i)) \otimes \mualpha(\alpha^n(v_i),\alpha^n(u_j)) \otimes \alpha(\alpha^n(v_j))\\
&\relphantom{} + \alpha(\alpha^n(u_i)) \otimes \alpha(\alpha^n(u_j)) \otimes \mu_\alpha(\alpha^n(v_j),\alpha^n(v_i))\\
&= (\alpha^{\otimes 3})^{n+1}(\mu(u_i,u_j) \otimes v_j \otimes v_i - u_i \otimes \mu(v_i,u_j) \otimes v_j + u_i \otimes u_j \otimes \mu(v_j,v_i))\\
&= (\alpha^{\otimes 3})^{n+1}(A(r)).
\end{split}
\]
This proves \eqref{aalphar}, as desired.
\end{proof}

\begin{example}
\label{ex:unitalA}
Let $(A,\mu)$ be a unital associative algebra and $a \in A$ be a square-zero element, i.e., $a^2 = 0$.  Then $r = 1 \otimes a$ is a solution of the AYBE; see \cite{aguiar} Example 5.4.1.  Let $\alpha \colon A \to A$ be an algebra morphism, not-necessarily preserving the unit.  Then, by Theorem \ref{thm:ahybe1}, $A_\alpha = (A,\mualpha = \alpha \circ \mu,\alpha)$ is a Hom-associative algebra, and
\[
(\alpha^{\otimes 2})^n(r) = \alpha^n(1) \otimes \alpha^n(a)
\]
is a solution of the AHYBE \eqref{ahybe} in $A_\alpha$ for each $n \geq 0$.\qed
\end{example}

\begin{example}
\label{ex:nonunitalA}
Let $(A,\mu)$ be a not-necessarily unital associative algebra and $b \in A$ be a square-zero element.  Then $r = b \otimes b$ is a solution of the AYBE; see \cite{aguiar} Example 5.4.4.  Let $\alpha \colon A \to A$ be an algebra morphism.  Then, by Theorem \ref{thm:ahybe1}, $A_\alpha = (A,\mualpha=\alpha\circ\mu,\alpha)$ is a Hom-associative algebra, and
\[
(\alpha^{\otimes 2})^n(r) = \alpha^n(b) \otimes \alpha^n(b)
\]
is a solution of the AHYBE \eqref{ahybe} in $A_\alpha$ for each $n \geq 0$.
\qed
\end{example}

The following result is the Second Twisting Principle for the AHYBE.  It says that every solution $r$ of the AHYBE in a Hom-associative algebra $A$ is also a solution of the AHYBE in the derived Hom-associative algebras $A^n$ (Theorem \ref{thm:secondtp}).

\begin{theorem}
\label{thm:ahybe2}
Let $(A,\mu,\alpha)$ be a Hom-associative algebra and $r \in A^{\otimes 2}$ be a solution of the AHYBE.  Then $r$ is also a solution of the AHYBE in the Hom-associative algebra $A^n = (A,\mun,\alpha^{2^n})$ for each $n \geq 0$, where $\mun = \alpha^{2^n-1} \circ \mu$.
\end{theorem}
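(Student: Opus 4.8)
The plan is to imitate the proof of Theorem \ref{thm:ahybe1}: I would show that the $3$-tensor $A(r)$, when formed inside the derived Hom-associative algebra $A^n$, is nothing but a twisted copy of $A(r)$ formed inside $A$, and then invoke the hypothesis $A(r)=0$. Since $r$ is kept fixed here (rather than replaced by a twist of itself as in the First Twisting Principle), the bookkeeping of powers of $\alpha$ goes the other way, but the shape of the argument is the same.

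First I would note that $A^n=(A,\mun,\alpha^{2^n})$ is already known to be a Hom-associative algebra: this is part of the content of Theorem \ref{thm:secondtp} (and was checked directly in \cite{yau11}), so nothing new is needed on that point. As in the proof of Theorem \ref{thm:secondtp} one has $A^0=A$ and $A^{n+1}=(A^n)^1$, so an induction on $n$ would reduce the statement to the case $n=1$; however the computation is no more difficult for general $n$, so I would simply carry it out once and for all.

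Write $r=\sum_i u_i\otimes v_i$, and let $A^{(n)}(\cdot)$ denote the expression $A(\cdot)$ of \eqref{ar} formed with the structure maps $\mun=\alpha^{2^n-1}\circ\mu$ and $\alpha^{2^n}$ of $A^n$ in place of $\mu$ and $\alpha$. The key step is the identity
\[
A^{(n)}(r)=(\alpha^{\otimes 3})^{2^n-1}\bigl(A(r)\bigr).
\]
To establish it, one expands each of the three summands $r_{13}r_{12}$, $r_{12}r_{23}$, $r_{23}r_{13}$ of \eqref{ar} inside $A^n$. In each summand exactly one tensor slot carries a $\mu$-product and the other two carry a single application of the twisting map; using $\alpha^{2^n}=\alpha^{2^n-1}\circ\alpha$ on the two twisting-map slots and $\mun=\alpha^{2^n-1}\circ\mu$ on the product slot, one factors a common $(\alpha^{\otimes 3})^{2^n-1}$ out of each term. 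Adding the three terms with their signs yields the displayed identity. Since $r$ is a solution of the AHYBE in $A$, we have $A(r)=0$, hence $A^{(n)}(r)=(\alpha^{\otimes 3})^{2^n-1}(0)=0$, i.e., $r$ is a solution of the AHYBE in $A^n$.

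The argument is essentially definition-chasing together with the trivial factorization $\alpha^{2^n}=\alpha^{2^n-1}\circ\alpha$; the one place that deserves a moment of attention is checking that the two sources of extra powers of $\alpha$ --- the passage from the twisting map $\alpha$ to $\alpha^{2^n}$ on the two ``pure'' slots of each term, and the passage from $\mu$ to $\mun$ on the single ``product'' slot --- both contribute exactly the factor $\alpha^{2^n-1}$, so that a single $(\alpha^{\otimes 3})^{2^n-1}$ can indeed be pulled out uniformly from all three terms of $A(r)$. That uniformity check is the only real (and still minor) obstacle I anticipate.
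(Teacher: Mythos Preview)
Your proposal is correct and follows essentially the same route as the paper: you establish the key identity $A^{(n)}(r)=(\alpha^{\otimes 3})^{2^n-1}(A(r))$ by factoring $\alpha^{2^n-1}$ out of each tensor slot of every summand, then invoke $A(r)=0$. The paper's proof is the same computation written out explicitly term by term, without the preliminary remark about reducing to $n=1$ by induction.
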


\begin{proof}
One can check directly that the axioms \eqref{homassaxioms} hold for $A^n$.  Let us write $A^n(\cdot) = 0$ for the AHYBE in $A^n$.  To prove the Theorem, it suffices to show that
\begin{equation}
\label{anr}
A^n(r) = (\alpha^{\otimes 3})^{2^n-1}(A(r)),
\end{equation}
since by assumption $A(r) = 0$.  We compute as follows:
\[
\begin{split}
A^n(r) &= \mun(u_i,u_j) \otimes \alpha^{2^n}(v_j) \otimes \alpha^{2^n}(v_i)
- \alpha^{2^n}(u_i) \otimes \mun(v_i,u_j) \otimes \alpha^{2^n}(v_j)\\
&\relphantom{} + \alpha^{2^n}(u_i) \otimes \alpha^{2^n}(u_j) \otimes \mun(v_j,v_i)\\
&= (\alpha^{\otimes 3})^{2^n-1}(\mu(u_i,u_j) \otimes \alpha(v_j) \otimes \alpha(v_i)) - (\alpha^{\otimes 3})^{2^n-1}(\alpha(u_i) \otimes \mu(v_i,u_j) \otimes \alpha(v_j))\\
&\relphantom{} + (\alpha^{\otimes 3})^{2^n-1}(\alpha(u_i) \otimes \alpha(u_j) \otimes \mu(v_j,v_i))\\
&= (\alpha^{\otimes 3})^{2^n-1}(A(r)).
\end{split}
\]
This proves \eqref{anr}, as desired.
\end{proof}

We now combine the results about coboundary $\epsilon$-Hom-bialgebras and the AHYBE to obtain results about quasi-triangular $\epsilon$-Hom-bialgebras.  The following result says that a quasi-triangular $\epsilon$-Hom-bialgebra can be constructed from a Hom-associative algebra and an $\alpha$-invariant solution of the AHYBE.

\begin{corollary}
\label{cor:qtchar}
Let $(A,\mu,\alpha)$ be a Hom-associative algebra and $r \in A^{\otimes 2}$.  Then $(A,\mu,\Delta=[-,r]_*,\alpha,r)$ is a quasi-triangular $\epsilon$-Hom-bialgebra if and only if $r$ is an $\alpha$-invariant solution of the AHYBE.
\end{corollary}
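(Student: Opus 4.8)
The plan is to read off the statement from Theorem~\ref{thm:coboundchar} together with the definitions of coboundary and quasi-triangular $\epsilon$-Hom-bialgebras, with essentially no new computation. Recall that by Definition~\ref{def:ahybe} a quasi-triangular $\epsilon$-Hom-bialgebra is a coboundary $\epsilon$-Hom-bialgebra $(A,\mu,\Delta,\alpha,r)$ in which $r$ solves the AHYBE, and that by Definition~\ref{def:cobound} a coboundary $\epsilon$-Hom-bialgebra requires $r$ to be $\alpha$-invariant with $\Delta = [-,r]_*$. Thus the assertion to prove is that $(A,\mu,[-,r]_*,\alpha,r)$ is a coboundary $\epsilon$-Hom-bialgebra with $A(r) = 0$ precisely when $r$ is $\alpha$-invariant and $A(r) = 0$.

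First I would do the forward direction: if $(A,\mu,\Delta=[-,r]_*,\alpha,r)$ is quasi-triangular, then it is in particular a coboundary $\epsilon$-Hom-bialgebra, so $r$ is $\alpha$-invariant, and by definition $A(r) = 0$; hence $r$ is an $\alpha$-invariant solution of the AHYBE. For the converse, suppose $r$ is $\alpha$-invariant and $A(r) = 0$. The only observation needed is that the zero tensor is $A$-invariant, since $[a,0]_\bullet = 0$ for all $a \in A$ by the definition \eqref{dotbracket} of $[-,-]_\bullet$; so Theorem~\ref{thm:coboundchar} immediately gives that $(A,\mu,[-,r]_*,\alpha,r)$ is a coboundary $\epsilon$-Hom-bialgebra, and since $r$ additionally solves the AHYBE it is quasi-triangular by Definition~\ref{def:ahybe}.

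I do not expect any genuine obstacle: the corollary is just Theorem~\ref{thm:coboundchar} specialized to the simplest $A$-invariant $3$-tensor, namely $0$. The only point requiring a word of care is that the phrase ``$\alpha$-invariant solution of the AHYBE'' is designed to package together the $\alpha$-invariance demanded by the coboundary condition and the vanishing $A(r) = 0$ demanded by quasi-triangularity, so that invoking Theorem~\ref{thm:coboundchar} is all that is required and no re-derivation of the cocycle condition \eqref{cocycle} or of Hom-coassociativity is needed.
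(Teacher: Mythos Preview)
Your proposal is correct and follows essentially the same approach as the paper: the ``only if'' direction is immediate from the definitions, and the ``if'' direction is Theorem~\ref{thm:coboundchar} applied to the trivially $A$-invariant tensor $A(r)=0$. The paper's proof is simply a terser version of what you wrote.
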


\begin{proof}
The ``only if" part follows immediately from the definition of a quasi-triangular $\epsilon$-Hom-bialgebra.  The ``if" part follows from Theorem \ref{thm:coboundchar} because $A(r) = 0$ is trivially $A$-invariant.
\end{proof}

The next two results are the Twisting Principles for quasi-triangular $\epsilon$-Hom-bialgebras.

\begin{corollary}
\label{cor:qt1}
Let $(A,\mu,\Delta,r)$ be a quasi-triangular $\epsilon$-bialgebra and $\alpha \colon A \to A$ be an algebra morphism such that $\alpha^{\otimes 2}(r) = r$. Then
\[
A_\alpha = (A,\mualpha,\Deltaalpha,\alpha,r)
\]
is a quasi-triangular $\epsilon$-Hom-bialgebra, where $\mualpha = \alpha\circ\mu$ and $\Deltaalpha = \Delta\circ\alpha$.
\end{corollary}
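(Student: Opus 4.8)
The plan is to obtain Corollary \ref{cor:qt1} by assembling two results already in hand: the First Twisting Principle for coboundary $\epsilon$-Hom-bialgebras (Theorem \ref{thm:coboundtp}) and the First Twisting Principle for the AHYBE (Theorem \ref{thm:ahybe1}). Recall that, by definition, a quasi-triangular $\epsilon$-Hom-bialgebra is a coboundary $\epsilon$-Hom-bialgebra $(A,\mu,\Delta,\alpha,r)$ in which $r$ is a solution of the AHYBE. So the whole task reduces to verifying these two properties for $A_\alpha$, and the key observation is that "quasi-triangular $\epsilon$-bialgebra" unpacks as "coboundary $\epsilon$-bialgebra $(A,\mu,\Delta=[-,r],r)$ together with the condition $A(r)=0$", which lets each cited twisting principle act on the appropriate piece of the structure.

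First I would note that, since $(A,\mu,\Delta,r)$ is in particular a coboundary $\epsilon$-bialgebra and the hypotheses on $\alpha$ are exactly that $\alpha$ is an algebra morphism with $\alpha^{\otimes 2}(r)=r$, Theorem \ref{thm:coboundtp} applies verbatim and shows that $A_\alpha = (A,\mualpha,\Deltaalpha,\alpha,r)$ is a coboundary $\epsilon$-Hom-bialgebra; in particular $\Deltaalpha = [-,r]_*$ in $A_\alpha$. Then I would invoke Theorem \ref{thm:ahybe1} with $n=0$: since $r$ is a solution of the AYBE in $(A,\mu)$ and $\alpha$ is an algebra morphism, $(\alpha^{\otimes 2})^0(r)=r$ is a solution of the AHYBE in the Hom-associative algebra $A_\alpha = (A,\mualpha,\alpha)$.

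Combining these two facts, $A_\alpha$ is a coboundary $\epsilon$-Hom-bialgebra whose $\alpha$-invariant $2$-tensor $r$ solves the AHYBE, which is precisely the assertion that $A_\alpha$ is quasi-triangular. As an alternative packaging of the same argument, one could observe that $r$ is $\alpha$-invariant by hypothesis and solves the AHYBE by Theorem \ref{thm:ahybe1}, and then apply Corollary \ref{cor:qtchar} directly to conclude that $(A,\mualpha,\Deltaalpha=[-,r]_*,\alpha,r)$ is quasi-triangular in one step.

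I do not expect any genuine obstacle here: the mathematical content is entirely contained in Theorems \ref{thm:coboundtp} and \ref{thm:ahybe1} (equivalently, in Theorems \ref{thm:firsttp} and \ref{thm:coboundchar} on which they rest), and the only thing that needs to be checked is that the hypotheses line up, which is immediate. The one point worth stating explicitly in the write-up is the decomposition of the quasi-triangular condition into its coboundary part and its AHYBE part, so that the reader sees why two separate twisting principles are being cited rather than one.
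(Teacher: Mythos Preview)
Your proposal is correct and follows exactly the same approach as the paper: apply Theorem \ref{thm:coboundtp} to get that $A_\alpha$ is a coboundary $\epsilon$-Hom-bialgebra, and apply the $n=0$ case of Theorem \ref{thm:ahybe1} to get that $r$ solves the AHYBE in $A_\alpha$. Your additional remark about the alternative route via Corollary \ref{cor:qtchar} is also valid, though the paper does not take it.
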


\begin{proof}
That $A_\alpha$ is a coboundary $\epsilon$-Hom-bialgebra follows from Theorem \ref{thm:coboundtp}.  That $r$ is a solution of the AHYBE in $A_\alpha$ follows from the $n=0$ case of Theorem \ref{thm:ahybe1}.
\end{proof}

\begin{corollary}
\label{cor:qt2}
Let $(A,\mu,\Delta,\alpha,r)$ be a quasi-triangular $\epsilon$-Hom-bialgebra. Then so is
\[
A^n = (A, \mun, \Deltan, \alpha^{2^n}, r)
\]
for each $n \geq 0$, where $\mun = \alpha^{2^n - 1}\circ\mu$ and $\Deltan = \Delta \circ \alpha^{2^n-1}$.
\end{corollary}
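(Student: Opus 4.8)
The plan is to deduce this Corollary by combining the two Second Twisting Principles that have already been proved: Theorem \ref{thm:coboundtp2} for coboundary $\epsilon$-Hom-bialgebras and Theorem \ref{thm:ahybe2} for solutions of the AHYBE. Recall from Definition \ref{def:ahybe} that a quasi-triangular $\epsilon$-Hom-bialgebra is precisely a coboundary $\epsilon$-Hom-bialgebra $(A,\mu,\Delta,\alpha,r)$ in which the $\alpha$-invariant $2$-tensor $r$ satisfies $A(r) = 0$. So the task splits into two independent assertions about the derived datum $A^n = (A,\mun,\Deltan,\alpha^{2^n},r)$: that it is a coboundary $\epsilon$-Hom-bialgebra, and that $r$ is a solution of the AHYBE inside the derived Hom-associative algebra $(A,\mun,\alpha^{2^n})$.

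First I would invoke Theorem \ref{thm:coboundtp2} directly: since $(A,\mu,\Delta,\alpha,r)$ is coboundary, so is $A^n = (A,\mun,\Deltan,\alpha^{2^n},r)$ for every $n \geq 0$, with $\mun = \alpha^{2^n-1}\circ\mu$ and $\Deltan = \Delta\circ\alpha^{2^n-1}$; in particular $\Deltan = [-,r]_*$ with respect to $\mun$ and $\alpha^{2^n}$, and $r$ is $(\alpha^{2^n})^{\otimes 2}$-invariant. Next I would invoke Theorem \ref{thm:ahybe2}: since $r$ solves the AHYBE in $(A,\mu,\alpha)$, it also solves the AHYBE in the derived Hom-associative algebra $A^n = (A,\mun,\alpha^{2^n})$. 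The one point worth noting is that the Hom-associative algebra underlying the coboundary $\epsilon$-Hom-bialgebra produced by Theorem \ref{thm:coboundtp2} coincides with the Hom-associative algebra appearing in Theorem \ref{thm:ahybe2} — both are $(A,\alpha^{2^n-1}\circ\mu,\alpha^{2^n})$ — so the two conclusions refer to the same structure and may be combined.

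Putting these together, $A^n$ is a coboundary $\epsilon$-Hom-bialgebra in which $r$ is a solution of the AHYBE, i.e., a quasi-triangular $\epsilon$-Hom-bialgebra, which is the claim. I expect no real obstacle here: all of the computational content has been carried out in the proofs of Theorems \ref{thm:coboundtp2} and \ref{thm:ahybe2}, and this proof is a verbatim analog of the proof of Corollary \ref{cor:qt1}, with the Second Twisting Principles taking the place of the First.
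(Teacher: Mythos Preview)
Your proposal is correct and follows exactly the paper's own proof, which simply cites Theorems \ref{thm:coboundtp2} and \ref{thm:ahybe2} as the Second Twisting Principles for coboundary $\epsilon$-Hom-bialgebras and for the AHYBE, respectively. Your additional remark that the underlying Hom-associative algebras in the two theorems coincide is a helpful clarification but not a departure from the paper's approach.
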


\begin{proof}
This follows immediately from the Second Twisting Principles for coboundary $\epsilon$-Hom-bialgebras (Theorem \ref{thm:coboundtp2}) and for the AHYBE (Theorem \ref{thm:ahybe2}).
\end{proof}

\begin{example}
\label{ex:unitalA'}
This is a continuation of Example \ref{ex:unitalA}, in which $(A,\mu)$ is a unital associative algebra, $a\in A$ is a square-zero element, and $r = 1 \otimes a$.  There is a quasi-triangular $\epsilon$-bialgebra $(A,\mu,\Delta,r)$ in which
\[
\Delta(b) = [b,r] = b \otimes a - 1 \otimes ab
\]
for $b \in A$ (\cite{aguiar} Example 5.4.1).  Let $\alpha \colon A \to A$ be an algebra morphism such that $\alpha^{\otimes 2}(r) = r$ (e.g., $\alpha(1) = 1$ and $\alpha(a) = a$).  By Corollary \ref{cor:qt1} there is a quasi-triangular $\epsilon$-Hom-bialgebra
\[
A_\alpha = (A,\mualpha=\alpha\circ\mu,\Deltaalpha= \Delta\circ\alpha,\alpha,r).
\]
More explicitly, we have
\[
\Deltaalpha(b) = \alpha(b) \otimes a - 1 \otimes a\alpha(b)
\]
for $b \in A$.
\qed
\end{example}

\begin{example}
\label{ex:nonunitalA'}
This is a continuation of Example \ref{ex:nonunitalA}, in which $(A,\mu)$ is a not-necessarily unital associative algebra, $b \in A$ is a square-zero element, and $r = b \otimes b$.  There is a quasi-triangular $\epsilon$-bialgebra $(A,\mu,\Delta,r)$ in which
\[
\Delta(a) = [a,r] = ab \otimes b - b \otimes ba
\]
for $a \in A$ (\cite{aguiar} Example 5.4.4).  Let $\alpha \colon A \to A$ be an algebra morphism such that $\alpha^{\otimes 2}(r) = r$ (e.g., $\alpha(b)=b$).  By Corollary \ref{cor:qt1} there is a quasi-triangular $\epsilon$-Hom-bialgebra
\[
A_\alpha = (A,\mualpha=\alpha\circ\mu,\Deltaalpha= \Delta\circ\alpha,\alpha,r).
\]
More explicitly, we have
\[
\Deltaalpha(a) = \alpha(a)b \otimes b - b \otimes b\alpha(a)
\]
for $b \in A$.
\qed
\end{example}

We end this section with several alternative characterizations of the AHYBE in a coboundary $\epsilon$-Hom-bialgebra.  We use the following notations.  Let $(A,\mu,\Delta,\alpha,r)$ be a coboundary $\epsilon$-Hom-bialgebra with $r = \sum u_i \otimes v_i$.  Recall that $A^*$ denotes the linear dual $\Hom(A,\bk)$ of $A$, and $\langle \phi,a\rangle = \phi(a)$ for $\phi \in A^*$ and $a \in A$.  If $A$ is finite dimensional, then $(A^*,\Delta^*,\mu^*,\alpha^*)$ is an $\epsilon$-Hom-bialgebra (see Proposition \ref{prop:dual}).  Actually $\Delta^*$ is defined even if $A$ is not finite dimensional, but $\mu^*$ is defined only when $A$ is finite dimensional.  Define the maps $\lambda_1,\lambda_2,\rho_1,\rho_2 \colon A^* \to A$ as follows:
\begin{equation}
\label{lambdarho}
\begin{split}
\lambda_1(\phi) &= \langle \phi,\alpha(u_i)\rangle v_i,\\
\lambda_2(\phi) &= \langle \phi,u_i\rangle \alpha(v_i),\\
\rho_1(\phi) &= u_i\langle \phi,\alpha(v_i)\rangle,\\
\rho_2(\phi) &= \alpha(u_i) \langle \phi,v_i\rangle
\end{split}
\end{equation}
for $\phi \in A^*$.
The following result gives several characterizations of the AHYBE in $A$.  It is a generalization of (\cite{aguiar} Propositions 5.5 and 5.6).

\begin{theorem}
\label{thm:cobahybe}
Let $(A,\mu,\Delta,\alpha,r)$ be a coboundary $\epsilon$-Hom-bialgebra with $r = \sum u_i \otimes v_i$.  Then the following statements, in which the last two only apply when $A$ is finite dimensional, are equivalent.
\begin{enumerate}
\item
$A$ is a quasi-triangular $\epsilon$-Hom-bialgebra, i.e., $A(r) = 0$ \eqref{ahybe}.
\item
$(\alpha \otimes \Delta)(r) = r_{13}r_{12}$.
\item
$(\Delta \otimes \alpha)(r) = -r_{23}r_{13}$.
\item
The square
\[
\nicearrow
\xymatrix{
A^* \otimes A^* \ar[rr]^{\lambda_1^{\otimes 2}} \ar[d]_-{\Delta^*} & & A \otimes A \ar[d]^-{-\muop}\\
A^* \ar[rr]^-{\lambda_2} & & A
}
\]
is commutative, where $\muop = \mu \circ \tau$.
\item
The square
\[
\nicearrow
\xymatrix{
A^* \otimes A^* \ar[rr]^{\rho_1^{\otimes 2}} \ar[d]_-{\Delta^*} & & A \otimes A \ar[d]^-{\muop}\\
A^* \ar[rr]^-{\rho_2} & & A
}
\]
is commutative.
\item
The square
\[
\nicearrow
\xymatrix{
A^* \ar[rr]^-{\lambda_1} \ar[d]_-{\mustarop} & & A \ar[d]^-{\Delta}\\
A^* \otimes A^* \ar[rr]^-{\lambda_2^{\otimes 2}} & & A \otimes A
}
\]
is commutative, where $\mustarop = \tau \circ \mu^*$.
\item
The square
\[
\nicearrow
\xymatrix{
A^* \ar[rr]^-{\rho_1} \ar[d]_-{\mustarop} & & A \ar[d]^-{-\Delta}\\
A^* \otimes A^* \ar[rr]^-{\rho_2^{\otimes 2}} & & A \otimes A
}
\]
is commutative.
\end{enumerate}
\end{theorem}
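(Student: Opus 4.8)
The plan is to prove all seven statements equivalent by establishing a small cycle plus a couple of direct equivalences, leveraging the explicit formula $\Delta = [-,r]_*$ and the definitions \eqref{ar} of the component $3$-tensors $r_{13}r_{12}$, $r_{12}r_{23}$, $r_{23}r_{13}$. The most efficient route is: first prove $(1)\Leftrightarrow(2)\Leftrightarrow(3)$ by a direct computation of $(\alpha\otimes\Delta)(r)$ and $(\Delta\otimes\alpha)(r)$; then prove $(2)\Leftrightarrow(4)\Leftrightarrow(6)$ and $(3)\Leftrightarrow(5)\Leftrightarrow(7)$ by dualizing, i.e., by pairing the tensor identities in (2) and (3) against elements of $A^*\otimes A^*$ (or $A^*$) and recognizing the maps $\lambda_1,\lambda_2,\rho_1,\rho_2$ of \eqref{lambdarho}.

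For the core equivalences, write $r=\sum u_i\otimes v_i$ and compute, using \eqref{starbracketa} and the $\alpha$-invariance of $r$,
\[
(\alpha\otimes\Delta)(r) = \sum \alpha(u_i)\otimes [v_i,r]_* = \sum \alpha(u_i)\otimes\bigl(\mu(v_i,u_j)\otimes\alpha(v_j) - \alpha(u_j)\otimes\mu(v_j,v_i)\bigr),
\]
which is exactly $r_{12}r_{23} - r_{23}r_{13}$ in the notation of \eqref{ar}. Similarly
\[
(\Delta\otimes\alpha)(r) = \sum [u_i,r]_*\otimes\alpha(v_i) = r_{13}r_{12} - r_{12}r_{23}.
\]
Hence $(\alpha\otimes\Delta)(r) - r_{13}r_{12} = -A(r)$ and $(\Delta\otimes\alpha)(r) + r_{23}r_{13} = A(r)$, giving $(1)\Leftrightarrow(2)$ and $(1)\Leftrightarrow(3)$ at once. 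This also shows that $(\alpha\otimes\Delta)^-(r) = (\alpha\otimes\Delta)(r) - (\Delta\otimes\alpha)(r) = 2r_{12}r_{23} - r_{13}r_{12} - r_{23}r_{13}$, a formula that may be recorded for use in Section \ref{sec:pertubation}, though it is not strictly needed here.

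For the dual characterizations, pair the identity in (2), namely $\sum\alpha(u_i)\otimes\mu(v_i,u_j)\otimes\alpha(v_j) - \sum\alpha(u_i)\otimes\alpha(u_j)\otimes\mu(v_j,v_i) = \sum\mu(u_i,u_j)\otimes\alpha(v_j)\otimes\alpha(v_i)$, in the first two tensor slots against $\phi\otimes\psi\in A^*\otimes A^*$. The left-hand side collapses, via the definitions \eqref{lambdarho} and $\Delta^*$, to $-\muop\bigl(\lambda_1(\phi)\otimes\lambda_1(\psi)\bigr)$ minus $\lambda_2(\Delta^*(\phi\otimes\psi))$-type terms; a careful bookkeeping of which of $\lambda_1,\lambda_2$ appears in which slot yields precisely the commutativity of the square in (4). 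Transposing instead in the last two slots, or dualizing only once, yields (6); the analogous pairings applied to (3) give (5) and (7). Here one uses $\mu^{*op}=\tau\circ\mu^*$ and the standard fact that $\langle\mu^*(\phi),a\otimes b\rangle = \langle\phi,ab\rangle$ from Proposition \ref{prop:dual}, together with finite-dimensionality so that $(A^*)^{\otimes 2}\cong(A^{\otimes 2})^*$.

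I expect the main obstacle to be purely organizational rather than conceptual: keeping straight, across the four maps $\lambda_1,\lambda_2,\rho_1,\rho_2$ and the two squares each, exactly which tensor factor each $\alpha$ lands on and which leg of $r$ is being paired, so that the signs and the placement of $\muop$ versus $\mustarop$ come out as displayed. Once the single identity $(\alpha\otimes\Delta)(r) = r_{12}r_{23} - r_{23}r_{13}$ and its companion for $(\Delta\otimes\alpha)(r)$ are in hand, every remaining equivalence is a matter of transposing a tensor identity into $\Hom$-spaces and matching notation, so I would present $(1)\Leftrightarrow(2)\Leftrightarrow(3)$ in full and then treat $(2)\Leftrightarrow(4),(6)$ and $(3)\Leftrightarrow(5),(7)$ by indicating the pairing and leaving the verification to the reader, as is done for the analogous Aguiar result.
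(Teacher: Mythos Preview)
Your approach is essentially the paper's: compute $(\alpha\otimes\Delta)(r)=r_{12}r_{23}-r_{23}r_{13}$ and $(\Delta\otimes\alpha)(r)=r_{13}r_{12}-r_{12}r_{23}$ to get $(1)\Leftrightarrow(2)\Leftrightarrow(3)$, then obtain $(4)$--$(7)$ by pairing $A(r)$ against functionals in suitable slots. One bookkeeping correction: your routing of which formula feeds which diagram is swapped. The square in $(4)$ involves $\lambda_2(\Delta^*(\phi\otimes\psi))=\langle\phi\otimes\psi,\Delta(u_i)\rangle\alpha(v_i)$, so it comes from pairing $A(r)=(\Delta\otimes\alpha)(r)+r_{23}r_{13}$ (the formula behind $(3)$) in the first two slots, not from the identity in $(2)$; likewise $(5)$ and $(6)$ use the $(\alpha\otimes\Delta)$ expression, while $(7)$ uses the $(\Delta\otimes\alpha)$ expression paired in the last slot. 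Pairing the identity in $(2)$ in slots $1,2$ as you propose produces terms like $\langle\phi,\alpha(u_i)\rangle\langle\psi,v_iu_j\rangle\alpha(v_j)$ and $\langle\phi,u_iu_j\rangle\langle\psi,\alpha(v_j)\rangle\alpha(v_i)$, which do not collapse to the $\lambda$-maps in $(4)$ without further manipulation. This is precisely the organizational hazard you flagged, not a conceptual gap.
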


\begin{proof}
In the coboundary $\epsilon$-Hom-bialgebra $A$, the comultiplication $\Delta = [-,r]_*$ is given by
\[
\Delta(a) = au_i \otimes \alpha(v_i) - \alpha(u_i) \otimes v_ia,
\]
as discussed in \eqref{starbracketa}.  Thus, with the notations in \eqref{ar}, we have
\begin{equation}
\label{alphadeltar}
\begin{split}
(\alpha \otimes \Delta)(r) &= \alpha(u_i) \otimes \Delta(v_i)\\
&= \alpha(u_i) \otimes v_iu_j \otimes \alpha(v_j) - \alpha(u_i) \otimes \alpha(u_j) \otimes v_jv_i\\
&= r_{12}r_{23} - r_{23}r_{13}.
\end{split}
\end{equation}
This shows the equivalence between statements (1) and (2).  Likewise, we have
\begin{equation}
\label{deltaalphar}
\begin{split}
(\Delta \otimes \alpha)(r) &= \Delta(u_i) \otimes \alpha(v_i)\\
&= u_iu_j \otimes \alpha(v_j) \otimes  \alpha(v_i) - \alpha(u_j) \otimes v_ju_i \otimes \alpha(v_i)\\
&= r_{13}r_{12} - r_{12}r_{23},
\end{split}
\end{equation}
from which the equivalence between statements (1) and (3) follows.

For the equivalence between statements (1) and (4), note that $A(r) = 0$ (as in \eqref{ahybe}) if and only if
\begin{equation}
\label{4'}
\langle \phi \otimes \psi \otimes Id_A, A(r)\rangle = 0
\end{equation}
for all $\phi, \psi \in A^*$.  By \eqref{deltaalphar} we have
\begin{equation}
\label{ar'}
\begin{split}
A(r) &= r_{13}r_{12} - r_{12}r_{23} + r_{23}r_{13}\\
&= (\Delta \otimes \alpha)(r) + r_{23}r_{13}\\
&= \Delta(u_i) \otimes \alpha(v_i) + \alpha(u_i) \otimes \alpha(u_j) \otimes v_jv_i.
\end{split}
\end{equation}
Therefore, \eqref{4'} is equivalent to
\[
\begin{split}
0 &= \langle \phi \otimes \psi \otimes Id_A, \Delta(u_i) \otimes \alpha(v_i) + \alpha(u_i) \otimes \alpha(u_j) \otimes v_jv_i \rangle\\
&= \langle \Delta^*(\phi \otimes \psi),u_i\rangle \alpha(v_i) + \langle \phi,\alpha(u_i)\rangle \langle \psi,\alpha(u_j)\rangle v_jv_i\\
&= \lambda_2(\Delta^*(\phi \otimes \psi)) + \muop(\lambda_1^{\otimes 2}(\phi \otimes \psi)).
\end{split}
\]
This proves the equivalence between statements (1) and (4).

For the equivalence between statements (1) and (5), note that $A(r) = 0$ (as in \eqref{ahybe}) if and only if
\begin{equation}
\label{5'}
\langle Id_A \otimes \phi \otimes \psi, A(r)\rangle = 0
\end{equation}
for all $\phi, \psi \in A^*$.  By \eqref{alphadeltar} we have
\begin{equation}
\label{ar''}
\begin{split}
A(r) &= r_{13}r_{12} - r_{12}r_{23} + r_{23}r_{13}\\
&= r_{13}r_{12} - (\alpha \otimes \Delta)(r)\\
&= u_iu_j \otimes \alpha(v_j) \otimes \alpha(v_i) - \alpha(u_i) \otimes \Delta(v_i).
\end{split}
\end{equation}
Therefore, \eqref{5'} is equivalent to
\[
\begin{split}
0 &= \langle Id_A \otimes \phi \otimes \psi, u_iu_j \otimes \alpha(v_j) \otimes \alpha(v_i) - \alpha(u_i) \otimes \Delta(v_i)\rangle\\
&= u_iu_j\langle \phi,\alpha(v_j)\rangle\langle \psi,\alpha(v_i)\rangle - \alpha(u_i) \langle\Delta^*(\phi \otimes \psi),v_i\rangle\\
&= \muop(\rho_1^{\otimes 2}(\phi \otimes \psi)) - \rho_2(\Delta^*(\psi \otimes \psi)).
\end{split}
\]
This proves the equivalence between statements (1) and (5).

To prove the equivalence between statements (1) and (6), observe that $A(r) = 0$ if and only if
\[
\langle \phi \otimes Id_A \otimes Id_A, A(r)\rangle = 0
\]
for all $\phi \in A^*$.  Then we use \eqref{ar''} and proceed as above.   To prove the equivalence between statements (1) and (7), observe that $A(r) = 0$ if and only if
\[
\langle Id_A \otimes Id_A \otimes \phi, A(r)\rangle = 0
\]
for all $\phi \in A^*$.  Then we use \eqref{ar'} and proceed as above.
\end{proof}

\section{From $\epsilon$-Hom-bialgebras to Hom-Lie bialgebras}
\label{sec:homlie}

The relationship between $\epsilon$-bialgebras and Lie bialgebras has been studied by Aguiar \cite{aguiar2}.  The purpose of this section is to describe a condition on an $\epsilon$-Hom-bialgebra (or a coboundary $\epsilon$-Hom-bialgebra) so that its commutator bracket and cocommutator cobracket give a (coboundary) Hom-Lie bialgebra \cite{yau8} (see Corollary \ref{cor:homliebi} and Theorem \ref{thm:coboundhomlie}).  The transitions from solutions of the AHYBE \eqref{ahybe} to solutions of the CHYBE and from quasi-triangular $\epsilon$-Hom-bialgebras to quasi-triangular Hom-Lie bialgebras are studied in the next section.  The results in this section are Hom-type generalizations of some of those in \cite{aguiar2} (section 4).

As discussed in \cite{yau8} (section 3), a Hom-Lie bialgebra is simultaneously a Hom-Lie algebra and a Hom-Lie coalgebra in which the cobracket is a $1$-cocycle in Hom-Lie algebra cohomology \cite{ms3}.  This suggests a close relationship between $\epsilon$-Hom-bialgebras and Hom-Lie bialgebras, since the comultiplication in an $\epsilon$-Hom-bialgebra is a $1$-cocycle in Hom-associative algebra cohomology (Remark \ref{rk:cocycle}).  This is indeed the case, as we describe below.

Let us first recall the definition of a Hom-Lie bialgebra from \cite{yau8}.

\begin{definition}
\label{def:homlie}
Let $\sigma$ denote the cyclic permutation $(1\, 2\, 3)$.
\begin{enumerate}
\item
A \textbf{Hom-Lie algebra} \cite{hls,ms} $(L,[-,-],\alpha)$ consists of a vector space $L$, a linear map $\alpha \colon L \to L$ (the twisting map), and an anti-symmetric bilinear operation $[-,-] \colon L^{\otimes 2} \to L$ (the bracket) such that the following conditions are satisfied: (1) $\alpha \circ [-,-] = [-,-] \circ \alpha^{\otimes 2}$ (multiplicativity) and (2) the \textbf{Hom-Jacobi identity},
\begin{equation}
\label{homjacobi}
[-,-]\circ([-,-] \otimes \alpha)\circ(Id + \sigma + \sigma^2) = 0.
\end{equation}
For an element $x$ in a Hom-Lie algebra $(L,[-,-],\alpha)$ and $n \geq 2$, define the \emph{adjoint map} $\ad_x \colon L^{\otimes n} \to L^{\otimes n}$ by
\begin{equation}
\label{eq:ad}
\ad_x(y_1 \otimes \cdots \otimes y_n) = \sum_{i=1}^n \alpha(y_1) \otimes \cdots \otimes \alpha(y_{i-1}) \otimes [x,y_i] \otimes \alpha(y_{i+1}) \cdots \otimes \alpha(y_n).
\end{equation}
Conversely, given $\gamma = \sum y_1 \otimes \cdots \otimes y_n$, we define the map $\ad(\gamma) \colon L \to L^{\otimes n}$ by $\ad(\gamma)(x) = \ad_x(\gamma)$.
\item
A \textbf{Hom-Lie coalgebra} \cite{ms2,ms4} $(L,\delta,\alpha)$ consists of a vector space $L$, a linear self $\alpha \colon L \to L$ (the twisting map), and a linear map $\delta \colon L \to L^{\otimes 2}$ (the cobracket) such that the following conditions are satisfied: (1) $\delta \circ \alpha = \alpha^{\otimes 2} \circ \delta$ (comultiplicativity), (2) $\tau \circ \delta = -\delta$ (anti-symmetry), and (3) the \textbf{Hom-co-Jacobi identity}
\begin{equation}
\label{homcojacobi}
(Id + \sigma + \sigma^2) \circ (\alpha \otimes \delta) \circ \delta = 0.
\end{equation}
\item
A \textbf{Hom-Lie bialgebra} $(L,[-,-],\delta,\alpha)$ is a quadruple in which $(L,[-,-],\alpha)$ is a Hom-Lie algebra and $(L,\delta,\alpha)$ is a Hom-Lie coalgebra such that the following compatibility condition holds for all $x, y \in L$:
\begin{equation}
\label{eq:compatibility}
\delta([x,y]) = \ad_{\alpha(x)}(\delta(y)) - \ad_{\alpha(y)}(\delta(x)).
\end{equation}
\end{enumerate}
\end{definition}

In terms of elements $x,y,z \in L$, the Hom-Jacobi identity \eqref{homjacobi} says that
\[
[[x,y],\alpha(z)] + [[z,x],\alpha(y)] + [[y,z],\alpha(x)] = 0.
\]
A Hom-Lie algebra with $\alpha = Id$ is exactly a Lie algebra.  If $(A,\mu,\alpha)$ is a Hom-associative algebra (Definition \ref{def:homas}), then $(A,[-,-],\alpha)$ is a Hom-Lie algebra, where $[-,-] = \mu \circ (Id - \tau)$ is the commutator bracket of $\mu$ \cite{ms} (Proposition 1.7).  Dually, if $(A,\Delta,\alpha)$ is a Hom-coassociative coalgebra, then $(A,\delta,\alpha)$ is a Hom-Lie coalgebra, where $\delta = (Id - \tau) \circ \Delta$ is the cocommutator cobracket of $\Delta$ \cite{ms2} (Proposition 2.8).

A Hom-Lie bialgebra with $\alpha = Id$ is exactly a Lie bialgebra in the sense of Drinfel'd \cite{dri83,dri87}.  From the definition of the adjoint map \eqref{eq:ad}, the compatibility condition \eqref{eq:compatibility} is equivalent to
\begin{equation}
\label{eq:compat}
\begin{split}
\delta([x,y]) &= [\alpha(x),y_1] \otimes \alpha(y_2) + \alpha(y_1) \otimes [\alpha(x),y_2]\\
&\relphantom{}- [\alpha(y),x_1] \otimes \alpha(x_2) - \alpha(x_1) \otimes [\alpha(y),x_2],
\end{split}
\end{equation}
where $\delta(x) = \sum x_1 \otimes x_2$ and $\delta(y) = \sum y_1 \otimes y_2$.  As explained in \cite{yau8}, the condition \eqref{eq:compatibility} says that $\delta$ is a $1$-cocycle in the Hom-Lie algebra cohomology of $L$ with coefficients in $L^{\otimes 2}$ \cite{ms3}.  This is the Hom-type analog of the fact that in a Lie bialgebra, the cobracket is a $1$-cocycle in Chevalley-Eilenberg cohomology with coefficients in the tensor-square.  Properties and examples of Hom-Lie bialgebras and of the corresponding classical Hom-Yang-Baxter equation (CHYBE) can be found in \cite{yau8}.

To state the main result of this section, we need the following concept, which is the Hom-type generalization of the corresponding concept in \cite{aguiar2}.

\begin{definition}
\label{def:balance}
Let $(A,\mu,\Delta,\alpha)$ be an $\epsilon$-Hom-bialgebra.  Define the map $B \colon A^{\otimes 2} \to A^{\otimes 2}$, called the \textbf{balanceator}, by
\begin{equation}
\label{balanceator}
B(x,y) = [x,\Deltaop(y)]_\bullet + \tau([y,\Deltaop(x)]_\bullet)
\end{equation}
for $x,y \in A$, where $\Deltaop = \tau \circ \Delta$ and $[-,-]_\bullet$ is defined in \eqref{dotbracket}.  The balanceator is said to be \textbf{symmetric} if $B = B \circ \tau$, i.e., $B(x,y) = B(y,x)$ for all $x,y \in A$.
\end{definition}

Writing $\Delta(x) = \sum x_1 \otimes x_2$ and $\Delta(y) = \sum y_1 \otimes y_2$, the balanceator is given by
\[
\label{B}
\begin{split}
B(x,y) &= [x,y_2 \otimes y_1]_\bullet + \tau([y,x_2 \otimes x_1]_\bullet)\\
&= \alpha(x)y_2 \otimes \alpha(y_1) - \alpha(y_2) \otimes y_1\alpha(x) + \alpha(x_1) \otimes \alpha(y)x_2 - x_1\alpha(y) \otimes \alpha(x_2).
\end{split}
\]
When $A$ is an $\epsilon$-bialgebra (i.e., $\alpha = Id$), the above definition of the balanceator coincides with that of Aguiar \cite{aguiar2}.

We are now ready to state the main result of this section, which tells us exactly how far \eqref{eq:compatibility} is from being true in an $\epsilon$-Hom-bialgebra in terms of the balanceator.

\begin{theorem}
\label{thm:homliebi}
Let $(A,\mu,\Delta,\alpha)$ be an $\epsilon$-Hom-bialgebra.  Define the commutative bracket $[-,-] = \mu \circ (Id - \tau)$ and the cocommutator cobracket $\delta = (Id - \tau) \circ \Delta$.  Then we have
\begin{equation}
\label{deltaB}
\delta([x,y]) = \ad_{\alpha(x)}(\delta(y)) - \ad_{\alpha(y)}(\delta(x)) + B(x,y) - B(y,x)
\end{equation}
for all $x,y \in A$.
\end{theorem}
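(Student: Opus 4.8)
The plan is to verify \eqref{deltaB} by a direct expansion of both sides in Sweedler notation. Write $\Delta(x) = \sum x_1 \otimes x_2$ and $\Delta(y) = \sum y_1 \otimes y_2$; as in the conventions I suppress summation signs and write $\mu(a,b) = ab$. The whole proof is a bookkeeping exercise, so the work will be to organize the terms so that the cancellations are visible.

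First I would compute the left-hand side. Since $\delta = (Id - \tau)\circ\Delta$ and $[x,y] = xy - yx$, we have $\delta([x,y]) = (Id - \tau)\bigl(\Delta(xy) - \Delta(yx)\bigr)$. Applying the cocycle identity \eqref{cocycle'} to $\Delta(xy)$ and to $\Delta(yx)$, and then subtracting $\tau$ of the result, yields $\delta([x,y])$ as an explicit sum of eight terms, each a tensor of two elements of $A$ carrying exactly one Sweedler leg of $x$ or $y$ and one factor $\alpha(x)$ or $\alpha(y)$ (namely $\alpha(x)y_1 \otimes \alpha(y_2)$, $\alpha(x_1) \otimes x_2\alpha(y)$, $-\alpha(y)x_1 \otimes \alpha(x_2)$, $-\alpha(y_1) \otimes y_2\alpha(x)$, and $\tau$ of each with sign $-1$).

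Next I would expand the right-hand side. Using \eqref{eq:ad} in the case $n = 2$, i.e. $\ad_z(a \otimes b) = [z,a] \otimes \alpha(b) + \alpha(a) \otimes [z,b]$, applied to $\delta(y) = y_1 \otimes y_2 - y_2 \otimes y_1$ and to $\delta(x)$, and then writing out each commutator bracket, expresses $\ad_{\alpha(x)}(\delta(y)) - \ad_{\alpha(y)}(\delta(x))$ as sixteen terms. For the balanceator I would use the explicit formula for $B(x,y)$ displayed just after Definition \ref{def:balance}, so that $B(x,y) - B(y,x)$ supplies eight more terms. Adding these twenty-four terms, one finds that sixteen of them cancel in eight pairs: each of the four terms of $B(x,y)$ cancels one of the adjoint terms (for instance the summand $\alpha(x)y_2 \otimes \alpha(y_1)$ of $B(x,y)$ cancels the term $-\alpha(x)y_2 \otimes \alpha(y_1)$ coming from $-\ad_{\alpha(x)}$ applied to $y_2 \otimes y_1$), and likewise each of the four terms of $-B(y,x)$ cancels an adjoint term. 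The eight surviving terms are then seen to coincide exactly with the eight terms of $\delta([x,y])$ computed above, which proves \eqref{deltaB}.

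The only ingredients used are the cocycle condition \eqref{cocycle'} and the definitions of the commutator bracket, the cocommutator cobracket, the adjoint map $\ad$, and the balanceator $B$; multiplicativity and comultiplicativity of $\alpha$ enter only through the shape of those definitions. There is no conceptual obstacle — the main point is simply to keep careful track, term by term, of which tensor factor carries which Sweedler index and which occurrence of $\alpha$, so that the eight-on-eight match with $\delta([x,y])$ is transparent. I would present the cancellations grouped as "four terms of $B(x,y)$ against four adjoint terms" and "four terms of $B(y,x)$ against four adjoint terms," which makes the remaining eight terms line up directly with the left-hand side.
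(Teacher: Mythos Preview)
Your proposal is correct and follows essentially the same direct-expansion approach as the paper's proof. The only organizational difference is that the paper introduces the antisymmetrization shorthand $(f(x,y))^- = f(x,y) - f(y,x)$ and recognizes four of the eight terms in $\ad_{\alpha(x)}(\delta(y))$ as $\tau([x,\Deltaop(y)]_\bullet) - [x,\Deltaop(y)]_\bullet$, thereby halving the bookkeeping by symmetry; your fully expanded $24$-term match accomplishes the same cancellation without this shortcut.
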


For the proof of Theorem \ref{thm:homliebi}, we adapt the shorthand \eqref{adminus} as follows.  If $f(x,y)$ is an expression involving $x$ and $y$, then
\[
(f(x,y))^- = f(x,y) - f(y,x).
\]
In particular, the right-hand side of \eqref{deltaB} becomes $(\ad_{\alpha(x)}(\delta(y)) + B(x,y))^-$.

\begin{proof}[Proof of Theorem \ref{thm:homliebi}]
Using the cocycle condition \eqref{cocycle} in $A$, the left-hand side of \eqref{deltaB} expands as:
\begin{equation}
\label{deltalhs}
\begin{split}
\delta([x,y])
&= (Id - \tau)(\Delta(xy)) - (Id - \tau)(\Delta(yx))\\
&= \alpha(x)y_1 \otimes \alpha(y_2) + \alpha(x_1) \otimes x_2\alpha(y) - \alpha(y_2)\otimes \alpha(x)y_1 - x_2\alpha(y) \otimes \alpha(x_1)\\
&\relphantom{} - \alpha(y)x_1 \otimes \alpha(x_2) - \alpha(y_1) \otimes y_2\alpha(x) + \alpha(x_2) \otimes \alpha(y)x_1 + y_2\alpha(x) \otimes \alpha(y_1)\\
&= \left(\alpha(x)y_1 \otimes \alpha(y_2 )- \alpha(y_1) \otimes y_2\alpha(x) + y_2\alpha(x) \otimes \alpha(y_1) - \alpha(y_2) \otimes \alpha(x)y_1\right)^-.
\end{split}
\end{equation}
For the right-hand side of \eqref{deltaB}, first observe that:
\begin{equation}
\label{deltarhs1}
\begin{split}
\ad_{\alpha(x)}(\delta(y))
&= \ad_{\alpha(x)}(y_1 \otimes y_2 - y_2 \otimes y_1)\\
&= [\alpha(x),y_1] \otimes \alpha(y_2) + \alpha(y_1) \otimes [\alpha(x),y_2] - [\alpha(x),y_2] \otimes \alpha(y_1) - \alpha(y_2) \otimes [\alpha(x),y_1]\\
&= \alpha(x)y_1 \otimes \alpha(y_2) - y_1\alpha(x) \otimes \alpha(y_2) + \alpha(y_1) \otimes \alpha(x)y_2 - \alpha(y_1) \otimes y_2\alpha(x)\\
&\relphantom{} - \alpha(x)y_2 \otimes \alpha(y_1) + y_2\alpha(x) \otimes \alpha(y_1) - \alpha(y_2) \otimes \alpha(x)y_1 + \alpha(y_2) \otimes y_1\alpha(x)\\
&= \alpha(x)y_1 \otimes \alpha(y_2 )- \alpha(y_1) \otimes y_2\alpha(x) + y_2\alpha(x) \otimes \alpha(y_1) - \alpha(y_2) \otimes \alpha(x)y_1\\
&\relphantom{} + \tau([x,\Deltaop(y)]_\bullet) - [x,\Deltaop(y)]_\bullet.
\end{split}
\end{equation}
Using \eqref{deltalhs} and \eqref{deltarhs1}, the right-hand side of \eqref{deltaB} is:
\[
\begin{split}
&(\ad_{\alpha(x)}(\delta(y)) + B(x,y))^-\\
&= (\alpha(x)y_1 \otimes \alpha(y_2 )- \alpha(y_1) \otimes y_2\alpha(x) + y_2\alpha(x) \otimes \alpha(y_1) - \alpha(y_2) \otimes \alpha(x)y_1)^-\\
&\relphantom{} + (\tau([x,\Deltaop(y)]_\bullet) + \tau([y,\Deltaop(x)]_\bullet))^-\\
&= \delta([x,y]),
\end{split}
\]
as desired.
\end{proof}

In the context of Theorem \ref{thm:homliebi}, $(A,[-,-],\alpha)$ is a Hom-Lie algebra, and $(A,\delta,\alpha)$ is a Hom-Lie coalgebra.  Comparing the compatibility condition \eqref{eq:compatibility} in a Hom-Lie bialgebra with \eqref{deltaB}, we obtain the following immediate consequence of Theorem \ref{thm:homliebi}.  It tells us exactly when one can associate a Hom-Lie bialgebra to an $\epsilon$-Hom-bialgebra via the (co)commutator (co)bracket.

\begin{corollary}
\label{cor:homliebi}
Let $(A,\mu,\Delta,\alpha)$ be an $\epsilon$-Hom-bialgebra.  Then
\[
A_{HLie} = (A,[-,-],\delta,\alpha)
\]
is a Hom-Lie bialgebra if and only if the balanceator $B$ is symmetric, where $[-,-] = \mu \circ (Id - \tau)$ and $\delta = (Id - \tau) \circ \Delta$.
\end{corollary}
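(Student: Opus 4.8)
The plan is to deduce this directly from Theorem \ref{thm:homliebi}, so the argument is short. First I would recall the two structural facts already quoted in the text preceding Definition \ref{def:balance}: since $(A,\mu,\alpha)$ is a Hom-associative algebra, its commutator bracket $[-,-] = \mu\circ(Id - \tau)$ makes $(A,[-,-],\alpha)$ a Hom-Lie algebra \cite{ms}; dually, since $(A,\Delta,\alpha)$ is a Hom-coassociative coalgebra, its cocommutator cobracket $\delta = (Id - \tau)\circ\Delta$ makes $(A,\delta,\alpha)$ a Hom-Lie coalgebra \cite{ms2}. In particular, multiplicativity, anti-symmetry, the Hom-Jacobi identity, and the Hom-co-Jacobi identity are all automatic for $A_{HLie}$. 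Hence, by Definition \ref{def:homlie}, $A_{HLie}$ is a Hom-Lie bialgebra if and only if the single remaining axiom holds, namely the compatibility condition \eqref{eq:compatibility}: $\delta([x,y]) = \ad_{\alpha(x)}(\delta(y)) - \ad_{\alpha(y)}(\delta(x))$ for all $x,y\in A$.

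Next I would invoke Theorem \ref{thm:homliebi}, which supplies the exact identity
\[
\delta([x,y]) = \ad_{\alpha(x)}(\delta(y)) - \ad_{\alpha(y)}(\delta(x)) + B(x,y) - B(y,x)
\]
valid for all $x,y\in A$, where $B$ is the balanceator of Definition \ref{def:balance}. Comparing this with the compatibility condition, the condition \eqref{eq:compatibility} is seen to be equivalent to the vanishing of the ``error term'' $B(x,y) - B(y,x)$ for all $x,y\in A$, which is by definition precisely the statement that the balanceator $B$ is symmetric ($B = B\circ\tau$).

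Assembling these observations then finishes the proof: $A_{HLie}$ is a Hom-Lie bialgebra $\Longleftrightarrow$ the compatibility condition \eqref{eq:compatibility} holds $\Longleftrightarrow$ $B(x,y) = B(y,x)$ for all $x,y\in A$ $\Longleftrightarrow$ $B$ is symmetric. There is essentially no obstacle here beyond already having Theorem \ref{thm:homliebi} in hand; the only point requiring a moment's care is to confirm that the remaining Hom-Lie bialgebra axioms are genuinely inherited from the underlying Hom-associative algebra and Hom-coassociative coalgebra, but this is exactly the content of the cited results \cite{ms} and \cite{ms2}, so nothing new needs to be verified.
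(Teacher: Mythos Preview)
Your proposal is correct and follows essentially the same approach as the paper: the paper likewise notes (in the paragraph immediately preceding the corollary) that $(A,[-,-],\alpha)$ is a Hom-Lie algebra and $(A,\delta,\alpha)$ is a Hom-Lie coalgebra, and then obtains the corollary by comparing the compatibility condition \eqref{eq:compatibility} with the identity \eqref{deltaB} from Theorem \ref{thm:homliebi}.
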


Next we discuss construction results for $\epsilon$-Hom-bialgebras whose balanceators are symmetric, for which Corollary \ref{cor:homliebi} can be used to construct Hom-Lie bialgebras.  The following result is the First Twisting Principle for such $\epsilon$-Hom-bialgebras.

\begin{theorem}
\label{thm:Btp}
Let $(A,\mu,\Delta)$ be an $\epsilon$-bialgebra with balanceator $B$ and $\alpha \colon A \to A$ be an $\epsilon$-bialgebra morphism.  Then the balanceator $B_\alpha$ of the $\epsilon$-Hom-bialgebra $A_\alpha$ is given by
\[
B_\alpha = (\alpha^2)^{\otimes 2} \circ B,
\]
where $A_\alpha = (A,\mualpha = \alpha\circ\mu,\Deltaalpha=\Delta\circ\alpha,\alpha)$ is the $\epsilon$-Hom-bialgebra in Theorem \ref{thm:firsttp}.  In particular, if $B$ is symmetric, then so is $B_\alpha$.  Conversely, if $B_\alpha$ is symmetric and $\alpha^{\otimes 2}$ is injective, then $B$ is symmetric.
\end{theorem}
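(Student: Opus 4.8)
The plan is to compute the balanceator $B_\alpha$ of $A_\alpha$ directly from its definition \eqref{balanceator} and compare it term by term with $(\alpha^2)^{\otimes 2}\circ B$. The two facts that do the work are: (i) since $\alpha$ is an $\epsilon$-bialgebra morphism it is in particular a coalgebra morphism, so the twisted comultiplication satisfies $\Deltaalpha = \Delta\circ\alpha = \alpha^{\otimes 2}\circ\Delta$, i.e. $\Deltaalpha(x) = \sum \alpha(x_1)\otimes\alpha(x_2)$ when $\Delta(x) = \sum x_1\otimes x_2$; and (ii) since $\alpha$ is an algebra morphism and $\mualpha = \alpha\circ\mu$, we have $\mualpha(\alpha(a),\alpha(b)) = \alpha\bigl(\mu(\alpha(a),\alpha(b))\bigr) = \alpha\bigl(\alpha(\mu(a,b))\bigr) = \alpha^2(ab)$ for all $a,b\in A$. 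This is the same package of identities used to prove Theorem \ref{thm:firsttp}, and the computation here is a shade simpler.

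Concretely, I would start from the element-level formula for the balanceator written just after Definition \ref{def:balance}, applied to the $\epsilon$-Hom-bialgebra $A_\alpha$ (so that juxtaposition there means $\mualpha$ and the Sweedler components are those of $\Deltaalpha$). Substituting $\Deltaalpha(x) = \sum\alpha(x_1)\otimes\alpha(x_2)$ and $\Deltaalpha(y) = \sum\alpha(y_1)\otimes\alpha(y_2)$, each of the four terms of $B_\alpha(x,y)$ has one tensor slot of the form $\mualpha(\alpha(-),\alpha(-))$ and one of the form $\alpha(\alpha(-))$; by fact (ii) the first collapses to $\alpha^2(-\,\cdot\,-)$ and the second is $\alpha^2(-)$. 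The upshot is
\[
B_\alpha(x,y) = \alpha^2(xy_2)\otimes\alpha^2(y_1) - \alpha^2(y_2)\otimes\alpha^2(y_1 x) + \alpha^2(x_1)\otimes\alpha^2(yx_2) - \alpha^2(x_1 y)\otimes\alpha^2(x_2),
\]
which is visibly $(\alpha^2)^{\otimes 2}$ applied to the four-term expression for $B(x,y)$ in the $\epsilon$-bialgebra $A$ (where $\alpha = Id$). Hence $B_\alpha = (\alpha^2)^{\otimes 2}\circ B$.

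The two remaining assertions are then immediate. If $B = B\circ\tau$, then $B_\alpha = (\alpha^2)^{\otimes 2}\circ B = (\alpha^2)^{\otimes 2}\circ B\circ\tau = B_\alpha\circ\tau$, so $B_\alpha$ is symmetric. Conversely, if $\alpha^{\otimes 2}$ is injective then so is $(\alpha^2)^{\otimes 2} = (\alpha^{\otimes 2})\circ(\alpha^{\otimes 2})$ as a composite of injections, and from $(\alpha^2)^{\otimes 2}\circ B = B_\alpha = B_\alpha\circ\tau = (\alpha^2)^{\otimes 2}\circ B\circ\tau$ we may cancel $(\alpha^2)^{\otimes 2}$ on the left to get $B = B\circ\tau$. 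I do not anticipate any real obstacle; the only thing to watch is the bookkeeping with Sweedler's notation and invoking the morphism hypothesis on $\alpha$ in exactly the right two places (coalgebra morphism for $\Deltaalpha = \alpha^{\otimes 2}\circ\Delta$, algebra morphism for the collapse of the twisted products).
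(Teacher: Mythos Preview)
Your proposal is correct and follows essentially the same route as the paper's proof: both use $\Deltaalpha = \alpha^{\otimes 2}\circ\Delta$ and $\mualpha(\alpha(a),\alpha(b)) = \alpha^2(ab)$ to reduce each term of $B_\alpha(x,y)$ to $(\alpha^2)^{\otimes 2}$ applied to the corresponding term of $B(x,y)$. The only cosmetic difference is that the paper first isolates $[x,\Deltaalphaop(y)]_\bullet = (\alpha^2)^{\otimes 2}[x,\Deltaop(y)]$ and then assembles $B_\alpha$, whereas you expand all four terms at once.
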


\begin{proof}
For an element $y \in A$, write $\Delta(y) = \sum y_1 \otimes y_2$.  Then we have $\Deltaalpha(y) = \sum \alpha(y_1) \otimes \alpha(y_2)$.  In the $\epsilon$-Hom-bialgebra $A_\alpha$, we have
\[
\begin{split}
[x,\Deltaalphaop(y)]_\bullet
&= x \bullet (\alpha(y_2) \otimes \alpha(y_1)) - (\alpha(y_2) \otimes \alpha(y_1)) \bullet x\\
&= \mualpha(\alpha(x),\alpha(y_2)) \otimes \alpha^2(y_1) - \alpha^2(y_2) \otimes \mualpha(\alpha(y_1),\alpha(x))\\
&= (\alpha^2)^{\otimes 2}(\mu(x,y_2) \otimes y_1 - y_2 \otimes \mu(y_1,x))\\
&= (\alpha^2)^{\otimes 2}([x,\Deltaop(y)]).
\end{split}
\]
It follows that
\[
\begin{split}
B_\alpha(x,y)
&= [x,\Deltaalphaop(y)]_\bullet + \tau[y,\Deltaalphaop(x)]_\bullet\\
&= (\alpha^2)^{\otimes 2}([x,\Deltaop(y)] + \tau[y,\Deltaop(x)])\\
&= (\alpha^2)^{\otimes 2}(B(x,y)),
\end{split}
\]
as desired.  The last two assertions of the Theorem follow immediately from the first assertion, which we just established.
\end{proof}


\begin{example}
Let $(A,\mu,\Delta)$ be an $\epsilon$-bialgebra that is both commutative and cocommutative.  Then its balanceator $B = 0$ (\cite{aguiar2} Proposition 4.5). If $\alpha$ is any $\epsilon$-bialgebra morphism on $A$, then it follows from Theorem \ref{thm:Btp} that $B_\alpha = 0$, which is trivially symmetric.  In this case, Corollary \ref{cor:homliebi} applies to the $\epsilon$-Hom-bialgebra $A_\alpha$ to yield a Hom-Lie bialgebra.\qed
\end{example}

\begin{example}
Let $(A,\mu,\Delta,r)$ be a coboundary $\epsilon$-bialgebra in which $r^+$ is $A$-invariant, where $r^+ = (r + \tau(r))/2$.  Then its balanceator $B = 0$ (\cite{aguiar2} Proposition 4.7).  If $\alpha$ is any $\epsilon$-bialgebra morphism on $A$, then it follows from Theorem \ref{thm:Btp} that $B_\alpha = 0$, which is trivially symmetric.  In this case, Corollary \ref{cor:homliebi} applies to the $\epsilon$-Hom-bialgebra $A_\alpha$ to yield a Hom-Lie bialgebra.\qed
\end{example}

\begin{example}
Let $A$ be the truncated polynomial algebra $\bk[x]/(x^4)$.  It is an $\epsilon$-bialgebra with comultiplication
\[
\Delta(x^i) = x^i \otimes x^2 - 1 \otimes x^{i+2},
\]
i.e., $\Delta(1) = 0$, $\Delta(x) = x \otimes x^2 - 1 \otimes x^3$, $\Delta(x^2) = x^2 \otimes x^2$, and $\Delta(x^3) = x^3 \otimes x^2$.  Its balanceator $B$ is symmetric (but not $= 0$).  This is Example 4.12.2 in \cite{aguiar2}.  If $\alpha \colon A \to A$ is any $\epsilon$-bialgebra morphism, then it follows from Theorem \ref{thm:Btp} that $B_\alpha$ is symmetric.  In this case, Corollary \ref{cor:homliebi} applies to the $\epsilon$-Hom-bialgebra $A_\alpha$ to yield a Hom-Lie bialgebra.

It is not hard to classify the unit-preserving $\epsilon$-bialgebra morphisms on $A$.   In fact, a unit-preserving algebra morphism $\alpha \colon A \to A$ is determined by $\alpha(x) = a_0 + a_1x + a_2x^2 + a_3x^3$.  By Proposition \ref{prop:morphism} such an $\alpha$ is an $\epsilon$-bialgebra morphism if and only if $\alpha^{\otimes 2}(\Delta(x)) = \Delta(\alpha(x))$.  One can solve this equation for the $a_i$ with an elementary but tedious computation. There are exactly two classes of solutions: Either $\alpha(x) = a_0$ or $\alpha(x) = \pm x + a_3x^3$ with $a_0, a_3 \in \bk$.
\qed
\end{example}

The next result is the Second Twisting Principle for $\epsilon$-Hom-bialgebras whose balanceators are symmetric, for which Corollary \ref{cor:homliebi} can be used to construct Hom-Lie bialgebras.

\begin{theorem}
\label{thm:Btp2}
Let $(A,\mu,\Delta,\alpha)$ be an $\epsilon$-Hom-bialgebra with balanceator $B$.  Then for each $n \geq 1$, the balanceator $B^n$ of $A^n$ is given by
\[
B^n = (\alpha^{2(2^n-1)})^{\otimes 2} \circ B,
\]
where $A^n = (A,\mun,\Deltan,\alpha^{2^n})$ is the $n$th derived $\epsilon$-Hom-bialgebra of $A$ with $\mun = \alpha^{2^n-1}\circ\mu$ and $\Deltan = \Delta \circ \alpha^{2^n-1}$ (Theorem \ref{thm:secondtp}).  In particular, if $B$ is symmetric, then so is $B^n$.  Conversely, if $B^n$ is symmetric for some $n$ and $\alpha^{\otimes 2}$ is injective, then $B$ is symmetric.
\end{theorem}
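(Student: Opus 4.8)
The plan is to mimic the proof of Theorem~\ref{thm:Btp}: I will compute the $\bullet$-bracket $[x,(\Deltan)^{op}(y)]_\bullet$ inside the $n$th derived $\epsilon$-Hom-bialgebra $A^n$, show that it equals $(\alpha^{2(2^n-1)})^{\otimes 2}$ applied to the corresponding bracket $[x,\Deltaop(y)]_\bullet$ computed in $A$, and then symmetrize. Fix $x,y\in A$ and write $\Delta(y)=\sum y_1\otimes y_2$. Since comultiplicativity \eqref{homcoassaxioms} gives $\Delta\circ\alpha=\alpha^{\otimes 2}\circ\Delta$, we have
\[
\Deltan(y)=\Delta(\alpha^{2^n-1}(y))=\sum\alpha^{2^n-1}(y_1)\otimes\alpha^{2^n-1}(y_2),
\qquad
(\Deltan)^{op}(y)=\sum\alpha^{2^n-1}(y_2)\otimes\alpha^{2^n-1}(y_1).
\]

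Next I would unwind the $\bullet$-action of $A^n$, which is built from $\mun=\alpha^{2^n-1}\circ\mu$ and the twisting map $\alpha^{2^n}$ (see \eqref{dotaction}), and repeatedly apply multiplicativity $\alpha\circ\mu=\mu\circ\alpha^{\otimes 2}$ to pull powers of $\alpha$ outside $\mu$. Writing $m=2^n-1$, one finds that $x$ acting on $\alpha^m(y_2)\otimes\alpha^m(y_1)$ from the left in $A^n$ gives $\alpha^{2m}\mu(\alpha(x),y_2)\otimes\alpha^{2m+1}(y_1)$, while acting from the right gives $\alpha^{2m+1}(y_2)\otimes\alpha^{2m}\mu(y_1,\alpha(x))$; since $2m=2(2^n-1)$, subtracting yields
\[
[x,(\Deltan)^{op}(y)]_\bullet=(\alpha^{2(2^n-1)})^{\otimes 2}\bigl([x,\Deltaop(y)]_\bullet\bigr),
\]
the bracket on the right being computed in $A$. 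The one place that demands care is this bookkeeping of $\alpha$-exponents --- checking that the extra powers of $\alpha$ produced by the $\bullet$-action in $A^n$ agree with those produced by applying $(\alpha^{2(2^n-1)})^{\otimes 2}$ to the bracket in $A$ --- but it is entirely routine given multiplicativity, and is the main (modest) obstacle.

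Finally, from the definition \eqref{balanceator} of the balanceator of $A^n$ and the fact that $\alpha^{\otimes 2}$ commutes with $\tau$, one obtains
\[
B^n(x,y)=[x,(\Deltan)^{op}(y)]_\bullet+\tau\bigl([y,(\Deltan)^{op}(x)]_\bullet\bigr)=(\alpha^{2(2^n-1)})^{\otimes 2}\bigl(B(x,y)\bigr),
\]
which is the asserted formula. The last two statements then follow immediately: if $B$ is symmetric, applying $(\alpha^{2(2^n-1)})^{\otimes 2}$ to $B(x,y)=B(y,x)$ shows $B^n$ is symmetric; conversely, if $B^n$ is symmetric then $(\alpha^{2(2^n-1)})^{\otimes 2}(B(x,y)-B(y,x))=0$, and since $\alpha^{\otimes 2}$ injective forces $\alpha$, hence $\alpha^{2(2^n-1)}$, hence $(\alpha^{2(2^n-1)})^{\otimes 2}$, to be injective, we conclude $B(x,y)=B(y,x)$. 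Alternatively, since $A^{n+1}=(A^n)^1$, one may reduce to the case $n=1$ by induction exactly as in Theorem~\ref{thm:secondtp}; the direct computation above is only marginally longer.
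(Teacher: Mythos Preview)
Your proposal is correct and takes essentially the same approach as the paper. The only organizational difference is that the paper first establishes the case $n=1$ explicitly and then inducts via $A^{n}=(A^{n-1})^1$, whereas you carry out the direct computation for general $n$ (with $m=2^n-1$) and mention the inductive reduction as an alternative; the underlying bookkeeping of $\alpha$-exponents is identical.
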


\begin{proof}
Consider the case $n=1$.  For $y \in A$, write $\Delta(y) = \sum y_1 \otimes y_2$.  In $A^1$ we have:
\[
\begin{split}
[x,\Deltaoneop(y)]_\bullet
&= x \bullet (\alpha(y_2) \otimes \alpha(y_1)) - (\alpha(y_2) \otimes \alpha(y_1)) \bullet x\\
&= \muone(\alpha^2(x),\alpha(y_2)) \otimes \alpha^3(y_1) - \alpha^3(y_2) \otimes \muone(\alpha(y_1),\alpha^2(x))\\
&= (\alpha^2)^{\otimes 2}(\mu(\alpha(x),y_2) \otimes \alpha(y_1) - \alpha(y_2) \otimes \mu(y_1,\alpha(x))).
\end{split}
\]
Therefore, in $A^1$ we have
\begin{equation}
\label{B1}
\begin{split}
B^1(x,y) &= [x,\Deltaoneop(y)]_\bullet + \tau[y,\Deltaoneop(x)]_\bullet\\
&= (\alpha^2)^{\otimes 2}(\mu(\alpha(x),y_2) \otimes \alpha(y_1) - \alpha(y_2) \otimes \mu(y_1,\alpha(x)))\\
&\relphantom{} + (\alpha^2)^{\otimes 2}\circ\tau(\mu(\alpha(y),x_2) \otimes \alpha(x_1) - \alpha(x_2) \otimes \mu(x_1,\alpha(y)))\\
&= (\alpha^2)^{\otimes 2}(B(x,y)),
\end{split}
\end{equation}
as desired.  For the general case, we proceed inductively, noting that $A^n = (A^{n-1})^1$ and using the case just proved repeatedly.  We have:
\[
\begin{split}
B^n &= ((\alpha^{2^{n-1}})^2)^{\otimes 2} \circ B^{n-1}\quad\text{(by \eqref{B1} applied to $A^{n-1}$)}\\
&= (\alpha^{2^n})^{\otimes 2} \circ (\alpha^{2^n - 2})^{\otimes 2} \circ B \quad \text{(by induction hypothesis)}\\
&= (\alpha^{2^{n+1} - 2})^{\otimes 2} \circ B,
\end{split}
\]
as was to be shown.  The last two assertions follow immediately from the first one, which we just established.
\end{proof}

Combining Corollary \ref{cor:homliebi} and Theorem \ref{thm:Btp2}, we obtain a sequence of Hom-Lie bialgebras from any $\epsilon$-Hom-bialgebra with symmetric balanceator.

\begin{corollary}
\label{cor:anhomliebi}
Let $(A,\mu,\Delta,\alpha)$ be an $\epsilon$-Hom-bialgebra whose balanceator $B$ is symmetric.  Then
\[
(A^n)_{HLie} = (A,\mun\circ(Id - \tau), (Id-\tau)\circ\Deltan,\alpha^{2^n})
\]
is a Hom-Lie bialgebra for each $n \geq 1$, where $\mun = \alpha^{2^n-1}\circ\mu$ and $\Deltan = \Delta \circ \alpha^{2^n-1}$.
\end{corollary}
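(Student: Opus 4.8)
The plan is to obtain this as a formal consequence of the Second Twisting Principle (Theorem \ref{thm:secondtp}), the computation of the balanceator of a derived $\epsilon$-Hom-bialgebra (Theorem \ref{thm:Btp2}), and the criterion for passing to a Hom-Lie bialgebra (Corollary \ref{cor:homliebi}). First I would recall from Theorem \ref{thm:secondtp} that for each $n \geq 1$ the $n$th derived object $A^n = (A,\mun,\Deltan,\alpha^{2^n})$ is itself an $\epsilon$-Hom-bialgebra, so in particular its balanceator $B^n \colon A^{\otimes 2} \to A^{\otimes 2}$ is defined via \eqref{balanceator}, and Corollary \ref{cor:homliebi} is applicable to $A^n$.

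Next I would invoke Theorem \ref{thm:Btp2}, which identifies $B^n = (\alpha^{2(2^n-1)})^{\otimes 2} \circ B$. Since $B$ is assumed symmetric, i.e. $B \circ \tau = B$, and since the twist map $\tau$ commutes with any tensor power of $\alpha$, it follows that $B^n \circ \tau = (\alpha^{2(2^n-1)})^{\otimes 2} \circ B \circ \tau = (\alpha^{2(2^n-1)})^{\otimes 2} \circ B = B^n$, so $B^n$ is symmetric as well. This implication is in fact already recorded in the statement of Theorem \ref{thm:Btp2}, so in the write-up one may simply quote it rather than rederive it. Finally I would apply Corollary \ref{cor:homliebi} to the $\epsilon$-Hom-bialgebra $A^n$: because $B^n$ is symmetric, the quadruple consisting of $A$, the commutator bracket $\mun \circ (Id - \tau)$ of $\mun$, the cocommutator cobracket $(Id - \tau) \circ \Deltan$ of $\Deltan$, and the twisting map $\alpha^{2^n}$ is a Hom-Lie bialgebra. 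This is precisely $(A^n)_{HLie}$ as stated.

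I do not expect any genuine obstacle here, since all the content has been front-loaded into Theorems \ref{thm:secondtp} and \ref{thm:Btp2} and Corollary \ref{cor:homliebi}; the proof is essentially a bookkeeping composition of these three results. The only two points worth an explicit sentence are the compatibility of $\tau$ with powers of $\alpha$ used to propagate symmetry of $B$ to symmetry of $B^n$, and the observation that the (co)commutator (co)bracket attached to $A^n$ by Corollary \ref{cor:homliebi} is literally $\mun \circ (Id - \tau)$ and $(Id - \tau) \circ \Deltan$, which is immediate from the definitions of $\mun$ and $\Deltan$ in Theorem \ref{thm:secondtp}.
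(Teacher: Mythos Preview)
Your proposal is correct and follows exactly the paper's approach: the paper states the corollary immediately after Theorem \ref{thm:Btp2} with the one-line justification ``Combining Corollary \ref{cor:homliebi} and Theorem \ref{thm:Btp2},'' which is precisely the composition you spell out. Your additional remarks about $\tau$ commuting with tensor powers of $\alpha$ and about the (co)commutator (co)bracket being literally $\mun\circ(Id-\tau)$ and $(Id-\tau)\circ\Deltan$ are accurate but more detail than the paper provides.
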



Now we restrict to the subclass of coboundary $\epsilon$-Hom-bialgebra (Definition \ref{def:cobound}).  The corresponding Hom-Lie objects are called coboundary Hom-Lie bialgebras.

\begin{definition}
\label{def:coboundhomlie}
A \textbf{coboundary Hom-Lie bialgebra} $(L,[-,-],\delta,\alpha,r)$ consists of a Hom-Lie bialgebra $(L,[-,-],\delta,\alpha)$ (Definition \ref{def:homlie}) and an $\alpha$-invariant $r \in L^{\otimes 2}$ such that $\delta = \ad(r)$ \eqref{eq:ad}.
\end{definition}

Coboundary Hom-Lie bialgebras were introduced and studied in \cite{yau8}.  They are the Hom-type analogs of Drinfel'd's coboundary Lie bialgebras \cite{dri83,dri87}.  As explained in \cite{yau8}, the operators $\ad(r)$ are, in fact, $1$-coboundaries in Hom-Lie algebra cohomology \cite[section 5]{ms3} with coefficients in the tensor-square.  This justifies the terminology.

A $2$-tensor $r \in A^{\otimes 2}$ is said to be \textbf{anti-symmetric} if $r^{op} = -r$, where $r^{op} = \tau(r)$.  The following result gives a sufficient condition under which a coboundary $\epsilon$-Hom-bialgebra gives rise to a coboundary Hom-Lie bialgebra via the (co)commutator (co)bracket.

\begin{theorem}
\label{thm:coboundhomlie}
Let $(A,\mu,\Delta=[-,r]_*,\alpha,r)$ be a coboundary $\epsilon$-Hom-bialgebra with $r$ anti-symmetric.  Then
\[
A_{HLie} = (A,[-,-]=\mu\circ(Id-\tau),\delta=(Id-\tau)\circ\Delta,\alpha,r)
\]
is a coboundary Hom-Lie bialgebra.
\end{theorem}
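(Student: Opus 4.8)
The plan is to verify, one at a time, the four conditions that make $A_{HLie}$ a coboundary Hom-Lie bialgebra in the sense of Definition \ref{def:coboundhomlie}: (i) $(A,[-,-],\alpha)$ is a Hom-Lie algebra; (ii) $(A,\delta,\alpha)$ is a Hom-Lie coalgebra; (iii) the cocycle compatibility \eqref{eq:compatibility} holds; and (iv) $\delta = \ad(r)$. Conditions (i) and (ii) require no computation: the commutator bracket of the Hom-associative algebra $(A,\mu,\alpha)$ is a Hom-Lie bracket \cite{ms}, and dually the cocommutator cobracket of the Hom-coassociative coalgebra $(A,\Delta,\alpha)$ is a Hom-Lie cobracket \cite{ms2}; and the $\alpha$-invariance of $r$ demanded by Definition \ref{def:coboundhomlie} is part of the hypothesis, since $(A,\mu,\Delta=[-,r]_*,\alpha,r)$ is assumed to be a coboundary $\epsilon$-Hom-bialgebra. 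So the substance is in (iii) and (iv).

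For (iii), I would invoke Corollary \ref{cor:homliebi}, which (via Theorem \ref{thm:homliebi}, where the failure of \eqref{eq:compatibility} is seen to be exactly the skew part $B(x,y)-B(y,x)$ of the balanceator) says that $A_{HLie}$ is a Hom-Lie bialgebra precisely when the balanceator $B$ of $(A,\mu,\Delta=[-,r]_*,\alpha)$ is symmetric. I would then prove the stronger statement that $B \equiv 0$ when $r$ is anti-symmetric; this is Lemma \ref{lem1:coboundhomlie}. The computation runs as follows: writing $r = \sum u_i \otimes v_i$, we have $\Delta(y) = \sum yu_i \otimes \alpha(v_i) - \alpha(u_i) \otimes v_i y$, hence $\Deltaop(y) = \sum \alpha(v_i) \otimes yu_i - v_i y \otimes \alpha(u_i)$; substituting this into the defining formula \eqref{balanceator} for $B(x,y)$ and expanding the two $\bullet$-brackets using the $\bullet$-action \eqref{dotaction} and Hom-associativity \eqref{homassaxioms} produces a sum of eight monomials, which pair off and cancel once the anti-symmetry of $r$ is applied in the packaged form $(f\otimes g)(r) = -(f\otimes g)(\tau(r))$ for linear maps $f,g \colon A \to A$. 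Organizing this eight-term cancellation cleanly is the main — and essentially the only — obstacle in the proof; it uses no idea beyond careful bookkeeping.

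For (iv), I would compute $\delta = (Id - \tau)\circ[-,r]_*$ directly. For $a \in A$,
\[
(Id-\tau)([a,r]_*) = \sum_i au_i \otimes \alpha(v_i) - \alpha(u_i) \otimes v_i a - \alpha(v_i) \otimes au_i + v_i a \otimes \alpha(u_i).
\]
Rewriting the last two summands via the anti-symmetry of $r$, namely $\sum_i \alpha(v_i) \otimes au_i = -\sum_i \alpha(u_i) \otimes av_i$ and $\sum_i v_i a \otimes \alpha(u_i) = -\sum_i u_i a \otimes \alpha(v_i)$, the expression collapses to $\sum_i [a,u_i] \otimes \alpha(v_i) + \alpha(u_i) \otimes [a,v_i]$, which by \eqref{eq:ad} is exactly $\ad_a(r) = \ad(r)(a)$. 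Hence $\delta = \ad(r)$, and together with (i)–(iii) this shows that $A_{HLie}$ is a coboundary Hom-Lie bialgebra.
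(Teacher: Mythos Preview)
Your proposal is correct and follows essentially the same route as the paper: it reduces the Hom-Lie bialgebra claim to Corollary \ref{cor:homliebi} via the vanishing of the balanceator (Lemma \ref{lem1:coboundhomlie}), and then checks $\delta=\ad(r)$ by the same direct four-term expansion using the anti-symmetry of $r$. The only difference is cosmetic: you spell out items (i) and (ii) explicitly, whereas the paper leaves them implicit in the invocation of Corollary \ref{cor:homliebi}.
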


\begin{proof}
In Lemma \ref{lem1:coboundhomlie} below, we will show that the balanceator $B$ of $A$ is $0$, which is trivially symmetric.  Thus Corollary \ref{cor:homliebi} implies that $A_{HLie}$ is a Hom-Lie bialgebra.  It remains to show that $\delta = \ad(r)$.  Writing $r = \sum u_i \otimes v_i$, for $a \in A$ we compute as follows:
\[
\begin{split}
\delta(a) &= (Id - \tau)(\Delta(a))\\
&= (Id - \tau)(au_i \otimes \alpha(v_i) - \alpha(u_i) \otimes v_ia)\quad \text{(by \eqref{starbracketa})}\\
&= au_i \otimes \alpha(v_i) - \alpha(u_i) \otimes v_ia - \alpha(v_i) \otimes au_i + v_ia \otimes \alpha(u_i)\\
&= au_i \otimes \alpha(v_i) - u_ia \otimes \alpha(v_i) + \alpha(u_i) \otimes av_i - \alpha(u_i) \otimes v_ia\\
&= [a,u_i] \otimes \alpha(v_i) + \alpha(u_i) \otimes [a,v_i]\\
&= \ad_a(r).
\end{split}
\]
In the fourth equality above, we used the anti-symmetry of $r$ twice.  To finish the proof of the Theorem, it remains to prove the following Lemma.
\end{proof}

\begin{lemma}
\label{lem1:coboundhomlie}
With the hypotheses of Theorem \ref{thm:coboundhomlie}, the balanceator $B$ of $A$ is $0$.
\end{lemma}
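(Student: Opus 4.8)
The plan is to expand $B(x,y)$ directly from its definition \eqref{balanceator}, using $\Delta = [-,r]_*$, and to show that every term cancels in pairs once the anti-symmetry $\tau(r) = -r$ and Hom-associativity are invoked. Write $r = \sum u_i \otimes v_i$. By \eqref{starbracketa}, $\Delta(y) = \sum yu_i \otimes \alpha(v_i) - \alpha(u_i) \otimes v_iy$, so $\Deltaop(y) = \tau(\Delta(y)) = \sum \alpha(v_i) \otimes yu_i - v_iy \otimes \alpha(u_i)$.

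First I would compute, using the definition \eqref{dotbracket} of $[-,-]_\bullet$ together with the multiplicativity of $\alpha$,
\begin{multline*}
[x,\Deltaop(y)]_\bullet = \alpha(x)\alpha(v_i) \otimes \alpha(y)\alpha(u_i) - \alpha^2(v_i) \otimes (yu_i)\alpha(x)\\
- \alpha(x)(v_iy) \otimes \alpha^2(u_i) + \alpha(v_i)\alpha(y) \otimes \alpha(u_i)\alpha(x),
\end{multline*}
and, by interchanging $x$ and $y$ and then applying $\tau$, the corresponding expansion of $\tau([y,\Deltaop(x)]_\bullet)$. Adding the two yields eight terms, which I would sort into four pairs.

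The two ``outer'' pairs vanish by anti-symmetry of $r$: applying a map of the form $f \otimes g$ to the identity $\sum_i v_i \otimes u_i = -\sum_i u_i \otimes v_i$ sends each such term to the negative of its partner; for instance $\sum_i \alpha(x)\alpha(v_i) \otimes \alpha(y)\alpha(u_i) = -\sum_i \alpha(x)\alpha(u_i) \otimes \alpha(y)\alpha(v_i)$. For each of the two remaining ``inner'' pairs, I would first use anti-symmetry to interchange the labels $u_i$ and $v_i$ (which introduces a sign), and then apply Hom-associativity in the form $(ab)\alpha(c) = \alpha(a)(bc)$; for example, $-\alpha^2(v_i) \otimes (yu_i)\alpha(x)$ becomes $\alpha^2(u_i) \otimes (yv_i)\alpha(x) = \alpha^2(u_i) \otimes \alpha(y)(v_ix)$, which is precisely the negative of the term $-\alpha^2(u_i) \otimes \alpha(y)(v_ix)$ occurring in $\tau([y,\Deltaop(x)]_\bullet)$; the other inner pair cancels by the symmetric manipulation. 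Hence all eight terms cancel and $B(x,y) = 0$.

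The argument is essentially bookkeeping. The only point requiring care is keeping track of which tensor slot each $u_i$ or $v_i$ occupies when anti-symmetry is applied, and choosing the right pair of factors on which to apply Hom-associativity so that the inner terms line up with opposite signs; there is no substantive obstacle beyond this.
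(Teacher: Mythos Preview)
Your proposal is correct and follows essentially the same approach as the paper's own proof: both expand $B(x,y)$ from the explicit form of $\Delta = [-,r]_*$ into eight terms and then cancel them in four pairs, using the anti-symmetry $\tau(r)=-r$, multiplicativity of $\alpha$, and Hom-associativity $(ab)\alpha(c)=\alpha(a)(bc)$ exactly as you describe. The only cosmetic difference is that the paper writes out all eight terms twice (before and after applying anti-symmetry/Hom-associativity), whereas you group them conceptually into ``outer'' pairs (cancelled by anti-symmetry alone) and ``inner'' pairs (requiring Hom-associativity in addition); the pairing and the justifications coincide.
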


\begin{proof}
Using the axioms \eqref{homassaxioms} in a Hom-associative algebra and $r^{op} = -r$, for $x,y \in A$ we have:
\[
\begin{split}
B(x,y) &= [x,\Deltaop(y)]_\bullet + \tau[y,\Deltaop(x)]_\bullet\\
&= [x,\alpha(v_i) \otimes yu_i - v_iy \otimes \alpha(u_i)]_\bullet + \tau[y,\alpha(v_i) \otimes xu_i - v_ix \otimes \alpha(u_i)]_\bullet\\
&= \alpha(x)\alpha(v_i) \otimes \alpha(yu_i) - \alpha(x)(v_iy) \otimes \alpha^2(u_i)\\
&\relphantom{} - \alpha^2(v_i) \otimes (yu_i)\alpha(x) + \alpha(v_iy) \otimes \alpha(u_i)\alpha(x)\\
&\relphantom{} + \alpha(xu_i) \otimes \alpha(y)\alpha(v_i) - \alpha^2(u_i) \otimes \alpha(y)(v_ix)\\
&\relphantom{} - (xu_i)\alpha(y) \otimes \alpha^2(v_i) + \alpha(u_i)\alpha(y) \otimes \alpha(v_ix)\\
&= -\alpha(xu_i) \otimes \alpha(yv_i) + \alpha(x)(u_iy) \otimes \alpha^2(v_i)\\
&\relphantom{} + \alpha^2(u_i) \otimes (yv_i)\alpha(x) - \alpha(u_iy) \otimes \alpha(v_ix)\\
&\relphantom{} + \alpha(xu_i) \otimes \alpha(yv_i) - \alpha^2(u_i) \otimes (yv_i)\alpha(x)\\
&\relphantom{} - \alpha(x)(u_iy) \otimes \alpha^2(v_i) + \alpha(u_iy) \otimes \alpha(v_ix)\\
&= 0.
\end{split}
\]
This proves the Lemma and hence also Theorem \ref{thm:coboundhomlie}.
\end{proof}

\section{Quasi-triangular $\epsilon$-Hom-bialgebras and Hom-Lie bialgebras}
\label{sec:qt}

In the previous section, we discussed the transition from $\epsilon$-Hom-bialgebras to Hom-Lie bialgebras.  In this section, we study the transition from quasi-triangular $\epsilon$-Hom-bialgebras (Definition \ref{def:ahybe}) to their corresponding Hom-Lie objects.  First we need to discuss the transition from solutions of the AHYBE \eqref{ahybe} to solutions of the CHYBE.  The relationships between quasi-triangular $\epsilon$-bialgebras and Lie bialgebras and between the AYBE and the CYBE were studied by Aguiar \cite{aguiar2}.  The results in this section are Hom-type analogs of some of those in \cite{aguiar2} (section 3).

Let us first recall some relevant definitions from \cite{yau8}.

\begin{definition}
\label{def:chybe}
Let $(L,[-,-],\alpha)$ be a Hom-Lie algebra (Definition \ref{def:homlie}) and $r = \sum u_i \otimes v_i, s = \sum u_j' \otimes v_j' \in L^{\otimes 2}$.  Define the following $3$-tensors:
\begin{equation}
\label{cr}
\begin{split}
[r_{12},s_{13}] &= \sum [u_i,u_j'] \otimes \alpha(v_i) \otimes \alpha(v_j'),\\
[r_{12},s_{23}] &= \sum \alpha(u_i) \otimes [v_i,u_j'] \otimes \alpha(v_j'),\\
[r_{13},s_{23}]&= \sum \alpha(u_i) \otimes \alpha(u_j') \otimes [v_i,v_j'],\\
C(r) &= [r_{12},r_{13}] + [r_{12},r_{23}] + [r_{13},r_{23}].
\end{split}
\end{equation}
We say that $r$ is a solution of the \textbf{classical Hom-Yang-Baxter equation} (CHYBE) in $L$ if
\begin{equation}
\label{chybe}
C(r) = 0.
\end{equation}
A \textbf{quasi-triangular Hom-Lie bialgebra} is a coboundary Hom-Lie bialgebra (Definition \ref{def:coboundhomlie}) $(L,[-,-],\delta,\alpha,r)$ in which $r$ is a solution of the CHYBE.
\end{definition}

In the special case $\alpha = Id$ (i.e., $L$ is a Lie algebra), the CHYBE reduces to the \textbf{classical Yang-Baxter equation} (CYBE) \cite{skl1,skl2}.  A quasi-triangular Hom-Lie bialgebra with $\alpha = Id$ is a \textbf{quasi-triangular Lie bialgebra} in the sense of Drinfel'd \cite{dri83,dri87}.  The CHYBE and quasi-triangular Hom-Lie bialgebras were introduced and studied in \cite{yau8}.

Observe the formal similarity between the AHYBE \eqref{ahybe} and the CHYBE \eqref{chybe}.  There is, in fact, a close relationship between them, as we now describe.  Recall that a $2$-tensor $r \in A^{\otimes 2}$ is anti-symmetric if $r^{op} = -r$, where $r^{op} = \tau(r)$.  It is said to be \textbf{symmetric} if $r^{op} = r$.  Also recall that, if $(A,\mu,\alpha)$ is a Hom-associative algebra, then $A_{HLie} = (A,[-,-],\alpha)$ is a Hom-Lie algebra \cite{ms} (Proposition 1.7), where $[-,-] = \mu\circ(Id - \tau)$.

\begin{theorem}
\label{thm:chybe}
Let $(A,\mu,\alpha)$ be a Hom-associative algebra and $r \in A^{\otimes 2}$ be a solution of the AHYBE \eqref{ahybe}.  Suppose that $r$ is either symmetric or anti-symmetric.  Then $r$ is a solution of the CHYBE \eqref{chybe} in the Hom-Lie algebra $A_{HLie} = (A,[-,-] = \mu\circ(Id - \tau),\alpha)$.
\end{theorem}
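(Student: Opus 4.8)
The plan is to compute $C(r)$ directly in the commutator Hom-Lie algebra $A_{HLie}$ and to reduce it to the AHYBE element $A(r)$, which vanishes by hypothesis. First I would expand each of the three brackets appearing in $C(r)$ by $[a,b]=ab-ba$; with the Hom-twisted triple-product conventions of \eqref{ar} and \eqref{cr} this gives
\[
[r_{12},r_{13}]=r_{12}r_{13}-r_{13}r_{12},\qquad [r_{12},r_{23}]=r_{12}r_{23}-r_{23}r_{12},\qquad [r_{13},r_{23}]=r_{13}r_{23}-r_{23}r_{13},
\]
so that, after collecting terms and using $A(r)=r_{13}r_{12}-r_{12}r_{23}+r_{23}r_{13}$,
\[
C(r)=\bigl(r_{12}r_{13}-r_{23}r_{12}+r_{13}r_{23}\bigr)-\bigl(r_{13}r_{12}-r_{12}r_{23}+r_{23}r_{13}\bigr)=\overline{A}(r)-A(r),
\]
where $\overline{A}(r):=r_{12}r_{13}-r_{23}r_{12}+r_{13}r_{23}$ is the element obtained from $A(r)$ by reversing the order of every binary product, i.e. $\overline{A}(r)$ is the AHYBE element of $r$ computed in the opposite Hom-associative algebra $(A,\muop,\alpha)$. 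Since $A(r)=0$ by assumption, it remains only to show that $\overline{A}(r)=0$.

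For that step I would use the symmetry or anti-symmetry of $r$. Writing $r^{op}=\varepsilon r$ with $\varepsilon=\pm1$, the claim is that $\overline{A}(r)=\tau_{13}(A(r))$, where $\tau_{13}\colon A^{\otimes 3}\to A^{\otimes 3}$ interchanges the first and third tensor factors. To prove this one applies $\tau_{13}$ to $A(r)$, then in each of the three resulting summands uses $r^{op}=\varepsilon r$ to interchange the two tensor factors of each of the two copies of $r$ occurring there, and finally relabels the two summation indices; one checks that after these moves the twisting maps $\alpha$ land in exactly the legs dictated by \eqref{ar}, so that the three summands become precisely $r_{12}r_{13}$, $-r_{23}r_{12}$, $r_{13}r_{23}$. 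The key point is that each summand of $A(r)$ involves two copies of $r$, so interchanging the factors of both introduces the scalar $\varepsilon^{2}=1$, independently of the sign of $\varepsilon$; this is exactly why both the symmetric and the anti-symmetric case go through (and why no mixed hypothesis is available or needed). Hence $\overline{A}(r)=\tau_{13}(A(r))=\tau_{13}(0)=0$, and therefore $C(r)=\overline{A}(r)-A(r)=0$, which is the assertion; the special case $\alpha=Id$ recovers Aguiar's statement relating the AYBE to the CYBE \cite{aguiar2}.

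The argument is short in outline, and I expect the only real difficulty to be bookkeeping rather than substance: one must keep careful track of which of the three tensor legs carries the $\mu$-product and which carry the twisting map $\alpha$ (the Hom conventions in \eqref{ar} and \eqref{cr}), of the relabelings of the two independent summation copies of $r$, and of the $\pm1$ produced by each use of $r^{op}=\varepsilon r$. The safeguard throughout is that an even number of such sign-flips occurs at every stage, so the signs never obstruct the identification $\overline{A}(r)=\tau_{13}(A(r))$.
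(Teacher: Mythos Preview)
Your proposal is correct and follows essentially the same route as the paper. The paper isolates the same two lemmas: first that $C(r)=A(r)'-A(r)$ with $A(r)'=r_{12}r_{13}-r_{23}r_{12}+r_{13}r_{23}$ (your $\overline{A}(r)$), and second that $A(r)'=\pi(A(r))$ for the transposition $\pi=(1\,3)$ (your $\tau_{13}$) when $r$ is symmetric or anti-symmetric; your remark that $\overline{A}(r)$ is the AHYBE element for $r$ in the opposite Hom-associative algebra is a nice conceptual gloss the paper does not make explicit.
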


To prove Theorem \ref{thm:chybe}, we consider the following variation of the $3$-tensor $A(r)$.  With $r = \sum u_i \otimes v_i \in A^{\otimes 2}$, define the $3$-tensor
\begin{equation}
\label{Ar'}
A(r)' = u_ju_i \otimes \alpha(v_j) \otimes \alpha(v_i) - \alpha(u_i) \otimes u_jv_i \otimes \alpha(v_j) + \alpha(u_i) \otimes \alpha(u_j) \otimes v_iv_j.
\end{equation}
Note that, following the conventions in \eqref{ar}, we can identify $A(r)'$ as $r_{12}r_{13} - r_{23}r_{12} + r_{13}r_{23}$.  The following two Lemmas are needed in the proof of Theorem \ref{thm:chybe}.

\begin{lemma}
\label{lem1:chybe}
Let $(A,\mu,\alpha)$ be a Hom-associative algebra and $r \in A^{\otimes 2}$.  Then
\[
C(r) = A(r)' - A(r),
\]
where $C(r)$ \eqref{cr} is defined in the Hom-Lie algebra $A_{HLie} = (A,[-,-] = \mu\circ(Id - \tau),\alpha)$.
\end{lemma}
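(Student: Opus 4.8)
The plan is to prove the identity $C(r) = A(r)' - A(r)$ by direct expansion of both sides, using only the definitions of the relevant $3$-tensors and the anti-symmetry of the commutator bracket. First I would write $r = \sum u_i \otimes v_i$ (with the summation sign suppressed, as in the paper's convention) and recall the three building-block $3$-tensors $[r_{12},r_{13}]$, $[r_{12},r_{23}]$, $[r_{13},r_{23}]$ from \eqref{cr}, as well as the six building-block $3$-tensors $r_{13}r_{12}$, $r_{12}r_{23}$, $r_{23}r_{13}$ (appearing in $A(r)$, see \eqref{ar}) and $r_{12}r_{13}$, $r_{23}r_{12}$, $r_{13}r_{23}$ (appearing in $A(r)'$, see \eqref{Ar'}).

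Next I would expand each bracket $[-,-] = \mu \circ (Id - \tau)$ inside the definition of $C(r)$. For instance, using $[u_i,u_j'] = u_iu_j' - u_j'u_i$ with $s = r$ (so $u_j' = u_j$, $v_j' = v_j$), we get
\[
[r_{12},r_{13}] = (u_iu_j)\otimes\alpha(v_i)\otimes\alpha(v_j) - (u_ju_i)\otimes\alpha(v_i)\otimes\alpha(v_j).
\]
After relabeling the dummy indices $i \leftrightarrow j$ in the first term (which turns $(u_iu_j)\otimes\alpha(v_i)\otimes\alpha(v_j)$ into $(u_ju_i)\otimes\alpha(v_j)\otimes\alpha(v_i) = r_{13}r_{12}$ in the notation of \eqref{ar}), one recognizes the first term as $r_{13}r_{12}$ and the second as $r_{12}r_{13}$ in the sense of \eqref{ar} and \eqref{Ar'}. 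Treating the other two brackets $[r_{12},r_{23}]$ and $[r_{13},r_{23}]$ the same way, and being careful about which index relabelings are needed to match the notational conventions, each of the six resulting terms should land exactly on one of the six $3$-tensors occurring in $A(r)'$ or $A(r)$, with the signs working out so that $C(r) = A(r)' - A(r)$.

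The main obstacle — really the only delicate point — is bookkeeping: making sure that when a term coming from one of the commutators is rewritten by swapping the roles of the two tensor factors of $r$ (i.e. relabeling $u_i \otimes v_i$ versus $u_j \otimes v_j$), it is matched against the correctly-indexed term in $A(r)$ or $A(r)'$, and that the $\alpha$'s sit in the right slots. The Hom-associativity axiom is not needed here (it will enter later, in Lemma \ref{lem1:coboundhomlie}-style arguments or in the proof of Theorem \ref{thm:chybe}); only multiplicativity of $\alpha$ is implicitly used to keep the $\alpha$-decorated factors consistent, and in fact even that can be avoided by simply reading off the definitions. So the proof is a short, careful term-by-term comparison, which I would present by listing the three expanded commutators and then collecting terms.
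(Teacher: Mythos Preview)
Your approach is essentially the same as the paper's: expand each of the three commutators in $C(r)$ using $[a,b]=ab-ba$ and match the resulting six terms against the six $3$-tensors appearing in $A(r)$ and $A(r)'$. The paper does this by computing $-C(r)$ directly, but the substance is identical. One small slip in your illustrative example: after relabeling, $(u_ju_i)\otimes\alpha(v_j)\otimes\alpha(v_i)$ is $r_{12}r_{13}$ (the first term of $A(r)'$), not $r_{13}r_{12}$; you have the two identifications reversed, which if propagated would flip the overall sign. This is exactly the bookkeeping hazard you flagged, so just double-check the labels when you write it out.
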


\begin{proof}
We compute as follows:
\[
\begin{split}
-C(r) &= -[r_{12},r_{13}] - [r_{12},r_{23}] - [r_{13},r_{23}]\\
&= -[u_j,u_i] \otimes \alpha(v_j) \otimes \alpha(v_i) - \alpha(u_i) \otimes [v_i,u_j] \otimes \alpha(v_j) - \alpha(u_i) \otimes \alpha(u_j) \otimes [v_i,v_j]\\
&= u_iu_j \otimes \alpha(v_j) \otimes \alpha(v_i) - u_ju_i \otimes \alpha(v_j) \otimes \alpha(v_i) - \alpha(u_i) \otimes v_iu_j \otimes \alpha(v_j)\\
&\relphantom{} + \alpha(u_i) \otimes u_jv_i \otimes \alpha(v_j) + \alpha(u_i) \otimes \alpha(u_j) \otimes v_jv_i - \alpha(u_i) \otimes \alpha(u_j) \otimes v_iv_j\\
&= A(r) - A(r)'.
\end{split}
\]
In the last equality above, $A(r)$ consists of the first, the third, and the fifth terms, and $-A(r)'$ consists of the second, the fourth, and the sixth terms.
\end{proof}

\begin{lemma}
\label{lem2:chybe}
Let $(A,\mu,\alpha)$ be a Hom-associative algebra and $r \in A^{\otimes 2}$ that is either symmetric or anti-symmetric.  Then $A(r)' = \pi(A(r))$, where $\pi$ denotes the transposition $(1~3)$ on $A^{\otimes 3}$.
\end{lemma}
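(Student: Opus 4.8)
The plan is to expand $A(r)$ and $A(r)'$ explicitly in terms of $r=\sum_i u_i\otimes v_i$ and then match $\pi(A(r))$ against $A(r)'$ one term at a time, the symmetry (or anti-symmetry) of $r$ being precisely what makes the two sides agree. Unwinding \eqref{ar} with $s=r$ gives
\[
A(r) = \sum_{i,j}\left( u_iu_j\otimes\alpha(v_j)\otimes\alpha(v_i) - \alpha(u_i)\otimes v_iu_j\otimes\alpha(v_j) + \alpha(u_j)\otimes\alpha(u_i)\otimes v_iv_j\right),
\]
while \eqref{Ar'} reads
\[
A(r)' = \sum_{i,j}\left( u_ju_i\otimes\alpha(v_j)\otimes\alpha(v_i) - \alpha(u_i)\otimes u_jv_i\otimes\alpha(v_j) + \alpha(u_i)\otimes\alpha(u_j)\otimes v_iv_j\right).
\]

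The one structural input is this: each of these $3$-tensors is the image of $r\otimes r\in A^{\otimes 4}$ under a fixed linear map $A^{\otimes 4}\to A^{\otimes 3}$ (an appropriate combination of $\mu$'s, $\alpha$'s, and permutations of the four slots). Since $r^{op}=\pm r$, we have $\tau(r)\otimes\tau(r) = (\pm 1)^2\,(r\otimes r) = r\otimes r$, i.e. $r\otimes r$ is unchanged when the two entries coming from each copy of $r$ are interchanged. Consequently, in every summand above one may simultaneously replace $u_i$ by $v_i$, $v_i$ by $u_i$, $u_j$ by $v_j$, and $v_j$ by $u_j$ without changing the value. This is exactly where the hypothesis enters; the two possible signs disappear because each expression is quadratic in $r$, so the argument is uniform in the symmetric and anti-symmetric cases.

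Now I apply $\pi$ to $A(r)$ term by term, using $\pi(a\otimes b\otimes c)=c\otimes b\otimes a$, then perform the double swap just described and relabel the dummy indices $i\leftrightarrow j$ where needed. A short check shows that $\pi$ of the first term of $A(r)$ becomes the third term of $A(r)'$, $\pi$ of the second term becomes the second term of $A(r)'$, and $\pi$ of the third term becomes the first term of $A(r)'$; since the middle term carries a minus sign in both $A(r)$ and $A(r)'$, adding the three contributions yields $\pi(A(r))=A(r)'$, as claimed.

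There is no genuine difficulty here — the argument is pure bookkeeping — and the only thing requiring care is tracking, after applying $\pi$, which tensor slot each of the indices $i$ and $j$ occupies, so that the swap $u\leftrightarrow v$ and the relabeling $i\leftrightarrow j$ are carried out consistently in each term; rephrasing everything at the level of the defining maps $A^{\otimes 4}\to A^{\otimes 3}$, as above, makes the role of $\tau(r)\otimes\tau(r)=r\otimes r$ transparent and avoids index manipulation altogether. Together with Lemma \ref{lem1:chybe}, which gives $C(r)=A(r)'-A(r)$, this lemma then finishes Theorem \ref{thm:chybe}: if $A(r)=0$ then also $A(r)'=\pi(A(r))=0$, whence $C(r)=0$.
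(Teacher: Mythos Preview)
Your proof is correct and follows essentially the same route as the paper: apply $\pi$ to each of the three summands of $A(r)$, then use that $(\pm 1)^2=1$ so the simultaneous swap $u\leftrightarrow v$ in both copies of $r$ leaves everything unchanged, and relabel $i\leftrightarrow j$ as needed. Your framing via $\tau(r)\otimes\tau(r)=r\otimes r$ is a clean way to justify why the symmetric and anti-symmetric cases are handled uniformly, which the paper leaves implicit.
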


\begin{proof}
We compute as follows:
\[
\begin{split}
\pi(A(r)) &= \pi(r_{23}r_{13} - r_{12}r_{23} + r_{13}r_{12})\\
&= v_jv_i \otimes \alpha(u_j) \otimes \alpha(u_i) - \alpha(v_j) \otimes v_iu_j \otimes \alpha(u_i) + \alpha(v_i) \otimes \alpha(v_j) \otimes u_iu_j\\
&= u_ju_i \otimes \alpha(v_j) \otimes \alpha(v_i) - \alpha(u_i) \otimes u_jv_i \otimes \alpha(v_j) + \alpha(u_i) \otimes \alpha(u_j) \otimes v_iv_j\\
&= A(r)'.
\end{split}
\]
In the third equality above, we used the (anti-)symmetry of $r$ in each of its six copies.
\end{proof}

\begin{proof}[Proof of Theorem \ref{thm:chybe}]
We have
\[
\begin{split}
C(r) &= A(r)' - A(r) \quad \text{(by Lemma \ref{lem1:chybe})}\\
&= (\pi - Id)(A(r)) \quad \text{(by Lemma \ref{lem2:chybe})}\\
&= 0,
\end{split}
\]
since by assumption $A(r) = 0$.
\end{proof}


Let us give some examples of (anti-)symmetric solutions of the AHYBE, for which Theorem \ref{thm:chybe} can be applied to yield solutions of the CHYBE.

\begin{example}
Let $(A,\mu)$ be an associative algebra, $\alpha \colon A \to A$ be an algebra morphism, and $r \in A^{\otimes 2}$ be a solution of the AYBE that is (anti-)symmetric.  Then for each $n \geq 0$, $(\alpha^{\otimes 2})^n(r)$ is a solution of the AHYBE in the Hom-associative algebra $A_\alpha = (A,\mualpha = \alpha\circ\mu,\alpha)$ that is (anti-)symmetric.  This is an immediate consequence of Theorem \ref{thm:ahybe1}.\qed
\end{example}

\begin{example}
Let $(A,\mu,\alpha)$ be a Hom-associative algebra and $r \in A^{\otimes 2}$ be a solution of the AHYBE that is (anti-)symmetric.  Then for each $n \geq 0$, $r$ is also a solution of the AHYBE in the Hom-associative algebra $A^n = (A,\mun = \alpha^{2^n-1} \circ \mu,\alpha^{2^n})$ that is (anti-)symmetric.  This is a special case of Theorem \ref{thm:ahybe2}.\qed
\end{example}

\begin{example}
Let $M_2$ denote the associative algebra of $2$-by-$2$ complex matrices, and let $\alpha \colon M_2 \to M_2$ be an algebra morphism.  For example, if $\gamma \in M_2$ is invertible, then $\alpha = \gamma(-)\gamma^{-1}$ (i.e., conjugation by $\gamma$) is an algebra morphism.  By \cite{aguiar2} (Example 2.8), up to conjugation and transposition, the only non-zero anti-symmetric solution of the AYBE in $M_2$ is
\[
r = \eoneone \otimes \eonetwo - \eonetwo \otimes \eoneone.
\]
It follows from Theorem \ref{thm:ahybe1} that, for each $n \geq 0$, $(\alpha^{\otimes 2})^n(r)$ is an anti-symmetric solution of the AHYBE in the Hom-associative algebra $(M_2)_\alpha = (M_2,\mualpha =\alpha\circ\mu,\alpha)$.\qed
\end{example}

\begin{example}
\label{ex:2d}
Consider the two-dimensional associative algebra $A = \bk\langle x\rangle \oplus \bk\langle y \rangle$ whose multiplication is determined by
\begin{equation}
\label{2dmult}
x^2 = 0 = xy,\quad yx = x, \quad\text{and}\quad y^2 = y.
\end{equation}
By \cite{aguiar2} (Example 2.3.1) there is an anti-symmetric solution of the AYBE in $A$ given by
\[
r = y \otimes x - x \otimes y.
\]
If $\alpha \colon A \to A$ is any algebra morphism, then by Theorem \ref{thm:ahybe1} $(\alpha^{\otimes 2})^n(r)$ is an anti-symmetric solution of the AHYBE in the Hom-associative algebra $A_\alpha = (A,\mualpha=\alpha\circ\mu,\alpha)$ for each $n \geq 0$.  Let us classify the non-zero algebra morphisms on $A$ and describe their corresponding anti-symmetric solutions of the AHYBE in $A_\alpha$.

A non-zero algebra morphism $\alpha \colon A \to A$ is determined by
\[
\alpha(x) = ax + by \quad\text{and}\quad \alpha(y) = cx + dy
\]
for some $a,b,c,d \in \bk$.  Solving the simultaneously equations
\[
\alpha(x)^2 = 0 = \alpha(x)\alpha(y),\quad
\alpha(y)\alpha(x) = \alpha(x),\quad\text{and}\quad
\alpha(y)^2 = \alpha(y)
\]
for $a,b,c$, and $d$, we conclude that the only non-zero algebra morphisms on $A$ are given by:
\begin{equation}
\label{alphaxy}
\alpha(x) = ax, \quad\alpha(y) = cx + y
\end{equation}
for arbitrary $a,c \in \bk$.  In the Hom-associative algebra $A_\alpha$, the multiplication $\mualpha = \alpha\circ\mu$ is determined by
\begin{equation}
\label{mualphaxy}
\mualpha(x,x) = 0 = \mualpha(x,y),\quad
\mualpha(y,x) = ax,\quad\text{and}\quad
\mualpha(y,y) = cx + y.
\end{equation}

If $a=0$ in $\alpha$ \eqref{alphaxy}, then $\alpha(x) = 0$ and hence $(\alpha^{\otimes 2})^n(r) = 0$ for all $n \geq 1$.  In this case, $r$ is an anti-symmetric solution of the AHYBE in $A_\alpha$.

Now assume that $a\not= 0$.  To describe $(\alpha^{\otimes 2})^n(r)$, first note that
\[
\alpha^n(x) = a^nx.
\]
We claim that
\[
\alpha^n(y) = c\left(\sum_{j=0}^{n-1} a^j\right)x + y
\]
for all $n \geq 1$, where $a^0 \equiv 1$.  The case $n=1$ is true by \eqref{alphaxy}.  Inductively, we have:
\[
\begin{split}
\alpha^{n+1}(y) &= \alpha(\alpha^n(y))\\
&= c(1 + a + \cdots + a^{n-1})\alpha(x) + \alpha(y) \quad(\text{by induction hypothesis})\\
&= c(1 + a + \cdots + a^{n-1})ax + cx + y\\
&= c(1 + a + \cdots + a^n)x + y.
\end{split}
\]
This proves the claim.  Going back to $(\alpha^{\otimes 2})^n(r)$, for $n \geq 1$ we have:
\[
\begin{split}
(\alpha^{\otimes 2})^n(r)
&= \alpha^n(y) \otimes \alpha^n(x) - \alpha^n(x) \otimes \alpha^n(y)\\
&= (c(1 + a + \cdots + a^{n-1})x + y) \otimes a^nx - a^nx \otimes (c(1 + a + \cdots + a^{n-1})x + y)\\
&= a^n(y \otimes x - x \otimes y)\\
&= a^nr.
\end{split}
\]
Therefore, when $a\not= 0$ in $\alpha$ \eqref{alphaxy}, $(\alpha^{\otimes 2})^n(r) = a^nr$ is an anti-symmetric solution of the AHYBE in $A_\alpha$ for each $n \geq 0$.
\qed
\end{example}


Finally, we consider the transition from quasi-triangular $\epsilon$-Hom-bialgebras (Definition \ref{def:ahybe}) to quasi-triangular Hom-Lie bialgebras (Definition \ref{def:chybe}).

\begin{corollary}
\label{cor:qthomlie}
Let $(A,\mu,\Delta,\alpha,r)$ be a quasi-triangular $\epsilon$-Hom-bialgebra with $r$ anti-symmetric.  Then
\[
A_{HLie} = (A,[-,-]=\mu\circ(Id-\tau),\delta=(Id-\tau)\circ\Delta,\alpha,r)
\]
is a quasi-triangular Hom-Lie bialgebra.
\end{corollary}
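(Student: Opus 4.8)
The plan is to observe that both defining properties of a quasi-triangular Hom-Lie bialgebra (Definition \ref{def:chybe})---namely being a coboundary Hom-Lie bialgebra and having $r$ solve the CHYBE---are delivered directly by results already proved in this paper, so the corollary is a formal combination of Theorems \ref{thm:coboundhomlie} and \ref{thm:chybe}. First I would unpack the hypothesis: by Definition \ref{def:ahybe}, a quasi-triangular $\epsilon$-Hom-bialgebra $(A,\mu,\Delta,\alpha,r)$ is a coboundary $\epsilon$-Hom-bialgebra with $\Delta=[-,r]_*$ in which $r$ solves the AHYBE, i.e., $A(r)=0$; and by the corollary's hypothesis $r$ is anti-symmetric.

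The first step is to apply Theorem \ref{thm:coboundhomlie} to the coboundary $\epsilon$-Hom-bialgebra $(A,\mu,\Delta=[-,r]_*,\alpha,r)$ with $r$ anti-symmetric. This yields that $A_{HLie}=(A,[-,-]=\mu\circ(Id-\tau),\delta=(Id-\tau)\circ\Delta,\alpha,r)$ is a coboundary Hom-Lie bialgebra, in particular a Hom-Lie bialgebra whose cobracket satisfies $\delta=\ad(r)$. The second step is to apply Theorem \ref{thm:chybe} to the Hom-associative algebra $(A,\mu,\alpha)$ and the solution $r$ of the AHYBE: since $r$ is anti-symmetric, Theorem \ref{thm:chybe} gives that $r$ is a solution of the CHYBE in the commutator Hom-Lie algebra $(A,[-,-]=\mu\circ(Id-\tau),\alpha)$, i.e., $C(r)=0$. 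Combining these two conclusions---noting that the Hom-Lie bracket $[-,-]$ and cobracket $\delta$ produced by the two theorems are literally the same maps---shows that $A_{HLie}$ is a coboundary Hom-Lie bialgebra in which $r$ solves the CHYBE, which is exactly the assertion that it is a quasi-triangular Hom-Lie bialgebra.

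I do not expect any genuine obstacle here: the entire content of the proof is bookkeeping, checking that the hypotheses of Theorems \ref{thm:coboundhomlie} and \ref{thm:chybe} are met (coboundary structure with anti-symmetric $r$, and $A(r)=0$, all of which are immediate from the definition of ``quasi-triangular'') and observing that the two constructions are compatible because they attach the same commutator bracket and cocommutator cobracket to $A$. The only point worth spelling out for the reader is this last compatibility, so that the two separately proved conclusions may be read as statements about one and the same quadruple $(A,[-,-],\delta,\alpha)$.
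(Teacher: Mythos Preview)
Your proposal is correct and follows exactly the same approach as the paper: invoke Theorem \ref{thm:coboundhomlie} to obtain the coboundary Hom-Lie bialgebra structure on $A_{HLie}$, then invoke Theorem \ref{thm:chybe} (using the anti-symmetry of $r$ and $A(r)=0$) to conclude that $r$ solves the CHYBE. The paper's proof is slightly terser, but the logical content is identical.
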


\begin{proof}
By Theorem \ref{thm:coboundhomlie} we already know that $A_{HLie}$ is a coboundary Hom-Lie bialgebra.  Moreover, by assumption $r$ is an anti-symmetric solution of the AHYBE in $A$.  It follows from Theorem \ref{thm:chybe} that $r$ is a solution of the CHYBE in $A_{HLie}$.
\end{proof}

\begin{example}
Consider the $2$-dimensional Hom-associative algebra $A_\alpha = (A,\mualpha=\alpha\circ\mu,\alpha)$ in Example \ref{ex:2d}, where $A = \bk\langle x\rangle \oplus \bk\langle y \rangle$ with multiplication \eqref{2dmult}.  In this Example, we take $a = 1$ in $\alpha$ \eqref{alphaxy}, so
\[
\alpha(x) = x, \quad \alpha(y) = cx + y
\]
with $c \in \bk$ arbitrary.  It follows that $r = y \otimes x - x \otimes y$ is $\alpha$-invariant, i.e., $\alpha^{\otimes 2}(r) = r$.  Thus, $r$ is an $\alpha$-invariant anti-symmetric solution of the AHYBE in $A_\alpha$ (Theorem \ref{thm:ahybe1}).  By Corollary \ref{cor:qtchar},
\[
A_\alpha = (A,\mualpha=\alpha\circ\mu,\Delta=[-,r]_*,\alpha,r)
\]
is a quasi-triangular $\epsilon$-Hom-bialgebra with $r$ anti-symmetric.  Therefore, Corollary \ref{cor:qthomlie} can be applied to $A_\alpha$ to yield a quasi-triangular Hom-Lie bialgebra
\[
(A_\alpha)_{HLie} = (A,[-,-]=\alpha\circ\mu\circ(Id-\tau),\delta=(Id-\tau)\circ[-,r]_*,\alpha,r)
\]
via the (co)commutator (co)bracket.

More explicitly, using \eqref{starbracketa} and \eqref{mualphaxy}, the comultiplication $\Delta = [-,r]_*$ in $A_\alpha$ is determined by
\[
\begin{split}
\Delta(x) &= [x,y \otimes x - x \otimes y]_*\\
&= \mualpha(x,y) \otimes \alpha(x) - \mualpha(x,x) \otimes \alpha(y) - \alpha(y) \otimes \mualpha(x,x) + \alpha(x) \otimes \mualpha(y,x)\\
&= x \otimes x
\end{split}
\]
and
\[
\begin{split}
\Delta(y) &= [y,y \otimes x - x \otimes y]_*\\
&= \mualpha(y,y) \otimes \alpha(x) - \mualpha(y,x) \otimes \alpha(y) - \alpha(y) \otimes \mualpha(x,y) + \alpha(x) \otimes \mualpha(y,y)\\
&= (cx + y) \otimes x - x \otimes (cx + y) + x \otimes (cx + y)\\
&= cx \otimes x + y \otimes x
\end{split}
\]
Therefore, the cocommutator cobracket $\delta$ in $(A_\alpha)_{HLie}$ is determined by
\[
\begin{split}
\delta(x) &= (Id - \tau)(\Delta(x))\\
&= (Id - \tau)(x \otimes x)\\
&= 0
\end{split}
\]
and
\[
\begin{split}
\delta(y) &= (Id - \tau)(\Delta(y))\\
&= (Id - \tau)(cx \otimes x + y \otimes x)\\
&= y \otimes x - x \otimes y\\
&= r.
\end{split}
\]
Finally, by \eqref{mualphaxy}, the commutator bracket $[-,-]$ in $(A_\alpha)_{HLie}$ is determined by
\[
[x,y] = \mualpha(x,y) - \mualpha(y,x) = -x.
\]
\qed
\end{example}


\end{document}